\DeclareSymbolFont{AMSb}{U}{msb}{m}{n}
\DeclareMathSymbol{\Z}{\mathbin}{AMSb}{"5A}
\DeclareMathSymbol{\R}{\mathbin}{AMSb}{"52}
\DeclareMathSymbol{\N}{\mathbin}{AMSb}{"4E}
\DeclareMathSymbol{\Q}{\mathbin}{AMSb}{"51}
\DeclareMathOperator{\conv}{conv}
\DeclareMathOperator{\cone}{cone}
\DeclareMathOperator{\spanOp}{span}
\DeclareMathOperator{\zono}{zono}
\newcommand{\A}{\mathbf{a}}
\newcommand{\B}{\mathbf{b}}
\newcommand{\V}{\mathbf{v}}
\newcommand{\U}{\mathbf{u}}
\newcommand{\w}{\mathbf{w}}
\newcommand{\x}{\mathbf{x}}
\newcommand{\y}{\mathbf{y}}
\newcommand{\z}{\mathbf{z}}
\def\vec#1{\mathchoice{\mbox{\boldmath$\displaystyle\bf#1$}}
{\mbox{\boldmath$\textstyle\bf#1$}}
{\mbox{\boldmath$\scriptstyle\bf#1$}}
{\mbox{\boldmath$\scriptscriptstyle\bf#1$}}}
\providecommand{\abs}[1]{\lvert#1\rvert}
\providecommand{\floor}[1]{\left\lfloor#1\right\rfloor}
\newtheorem{thm}{Theorem}[section]
\newtheorem{lem}[thm]{Lemma}
\newtheorem{cor}[thm]{Corollary}
\newtheorem{prop}[thm]{Proposition}
\newtheorem{ass}[thm]{Assumption}
\theoremstyle{definition}
\newtheorem{definition}[thm]{Definition}
\theoremstyle{remark}
\newtheorem{remark}[thm]{Remark}
\theoremstyle{remark}
\newtheorem{example}[thm]{Example}
\theoremstyle{remark}
\theoremstyle{remark}
\theoremstyle{remark}
\theoremstyle{remark}
\begin{document}

\begin{frontmatter}[classification=text]

\title{Parametric Presburger Arithmetic:\\Logic, Combinatorics, and\\Quasi-polynomial Behavior} 

\author[tbog]{Tristram Bogart\thanks{Partially supported by the Fondo de Apoyo a Profesores Asistentes (FAPA) of the Universidad de los Andes, Colombia}}
\author[jgoo]{John Goodrick\thanks{Partially supported by the Fondo de Apoyo a Profesores Asistentes (FAPA) of the Universidad de los Andes, Colombia}}
\author[kwoo]{Kevin Woods}

\begin{abstract}
Parametric Presburger arithmetic concerns families of sets $S_t\subseteq\Z^d$, for $t\in\N$, that are defined using addition, inequalities, constants in $\Z$, Boolean operations, multiplication by $t$, and quantifiers on variables ranging over $\Z$. That is, such families are defined using quantifiers and Boolean combinations of formulas of the form $\vec a(t)\cdot \x \le b(t),$ where $\vec a(t)\in\Z[t]^d, b(t)\in\Z[t]$. A function $g:\N\rightarrow\Z$ is a quasi-polynomial if there exists a period $m$ and polynomials $f_0,\ldots,f_{m-1}\in\Q[t]$ such that $g(t)=f_i(t),\text{ for }t\equiv i\bmod m.$

  Recent results of Chen, Li, Sam; Calegari, Walker; Roune, Woods; and Shen concern specific families in parametric Presburger arithmetic that exhibit quasi-polynomial behavior. For example, $\abs{S_t}$ might be a quasi-polynomial function of $t$ or an element $\x(t)\in S_t$ might be specifiable as a function with quasi-polynomial coordinates, for sufficiently large $t$. Woods conjectured that all parametric Presburger sets exhibit this quasi-polynomial behavior. Here, we prove this conjecture, using various tools from logic and combinatorics.\end{abstract}
\end{frontmatter}

\section{Introduction}

We examine a broad class of problems that exhibit \emph{quasi-polynomial} behavior.

\begin{definition}
\label{def:QP}
A function $g:\N\rightarrow\Z$ is a \emph{quasi-polynomial} if there exists a period $m$ and polynomials $f_0,\ldots,f_{m-1}\in\Q[t]$ such that
\[g(t)=f_i(t),\text{ for }t\equiv i\bmod m.\]
A function $g:\N\rightarrow\Z$ is an \emph{eventual quasi-polynomial}, abbreviated \emph{EQP}, if it agrees with a quasi-polynomial for sufficiently large $t$. (In this paper, we take $\N=\{0,1,2,\ldots\}$.)
\end{definition}

\begin{example}
\label{ex:floor}
\[g(t)=\floor{\frac{t+1}{2}}=\begin{cases} \frac{t}{2} & \text{if $t$ even},\\ \frac{t+1}{2} & \text{if $t$ odd},\end{cases}\]
is a quasi-polynomial with period 2.
\end{example}

In \cite{woods1}, Woods noticed that several recent results concern different kinds of EQP behavior in combinatorially defined families. For example, given a family of sets $S_t\subseteq\Z^d$ for $t\in\N$, $\abs{S_t}$ might be an EQP function of $t$ or an element $\x(t)\in S_t$ might be specifiable as a function with EQP coordinates. These results are all concerned with the following types of sets \cite{woods1}:

\begin{definition}
Given $d \in \N$, a \emph{parametric Presburger family} is a collection $\{S_t : t \in \N\}$ of subsets of $\Z^d$ which can be defined by a formula using addition, inequalities, multiplication and addition by constants from $\Z$, Boolean operations (and, or, not), multiplication by $t$, and quantifiers ($\forall$, $\exists$) on variables ranging over $\Z$. That is, such families are defined using quantifiers and Boolean combinations of formulas of the form $\vec a(t)\cdot \x \le b(t),$ where $\vec a(t)\in\Z[t]^d, b(t)\in\Z[t]$.
\end{definition}

In Section \ref{sub:examples}, we give a number of examples of parametric Presburger families, and discuss their previously known EQP behavior.  Note that, for fixed $t$, a formula $\vec a(t)\cdot \x \le b(t)$ is simply a linear inequality; as $t$ changes, the half-space that this linear inequality defines both shifts (as $b(t)$ changes) and rotates (as $\vec a(t)$ changes). It is important to emphasize that we do not allow quantifiers $\exists t$ or $\forall t$ applied to the parameter $t$.

In \cite{woods1}, Woods conjectured that these parametric Presburger families always have EQP behavior. In this paper, we prove this conjecture. After seeing several examples in Section \ref{sub:examples}, we state this theorem precisely in Section \ref{sub:statement}.

\subsection{Examples}
\label{sub:examples}

\begin{example}
\label{Ex:Ehrhart} Let $S_{t}$ be the set of integer points, $\x\in\Z^d$, in a parametric polyhedron defined by a conjunction of linear inequalities of the form $\vec a \cdot \vec x\le tb$, where $\vec a\in\Z^d$ and $b\in\Z$. That is, $S_t=tP\cap\Z^d$ for some rational polyhedron $P\subseteq\R^d$.
\end{example}

Ehrhart proved \cite{ehrhart} that $\abs{S_t}$ (if finite) is a quasi-polynomial, with a period given by the smallest $m$ such that $mP$ has integer vertices. This is the classic quasi-polynomial result; see the book \cite{BR07} by Beck and Robins for a proof and many examples of its utility. As a concrete example:

\begin{example}
\label{ex:triangle}
Let $P$ be the triangle with vertices $(0,0)$, $\left(\frac{1}{2},0\right)$, and $\left(\frac{1}{2},\frac{1}{2}\right)$. Then
\[g(t)=\#(tP\cap\Z^2)=\frac{\big(\floor{t/2}+1\big)\big(\floor{t/2}+2\big)}{2}=\begin{cases} (t+2)(t+4)/8&\text{if $t$ even,}\\ (t+1)(t+3)/8 & \text{if $t$ odd,}\end{cases}\]
is a quasi-polynomial with period 2.
\end{example}

More recently, Chen, Li, and Sam proved \cite{ChLiSam} that $\abs{S_t}$ is still an EQP, even if the normal vectors in the linear inequalities are allowed to vary with $t$, that is, if they are of the form $\vec a(t)\cdot \x \le b(t),$ where $\vec a(t)\in\Z[t]^d, b(t)\in\Z[t]$.

For a concrete example:

\begin{example}
\label{ex:twist}
Let $P_t$ be the ``twisting square'' in Figure 1a, defined by
\[P_t=\big\{(x,y)\in\mathbb R^2:\  \abs{2x+(2t-2)y}\le t^2-2t+2,\ \abs{(2-2t)x+2y}\le t^2-2t+2\big\}.\]
Then $\abs{P_t\cap\Z^2}$ is given by the quasi-polynomial
\[\abs{P_t\cap\Z^2}=\begin{cases}t^2-2t+2 &\text{if $t$ odd,}\\t^2-2t+5 &\text{if $t$ even}. \end{cases}
\]
\end{example}

\begin{figure}
\centering
\caption{The twisting square $P_t$ from Example \ref{ex:twist}.}
\begin{subfigure}{.4\textwidth}
\centering
\includegraphics[width=.9\textwidth]{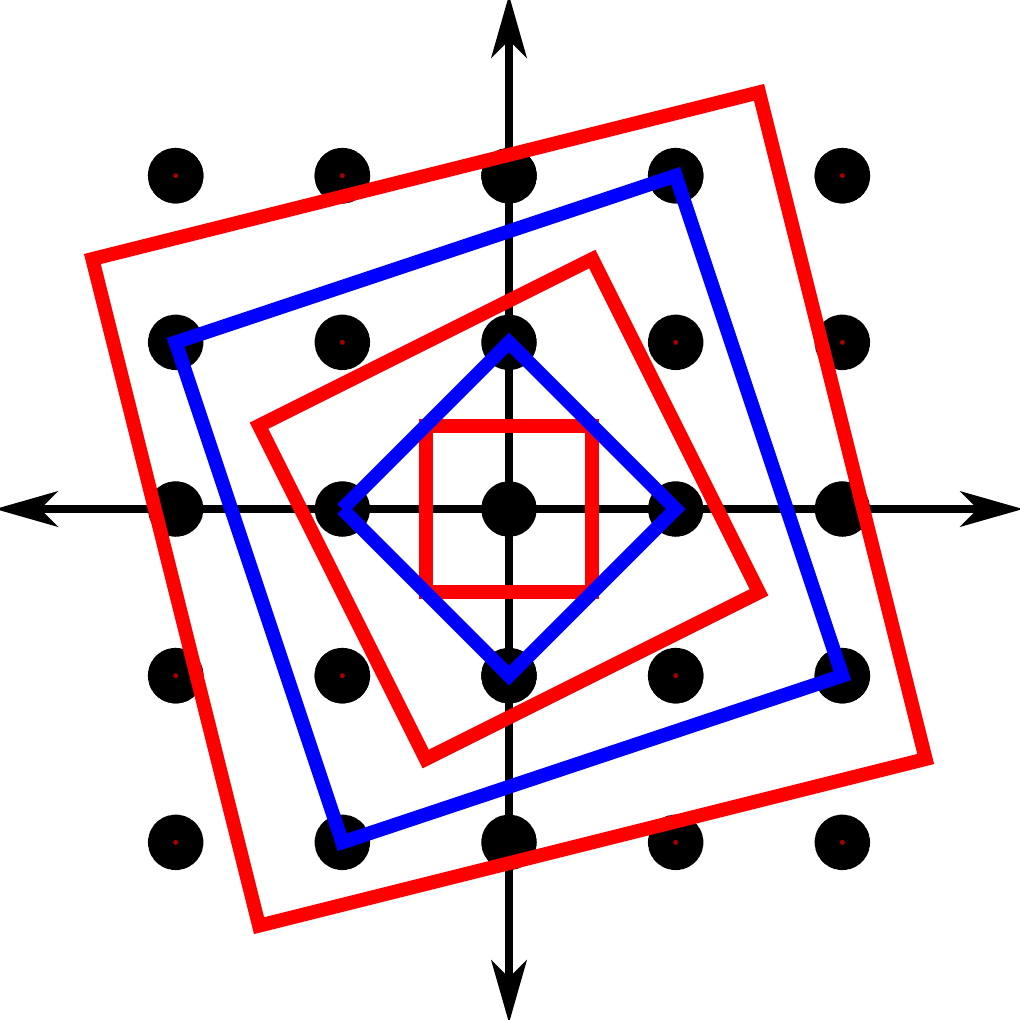}\caption{$P_t$ for $t=1,\ldots,5$.}\label{fig:twist}
\end{subfigure}
\begin{subfigure}{.4\textwidth}
\centering
\includegraphics[width=.9\textwidth]{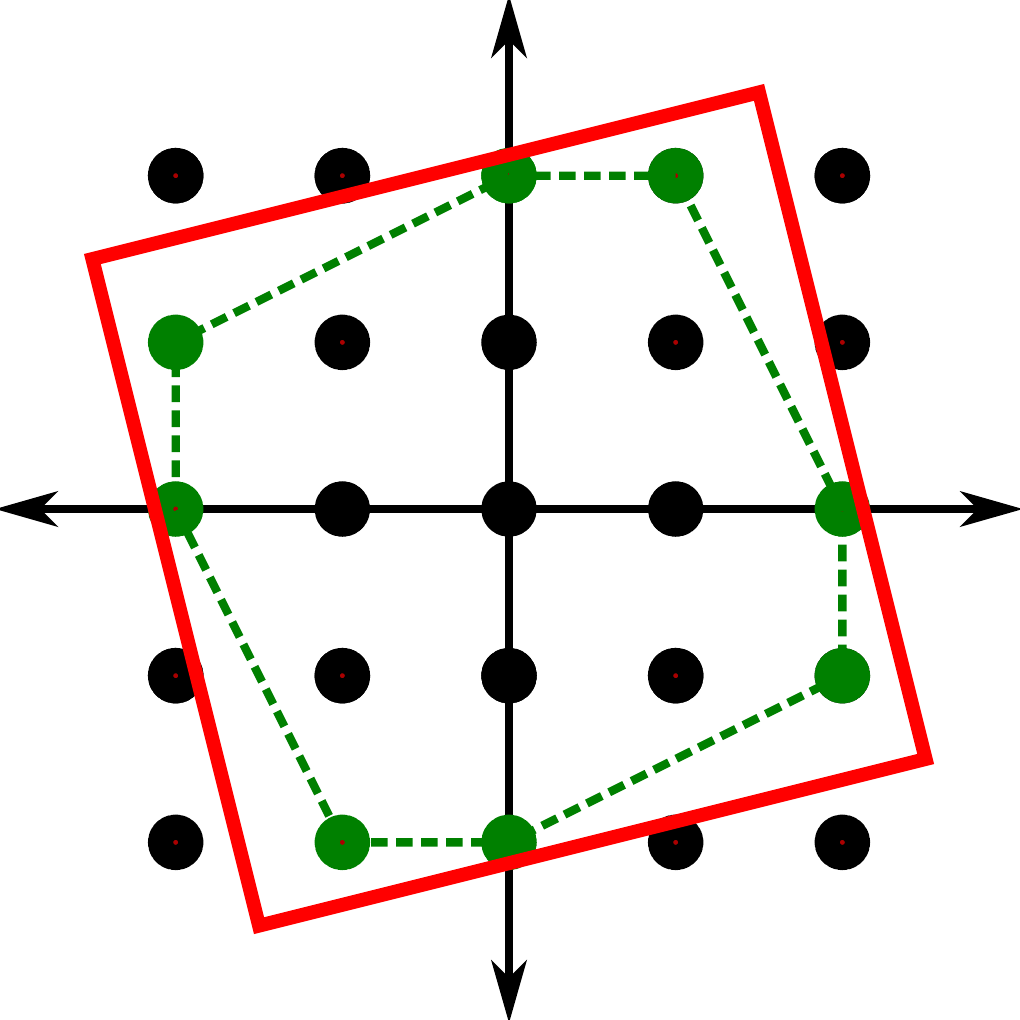}\caption{Integer hull of $P_5$.}\label{fig:twist2}
\end{subfigure}
\end{figure}

Calegari and Walker were similarly concerned \cite{CW} with the integer points  in a polyhedron, $P_t$, defined by linear inequalities of the form $\vec a(t)\cdot\x\le b(t)$. Rather than counting $\abs{P_t\cap\Z^d}$, they wanted to find the vertices of the integer hull of $P_t$, that is, the vertices of the convex hull of $P_t\cap\Z^d$.

\begin{example}
Consider the twisting square, $P_t$, from Example \ref{ex:twist}. When $t$ is even, the vertices of $P_t$ are integers, so the vertices of the integer hull are simply the vertices of $P_t$:
\[\left(\pm\frac{t-2}{2},\pm\frac{t}{2}\right)\quad\text{and}\quad \left(\pm\frac{t}{2},\mp\frac{t-2}{2}\right).\]
When $t$ is odd, the integer hull of $P_t$ is an octagon (pictured in Figure 1b for $t=5$) with vertices
\[\left(0,\pm\frac{t-1}{2}\right),\left(\pm\frac{t-3}{2},\pm\frac{t-1}{2}\right),\left(\pm\frac{t-1}{2},0\right),\left(\pm\frac{t-1}{2},\mp\frac{t-3}{2}\right).\]
\end{example}

Abstracting from the last example, the following turns out to be true (first proved by Calegari and Walker \cite{CW} under an additional hypothesis on $P_t$, later proved in general by Shen \cite{Shen15b}): 

\begin{thm}
\label{integer_hulls}
Suppose that $P_t \subseteq \R^d$ is a family of polyhedra defined by a finite conjunction of linear inequalities $\vec a(t)\cdot\x\le b(t)$ where $\vec a \in \Z[t]^d$, $b \in \Z[t]$, and that every $P_t$ is bounded. Then there exists a modulus $m$ and functions $\vec p_{ij}:\N \rightarrow\Z^d$ with polynomial coordinates such that, for $0\le i<m$ and for sufficiently large $t\equiv i \bmod m$, the set of vertices of the integer hull of $P_t \cap \Z^d$ is $\{\vec p_{i1}(t),\vec p_{i2}(t),\ldots,\vec p_{ik_i}(t)\}$.
\end{thm}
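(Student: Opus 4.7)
The strategy is to deduce Theorem \ref{integer_hulls} from the quasi-polynomial structure theorem for parametric Presburger families (the main result of this paper). The obstacle to a one-line deduction is that the set of vertices of $\conv(P_t\cap\Z^d)$ is not obviously parametric-Presburger-definable: the natural characterization ``$\vec v$ is the unique maximizer of $\vec c\cdot\x$ over $P_t\cap\Z^d$ for some $\vec c\in\Z^d$'' involves the product $\vec c\cdot\x$ of two quantified variables, which is outside the parametric Presburger fragment. The workaround is to handle one cost direction at a time, treating the cost vector as a constant rather than a variable.

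For each \emph{fixed} $\vec c\in\Z^d$, the linear form $\vec c\cdot\x$ has constant coefficients, so the assignment $\vec x_{\vec c}:t\mapsto$ (lexicographically largest integer point of $P_t$ attaining the maximum of $\vec c\cdot\x$) is parametric-Presburger-definable by a nested sequence of maximizations using only the fixed entries of $\vec c$ and the polynomial coefficients of the inequalities defining $P_t$. Applying the main structure theorem to $\vec x_{\vec c}$ then yields a modulus $m_{\vec c}$ and polynomial-coordinate functions agreeing with $\vec x_{\vec c}(t)$ for large $t$ in each residue class modulo $m_{\vec c}$. Since every vertex of $\conv(P_t\cap\Z^d)$ is the unique maximizer of some linear functional (choosing $\vec c$ in the interior of the vertex's normal cone makes lex tie-breaking irrelevant), the full vertex set is $\{\vec x_{\vec c}(t):\vec c\in\Z^d\}$; it remains to cut this down to a finite list.

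The main obstacle is showing that a single finite list $\vec c_1,\dots,\vec c_{k_i}$ suffices to enumerate all vertices on each residue class -- equivalently, that the combinatorial type (normal fan) of $\conv(P_t\cap\Z^d)$ stabilizes modulo some period $m$. I would attack this by bounding the number of distinct vertex-producing cost directions using Hartmann-type complexity estimates (the vertex count of $\conv(P_t\cap\Z^d)$ grows only polylogarithmically in $t$ for fixed facet number and polynomial data), and then applying the structure theorem to the parametric Presburger family of pairs $(\vec c,\vec v)$ where $\vec c$ ranges over a fixed bounded window of primitive integer directions and $\vec v$ is the associated unique maximizer. The quasi-polynomial rigidity enforced by the main theorem should force the list of active $\vec c$'s to be eventually constant within each residue class. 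Taking $m$ to be a common multiple of the moduli $m_{\vec c_j}$ for the stabilized list, the vertex set for $t\equiv i\pmod m$ sufficiently large is then $\{\vec x_{\vec c_j}(t):1\le j\le k_i\}=\{\vec p_{ij}(t)\}$, with each $\vec p_{ij}$ of polynomial coordinates, completing the proof.
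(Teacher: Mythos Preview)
Your approach diverges from the paper's at exactly the point you flag as the main obstacle, and the proposed resolution does not close the gap. The paper re-derives Theorem~\ref{integer_hulls} from the main theorem by running the \emph{beneath-and-beyond} convex-hull algorithm: each step maximizes a linear functional over $P_t\cap\Z^d$ (Property~3a gives an EQP point), adds it to the current hull, and updates; crucially, the paper imports as a black box the \emph{uniform} (in $t$) bound on the number of vertices of the integer hull proved by Calegari--Walker \cite{CW}, which guarantees the algorithm terminates in a bounded number of steps independent of $t$. That uniform bound is what makes the whole argument finite.

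Your plan tries to avoid this, but the ingredients you cite do not suffice. A Hartmann-type estimate gives only polylogarithmic growth of the vertex count in $t$, not a uniform bound; if the vertex number genuinely increased along any residue class, no fixed finite list $\vec c_1,\dots,\vec c_{k_i}$ could cover all vertices for large $t$, and ``quasi-polynomial rigidity'' cannot rule this out, since the main theorem applies set-by-set and says nothing about how many distinct EQP vertex functions are needed. Likewise, restricting $\vec c$ to a ``fixed bounded window of primitive integer directions'' is unjustified: the normal cones of the integer hull can become arbitrarily thin as $t$ grows, so there is no a priori reason a bounded window contains a representative of each. In short, the finiteness you need is precisely the content of the Calegari--Walker bound; once you grant that, either your per-direction argument or the paper's beneath-and-beyond argument finishes the job, but without it neither approach terminates.
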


Another recent EQP result involves the Frobenius number:

\begin{definition}
Given $a_1,\ldots, a_d\in\N$, let $S$ be the semigroup generated by the $a_i$, that is,
\[S=\{a\in\N:\ \exists \lambda_1,\ldots,\lambda_d\in\N,\  a=\lambda_1a_1+\cdots+\lambda_d a_d\}.\]
Note the heavy use of quantifiers in this definition, demonstrating that parametric Presburger arithmetic is a natural setting. If the $a_i$ are relatively prime, then $S$ contains all sufficiently large integers, and the \emph{Frobenius number} is defined to be the largest integer not in $S$. This is easily encoded in Presburger arithmetic  as the (unique) element $x \in \N$ satisfying
\[ x \notin S \wedge \forall y \in \Z \left( y \notin S \rightarrow y \leq x \right). \]
\end{definition}

Now we let $a_i=a_i(t)$ vary with $t$. Roune and Woods proved \cite{roune_woods}, in a few special cases, that the Frobenius number is an EQP. More recently, Shen \cite{Shen15} proved that this is always true for any eventually positive $a_i\in\Z[t]$.

\begin{example}
Let $a_1(t)=t$, $a_2(t)=t+1$, and $a_3(t)=t+2$. Then the Frobenius number is
\[\left(\floor{\frac{t-2}{2}}+1\right)t-1.\]
This is a running example in \cite{roune_woods}.
\end{example}

One more example (from \cite{woods1}) concisely shows the sort of EQP behavior that may appear:

\begin{example}
\label{ex:PA}
Given $t\in\N$, let
\[S_t=\big\{x\in\N:\ \exists y\in\N,\  2x+2y+3=5t\text{ and }t< x\le y\big\}.\]
We can compute that
\[S_t=\begin{cases}\left\{t+1,t+2,\ldots,\floor{\frac{5t-3}{4}}\right\} & \text{if $t$ odd, $t\ge 3$,}\\
\emptyset & \text{else.}
\end{cases}\]
\end{example}

This set has several properties, which we will formalize in Section \ref{sub:statement}:
\begin{enumerate}
\item The set of $t$ such that $S_t$ is nonempty is $\{3,5,7,\ldots\}$. This set is eventually periodic.
\item The cardinality of $S_t$ is
\[|S_t|=\begin{cases}\floor{\frac{5t-3}{4}}-t & \text{if $t$ odd, $t\ge 3$,}\\
0 & \text{else,}
\end{cases}\]
which is eventually a quasi-polynomial of period 4.
\item When $S_t$ is nonempty, we can obtain an element of (indeed, the maximum element of) $S_t$ with the function $x(t)=\floor{(5t-3)/4}$, and $x(t)$ is eventually a quasi-polynomial.
\item We can compute the generating function
\begin{align*}
\sum_{s\in S_t}x^s&=\begin{cases}x^{t+1}+x^{t+2}+\cdots+x^{\floor{(5t-3)/4}} & \text{if $t$ odd, $t\ge 3$,}\\
0 & \text{else,}
\end{cases}\\
&=\begin{cases}\dfrac{x^{t+1}-x^{\floor{(5t-3)/4)}+1}}{1-x} & \text{if $t$ odd, $t\ge 3$,}\\
0 & \text{else.}
\end{cases}
\end{align*}
We see that, for fixed $t$, this generating function is a rational function. Considering each residue class of $t \bmod 4$ separately, the exponents in the rational function can eventually be written as polynomials in $t$.
\end{enumerate}

Here are two examples that show that our precise definition of parametric Presburger families is important in order to get EQP behavior:

\begin{example}
The family $$S_t = \{x \in \Z : \exists y \left[y \geq 0 \wedge xy = t \right] \}$$ is not a parametric Presburger family, because two variables (neither of which are the parameter, $t$) are multiplied together.  Indeed, $\abs{S_t}$ is the number of nonnegative divisors of $t$, which is not an EQP function in $t$.
\end{example}

\begin{example}
The family \[S_{s,t}=\big\{(x,y)\in\Z^2:\ x\ge 0\wedge y\ge 0\wedge sx+ty=st\big\}\] has two parameters, $s,t\in \N$. $S_{s,t}$ is an interval in $\Z^2$ with endpoints $(t,0)$ and $(0,s)$, and
\[\abs{S_{s,t}}=\gcd(s,t)+1.\]
For a \emph{fixed} $s$, this is an EQP function of $t$ (and vice versa), but it is not jointly an EQP function in $s$ and $t$.
\end{example}

\subsection{Statement of result}\label{sub:statement}
In general, let $S_t$, for $t\in\N$, be a family of subsets of $\Z^d$ and consider the following properties that $S_t$ might have, \textit{cf.} Example \ref{ex:PA}.

\begin{description}
\item[Property 1] The set of $t$ such that $S_t$ is nonempty is eventually periodic.
\end{description}

\begin{description}
\item[Property 2] There exists an EQP $g:\N\rightarrow\N$ such that, if $S_t$ has finite cardinality, then $g(t)=\abs{S_t}$. The set of $t$ such that $S_t$ has finite cardinality is eventually periodic.
\end{description}

\begin{description}
\item[Property 3] There exists a function $\x:\N\rightarrow\Z^d$, whose coordinate functions are EQPs, such that, if $S_t$ is non\-emp\-ty, then $\x(t)\in S_t$. The set of $t$ such that $S_t$ is nonempty is eventually periodic.
\end{description}

\begin{description}
\item[Property 4] (Assuming $S_t\subseteq\N^d$) There exists a period $m$ such that, for sufficiently large $t\equiv i\bmod m$, \[\sum_{\x\in S_t}\z^\x = \frac{\sum_{j=1}^{n_i}\alpha_{ij}\z^{\mathbf{q_{ij}}(t)}}{(1-\z^{\mathbf{b_{i1}}(t)})\cdots(1-\z^{\mathbf{b_{ik_i}}(t)})},\]
where $\alpha_{ij}\in\Q$, and the coordinate functions of $\mathbf{q_{ij}},\mathbf{b_{ij}}:\N\rightarrow\Z^d$ are polynomials with the $\mathbf{b_{ij}}(t)$ eventually lexicographically positive. (That is, for sufficiently large $t$, the vector $\mathbf{b_{ij}}(t)$ is nonzero and its first nonzero coordinate is positive.) 
\end{description}

Property 2 is about counting all solutions and Property 3 is about obtaining specific solutions, and so they seem somewhat different. Generating functions and Property 4 turn out to be the appropriate common generalization. Woods  proves \cite[Theorem 3.3]{woods1} a web of implications among these properties; in particular:  

\begin{thm} \label{implications} 
Let $S_t$ be any family of subsets of $\N^d$. If $S_t$ satisfies Property 4, then it also satisfies Properties 1, 2, and 3.
\end{thm}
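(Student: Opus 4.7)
The plan is to work with the rational-function expression from Property 4 separately on each residue class modulo $m$, using direct algebraic and lex-order arguments to extract Properties 1 and 3, and specializing to a single variable for Property 2. Fix a residue $i$ and write $N(\z,t) = \sum_j \alpha_{ij}\,\z^{\mathbf{q}_{ij}(t)}$ and $D(\z,t) = \prod_\ell (1 - \z^{\mathbf{b}_{i\ell}(t)})$. For Property 1, I group the numerator by polynomial equality: define $j \sim j'$ when $\mathbf{q}_{ij} \equiv \mathbf{q}_{ij'}$ as polynomial vectors in $t$, and for each equivalence class $C$ let $\Sigma_C = \sum_{j \in C} \alpha_{ij}$ and write $\mathbf{q}_C(t)$ for the common polynomial value. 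For sufficiently large $t$ with $t \equiv i \pmod{m}$, distinct classes produce distinct monomials, so $N(\z,t) = 0$ identically in $\z$ if and only if $\Sigma_C = 0$ for every $C$. This condition does not depend on $t$, which gives Property 1: on each residue class, nonemptiness of $S_t$ is eventually constant.

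For Property 3, assume $S_t \neq \emptyset$ on class $i$, so some $\Sigma_C$ is nonzero. I expand each factor $\frac{1}{1 - \z^{\mathbf{b}_{i\ell}(t)}}$ as the geometric series $\sum_{k \geq 0} \z^{k\,\mathbf{b}_{i\ell}(t)}$; this is a legitimate formal power series relative to the lex ordering on exponents because each $\mathbf{b}_{i\ell}(t)$ is eventually lex-positive. Every monomial in the resulting expansion of $N/D$ then has exponent $\mathbf{q}_C(t) + \sum_\ell k_\ell\, \mathbf{b}_{i\ell}(t)$, which lies at or above $\mathbf{q}_C(t)$ in the lex order, with equality only when $\mathbf{k} = 0$. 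Thus the lex-minimum exponent actually appearing is $\mathbf{q}_{C^\ast}(t)$, where $C^\ast$ minimizes $\mathbf{q}_C(t)$ lex-wise over the classes with $\Sigma_C \neq 0$. Because the $\mathbf{q}_C(t)$ are finitely many polynomial vectors, their lex ordering is eventually constant in $t$ (reducing to comparisons of leading coefficients), so $C^\ast$ depends only on $i$. A short check shows $(C^\ast, \mathbf{0})$ is the unique contributor to the exponent $\mathbf{q}_{C^\ast}(t)$, so its coefficient in the expansion is $\Sigma_{C^\ast} \neq 0$; since $\sum_{\x \in S_t} \z^\x$ has $\{0,1\}$-valued coefficients, the coefficient equals $1$ and $\mathbf{x}(t) := \mathbf{q}_{C^\ast}(t) \in S_t$ has polynomial, hence EQP, coordinates.

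For Property 2, I reduce to a single variable by choosing $\mathbf{c} \in \Z^d$ with $\mathbf{c} \cdot \mathbf{b}_{ij}(t) > 0$ for every $i, j$ and large $t$, which is possible because of lex-positivity. Substituting $\z_\ell = u^{c_\ell}$ yields $\tilde G(u,t) = \sum_{\x \in S_t} u^{\mathbf{c} \cdot \x}$, a one-variable series convergent for $|u| < 1$. When $S_t$ is finite, $\tilde G$ is a Laurent polynomial and $\tilde G(1,t) = |S_t|$; from the rational expression I compute this limit by expanding numerator and denominator as Taylor series in $s := u - 1$ and reading off the constant term, yielding $|S_t|$ as a rational function in $t$ with polynomial numerator and denominator. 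Finiteness of $S_t$ is governed by the vanishing order at $u = 1$ of the specialized numerator, which is controlled by polynomial-in-$t$ conditions (the coefficients of low powers of $s$); these either all vanish or all fail cofinitely on each residue class, so the finite/infinite dichotomy is eventually periodic. Finally, any rational function of $t$ taking nonnegative integer values along an arithmetic progression is automatically an EQP on that progression, finishing Property 2. The main obstacle is this last point: one must verify that set-theoretic finiteness of $S_t$ is genuinely equivalent to cancellation of the pole of $\tilde G$ at $u = 1$, and in particular that $\tilde G$ does not acquire additional non-removable poles at other roots of unity. The intended fix is to choose $\mathbf{c}$ sufficiently generically and, if needed, to further refine the period $m$ so that the exponents $\mathbf{c} \cdot \mathbf{b}_{ij}(t)$ fall into controllable residue classes, ensuring that any leftover pole of $\tilde G$ directly witnesses infinitude of $S_t$.
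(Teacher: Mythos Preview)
The paper does not prove this theorem itself; it is quoted as \cite[Theorem 3.3]{woods1}, so there is no in-paper argument to compare against. Let me assess your proposal on its own merits.

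Your arguments for Properties~1 and~3 are correct. The grouping of the numerator by polynomial identity of the $\mathbf{q}_{ij}$ and the lex-minimum extraction both work as written; the key fact that a nonnegative combination of lex-positive vectors is lex-positive (with equality only for the zero combination) is what makes the ``unique contributor'' step go through.

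For Property~2 there are two points to fix.

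\textbf{The choice of $\mathbf{c}$.} Your claim that one can choose $\mathbf{c}\in\Z^d$ with $\mathbf{c}\cdot\mathbf{b}_{ij}(t)>0$ for all $j$ and large $t$ is false in general: take $\mathbf{b}_1(t)=(1,-t)$ and $\mathbf{b}_2(t)=(0,1)$, both lex-positive, yet no fixed $\mathbf{c}$ makes both dot products eventually positive. The repair is painless: take $\mathbf{c}\in\Z_{>0}^d$ (required anyway so that $z_\ell=u^{c_\ell}$ lands in the open unit polydisk for $0<u<1$, where the original series converges) chosen generically so that each polynomial $\mathbf{c}\cdot\mathbf{b}_{ij}$ is not identically zero. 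Then for large $t$ each factor $1-u^{\mathbf{c}\cdot\mathbf{b}_{ij}(t)}$ has a simple zero at $u=1$ regardless of the sign of the exponent, and your Taylor-expansion computation at $s=u-1$ proceeds unchanged.

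\textbf{The finiteness $\Leftrightarrow$ no pole at $u=1$ equivalence.} Your stated obstacle has a clean resolution that does not require refining $m$ or controlling other roots of unity. With $\mathbf{c}\in\Z_{>0}^d$ one has $\tilde G(u,t)=\sum_{n\ge 0}a_n u^n$ where $a_n=|\{\x\in S_t:\mathbf{c}\cdot\x=n\}|\ge 0$, convergent on $|u|<1$. If the rational function $\tilde G$ had no pole at $u=1$ it would be continuous on $[0,1]$; but monotone convergence gives $\tilde G(u)\to\sum_n a_n=|S_t|$ as $u\to 1^-$, so $|S_t|=\infty$ forces a pole there. Poles at other points of the unit circle are irrelevant.

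With these two repairs your Property~2 argument is complete: the Taylor coefficients of numerator and denominator at $s=0$ are polynomials in $t$, so on each residue class finiteness is eventually constant, and on classes where $S_t$ is eventually finite $|S_t|$ is a rational function of $t$ that takes nonnegative integer values on a cofinite arithmetic progression, hence (by the standard B\'ezout/resultant argument) actually a polynomial in $\Q[t]$.
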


Woods conjectures \cite{woods1} that these properties all hold for parametric Presburger families. The contribution of this paper is to prove this conjecture, namely: 
  
\begin{thm} \label{main}
Suppose $S_t\subseteq \Z^d$ is a parametric Presburger family. Then Properties 1, 2, and 3 all hold. Furthermore, if $S_t\subseteq \N^d$, then Property 4 holds. 
\end{thm}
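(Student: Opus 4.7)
By Theorem \ref{implications}, it suffices to establish Property 4 for families $S_t\subseteq\N^d$; the general case $S_t\subseteq\Z^d$ reduces to this by splitting $S_t$ into its $2^d$ orthant-slices, each of which (after flipping signs of coordinates) is a parametric Presburger family in $\N^d$, and then combining the resulting EQP data via Theorem \ref{implications}. I would prove Property 4 itself by induction on the construction of the defining formula $\phi(\x;t)$, showing that the class $\mathcal P_4$ of parametric Presburger families in $\N^d$ whose generating function has the shape prescribed by Property 4 is closed under Boolean combinations and under existential quantification over $\Z$.

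For the base case, a finite conjunction of atomic inequalities $\vec a(t)\cdot\x\le b(t)$ defines a parametric polyhedron $P_t$. Using Theorem \ref{integer_hulls} together with the Chen--Li--Sam EQP cardinality theorem, after refining $t$ into residue classes modulo some $m$, the integer points of $P_t\cap\N^d$ can be described by vertices and tangent-cone generators whose coordinates are polynomials in $t$; a Barvinok-style short generating function expansion at each vertex then yields an expression in exactly the form demanded by Property 4. Closure of $\mathcal P_4$ under finite unions, intersections, and complement in $\N^d$ follows by standard rational-function manipulations (inclusion--exclusion and subtraction from the full series of $\N^d$), which preserve the structural form of the generating function because the exponent vectors $\mathbf{q_{ij}}(t)$ and denominator vectors $\mathbf{b_{ij}}(t)$ remain polynomial in $t$.

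The main obstacle is closure under projection: given $T_t\subseteq\N^{d+1}$ in $\mathcal P_4$, show that $S_t=\{\x:\exists y\in\Z,\,(\x,y)\in T_t\}$ also belongs to $\mathcal P_4$. I would handle this by running Presburger's classical quantifier-elimination procedure \emph{uniformly in $t$}. Writing a defining formula $\phi(\x,y;t)$ for $T_t$ in DNF, each disjunct constrains $y$ by finitely many upper bounds $c_j(t)\,y\le L_j(\x;t)$, lower bounds $c_k(t)\,y\ge L_k(\x;t)$, and congruence conditions; eliminating $y$ produces the pairwise inequalities $c_k L_j\ge c_j L_k$ together with a divisibility predicate ensuring that an integer $y$ lies in the resulting interval. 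Ordinary Presburger QE branches on the signs of the coefficients $c_j$, and the crucial observation is that for $c_j(t)\in\Z[t]$ these signs stabilize once $t$ is restricted to a single residue class modulo a sufficiently large $m$ and taken large enough. Iterating this step and combining with the already-established Boolean closure removes all quantifiers from $\phi$, reducing to the atomic case handled above; managing the growing modulus and the accumulating divisibility predicates through the induction on quantifier depth is the principal technical hurdle of the proof.
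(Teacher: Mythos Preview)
Your projection step has a genuine gap. You propose to eliminate $\exists y$ via Cooper's procedure ``uniformly in $t$,'' arguing that the signs of the coefficients $c_j(t)$ of $y$ eventually stabilize on each residue class. That is true, but it is not the obstruction. Cooper's method, after normalizing so that $y$ appears with coefficient $1$, introduces a divisibility predicate $D_{M}$ with $M$ the least common multiple of the $c_j$, and then replaces $\exists y$ by a disjunction over a set of candidate witnesses of size $M$. When the $c_j$ depend on $t$, so does $M=M(t)$, and the disjunction has length growing with $t$: it cannot be written as a single $\mathcal L_{\textup{EQP}}$-formula. Example~\ref{Ex:step1} already exhibits this: to eliminate $y$ from $x+1\le ty$ one must case-split on $x\bmod t$, which is $t$ cases, not finitely many. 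The paper notes explicitly (end of the summary in Section~\ref{sec:outline}) that quantifier elimination in $\mathcal L_{\textup{EQP}}$ is impossible, so no variant of ``run Presburger QE uniformly in $t$'' can succeed as stated.

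What the paper does instead is accept that full QE fails and settle in Step~1 for an $\mathcal L^+_{\textup{EQP}}$-formula with \emph{polynomially-bounded} quantifiers (Theorem~\ref{qe_main}); this is exactly the residual $\exists i\,[0\le i\le t-1\wedge\cdots]$ in Example~\ref{Ex:step1}. The bounded quantifiers are then removed not by logic but by the combinatorial base-$t$ substitution of Step~3 (Proposition~\ref{quantifier_separation}), which rewrites every bounded variable as $\sum_j b_{ij}t^j$ with $0\le b_{ij}<t$ and thereby decouples the quantified variables from any multiplication by $t$. Only after this decoupling does classical Cooper QE apply (Step~4), because now the coefficients of the quantified variables are genuine integers and the candidate list is finite. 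Your proposal is missing this mechanism; ``managing the growing modulus'' is not a technical nuisance to be handled in the induction but the actual content of the theorem, and it requires the affine reductions of Steps~2--3 rather than a direct elimination.
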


In general, quantifiers and Boolean operations allow us to perform many useful operations on sets. For example, if $S_t$ is any parametric Presburger set and $\vec c \in \Z^d$ is constant, then the set of $\vec x\in S_t$ maximizing $\vec c \cdot \vec x$ can be defined 
using the parametric Presburger formula
\[\x\in S_t\quad \wedge\quad \forall y\in\Z^d \left(\y\in S_t \rightarrow \vec c\cdot \y\le \vec c\cdot \x\right).\]
Likewise, if we know that Property 3 holds for all parametric Presburger families, and if $\vec x : \N \rightarrow \Z^d$ is a function with EQP coordinates picking out an element of $S_t$ whenever this set is nonempty, then we can apply Property 3 again to the new parametric Presburger family $S_t \setminus \{\vec x (t) \}$ to obtain a function $\vec x_2 : \N \rightarrow \Z^d$ such that whenever $|S_t| \geq 2$, the pair $(\vec x(t), \vec x_2(t) )$ selects two distinct elements of $S_t$.

Arguing as in the previous paragraph, Woods proved \cite[Theorem 3.4]{woods1} that the following would be an immediate corollary to Theorem~\ref{main}:

\begin{cor}
Suppose $S_t\subseteq \Z^d$ is a parametric Presburger family. Then $S_t$ has the following properties:
\begin{description}
\item[Property  3a] Given $\vec c\in \Z^d\setminus\{0\}$, there exists a function $\vec x:\N\rightarrow\Z^d$ such that, if $\max_{\y\in S_t} \vec c\cdot \y$ exists, then it is attained at $\x(t)\in S_t$, and the coordinate functions of $\x$ are EQPs. The set of  $t$ such that the maximum exists is eventually periodic.

\item[Property  3b] Fix $k\in\N$. There exist functions $\vec x_1,\ldots,\vec x_k:\N\rightarrow\Z^d$ such that, if $\abs{S_t}\ge k$, then $\x_1(t),\ldots,\x_k(t)$ are distinct elements of $S_t$, and the coordinate functions of $\x_i$ are EQPs. The set of $t$ such that $\abs{S_t}\ge k$ is eventually periodic.
\end{description}
\end{cor}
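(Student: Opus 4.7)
The plan is to reduce both properties to Theorem~\ref{main} applied to auxiliary parametric Presburger families, exploiting closure of the parametric Presburger class under Boolean combinations, quantifiers over $\Z$, and polynomial substitution in the parameter.

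For Property~3a, I would define
\[T_t = \bigl\{\x \in S_t : \forall \y \in \Z^d,\ (\y \in S_t \rightarrow \vec c\cdot \y \le \vec c\cdot \x)\bigr\},\]
which is again a parametric Presburger family, with $T_t \ne \emptyset$ precisely when $\max_{\y \in S_t}\vec c \cdot \y$ is attained. Properties~1 and~3 from Theorem~\ref{main}, applied to $T_t$, then yield the eventual periodicity of the set of $t$ where the maximum exists and the desired EQP-coordinate selector.

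For Property~3b, I would induct on $k$. The base case $k=1$ is Property~3 itself. Assume inductively that EQP-coordinate functions $\x_1,\ldots,\x_{k-1}$ have been constructed listing $k-1$ distinct elements of $S_t$ whenever $|S_t|\ge k-1$. Let $m$ be a common period, so that on each residue class $t \equiv i \bmod m$ and for sufficiently large $t$, each $\x_j(t)$ equals a polynomial $\vec p_{ij}(t)$. A naive attempt to apply Theorem~\ref{main} to $S_t \setminus \{\x_1(t),\ldots,\x_{k-1}(t)\}$ fails because this family is not literally parametric Presburger---EQPs, not polynomials, appear. I would resolve this by working one residue class at a time: for each $i \in \{0,\ldots,m-1\}$, the substitution $t = ms+i$ turns $\{S_{ms+i}\}_{s\in\N}$ into a parametric Presburger family in the new parameter $s$, and the subtracted points $\x_j(ms+i) = \vec p_{ij}(ms+i)$ become polynomial in $s$, so that
\[S^{(i)}_s := S_{ms+i} \setminus \{\vec p_{i1}(ms+i),\ldots,\vec p_{i,k-1}(ms+i)\}\]
is a parametric Presburger family in $s$. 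Applying Theorem~\ref{main} to each $S^{(i)}_s$ yields an EQP-in-$s$ selector $\y_i$, and assembling these across residue classes produces a single $\x_k : \N \to \Z^d$ with EQP coordinates that picks a $k$-th element of $S_t$ whenever $|S_t|\ge k$. The eventual periodicity of $\{t : |S_t|\ge k\}$ follows from Property~1 applied to each $S^{(i)}$.

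The main obstacle is the bookkeeping for the residue-class reduction in Property~3b---verifying that the substitution $t=ms+i$ preserves parametric Presburger definability, and that recombining the per-residue selectors $\y_i$ yields a single function whose coordinates are still EQPs in $t$. Once that is in hand, both properties are immediate consequences of Theorem~\ref{main}.
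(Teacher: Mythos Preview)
Your approach is essentially the same as the paper's, which (in the paragraph preceding the corollary and via the citation to \cite{woods1}) derives Property~3a by applying Theorem~\ref{main} to the set of maximizers $T_t$ and derives Property~3b by iteratively removing the previously selected points and reapplying Property~3.

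One remark: your concern that $S_t \setminus \{\x_1(t),\ldots,\x_{k-1}(t)\}$ is ``not literally parametric Presburger'' is unwarranted, so the residue-class substitution is an unnecessary detour. The condition $\x = \vec p(t)$ for an EQP $\vec p$ is directly expressible in $\mathcal{L}_{\textup{EQP}}$: one takes a finite disjunction over residue classes $i$ and over the finitely many exceptional small values of $t$, using clauses of the form
\[
\bigl(\exists u\,[m u = t - i]\bigr) \wedge \bigl(\x = \vec p_i(t)\bigr),
\]
where $t$ is the term $\lambda_t(1)$ and the existential quantifier is over an ordinary integer variable (not over the parameter). Hence $S_t \setminus \{\x_1(t),\ldots,\x_{k-1}(t)\}$ is already a parametric Presburger family, and the paper simply applies Theorem~\ref{main} to it directly. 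Your workaround is valid and yields a correct proof, but it is not needed.
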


As a further illustration of the power of parametric Presburger arithmetic, note that the ability to maximize a linear functional allows us to run the beyond-and-beneath algorithm (see Gr\"unbaum \cite[Section 5.2]{Grunbaum}) on the integer points in a parametric polyhedron, $P_t$, defined by linear inequalities of the form $\vec a(t)\cdot\x\le b(t)$. This will iteratively compute vertices of the integer hull, and they will have EQP coordinates. Given the uniform (across $t$) bound on the number of vertices, as proved in \cite{CW}, this suffices to re-prove Theorem \ref{integer_hulls}. All other theorems discussed in Section \ref{sub:examples} are immediate consequences of Theorem \ref{main}.

First, in Section  \ref{sec:outline} we give an outline of the proof of Theorem \ref{main}, which combines ideas from logic and combinatorics. We set up the logical foundations carefully in Section \ref{sec:logic}, and then we prove Properties 1, 2, and 3 for Theorem \ref{main} in Section \ref{sec:proof}. This allows us to prove them in the full generality of $\Z^d$, but more importantly it shows that the generating function tools are not necessary: we have enough combinatorial information about the sets $S_t$ to prove Properties 1, 2 and 3 directly. In Section \ref{sec:pl}, we discuss generating functions in more detail and prove Property 4. 

\section{Outline of Proof}
\label{sec:outline}

We are given a set $S_t$ defined in parametric Presburger arithmetic, whose language we will denote $\mathcal{L}_{\textup{EQP}}$. This language allows (repeated) multiplication by $t$, so that the \emph{atomic formulas} (the basic building blocks) are of the form $\vec a(t)\cdot \x \le b(t)$ or $\vec a(t)\cdot \x = b(t)$, where $\vec a(t)\in\Z[t]^d, b(t)\in\Z[t]$. More complicated formulas may be built up using Boolean operations and quantifiers.

By contrast, the language of \emph{standard} Presburger arithmetic, which we denote $\mathcal L_{Pres}$, does not allow multiplication by $t$, so that its atomic formulas are of the form $\vec a\cdot \x \le b$ or $\vec a\cdot x = b$, where $\vec a\in\Z^d, b\in\Z$. A standard technique to analyze formulas in $\mathcal L_{Pres}$ is \emph{quantifier elimination}: obtain a \emph{logically equivalent} formula (one defining the same set) that has no quantifiers.

\begin{example}
The statements
\[\exists y\left[x+y\le z \ \wedge\ y\ge 3\right]\quad\text{and} \quad x\le z-3\]
are logically equivalent modulo the theory of Presburger arithmetic; in the second statement, the quantified variable $y$ has been eliminated.
\end{example}

Presburger originally analyzed \cite{Presburger29} (see \cite{Presburger91} for a translation) these formulas; Cooper \cite{cooper} uses the following strategy for quantifier elimination:

Eliminate quantified variables one at a time (innermost to outermost). Since $\forall y\ \varphi(\x, y)$ is equivalent to $\neg \exists y\, \neg \varphi(\x, y)$, we may assume we are eliminating an existential quantifier from a formula of the form $\exists y\ \varphi(\x, y)$. Cooper's strategy is to find a finite set of candidate $y$'s (each written in terms of $\x$) such that:  there exists $y$ making $\varphi(\x, y)$ true if and only if one of the candidate $y$'s makes $\varphi(\x,y)$ true.

\begin{example}
If there exists a $y\in\Z$ such that
\[x+1\le y\le z\ \wedge\ 2y\le 3z-x\]
then $y=x+1$ must be such a $y$ (this is the smallest integer $y$ satisfying the only lower bound on $y$, and therefore it is the most likely $y$ to also satisfy the upper bounds). Substituting in $y=x+1$ eliminates the quantifier, and we are left with
\[x+1\le z\ \wedge\ 2(x+1)\le 3z-x.\]
\end{example}

In general, unfortunately, this does not quite work to guarantee that we may eliminate a quantifier. Instead, we may need to introduce some divisibility conditions:

\begin{example}
If there exists a $y\in\Z$ such that
\[x+1\le 2y\le z\ \wedge\ 2y\le 3z-x\]
then our candidate for $y$ depends on the parity of $x$: if $x$ is odd, then $y=(x+1)/2$ is our candidate, and if $x$ is even, then $y=(x+2)/2$ is our candidate. Allowing ourselves to use divisibility by 2 in the formula, we eliminate $y$ to get:
\[\big(2\big|x\ \wedge\ x+2\le z\ \wedge\ x+2\le 3z-x\big)\ \ \vee\ \  \big(2\big|(x-1)\ \wedge\ x+1\le z\ \wedge\ x+1\le 3z-x\big).\]
\end{example}

To completely eliminate all quantifies in $\mathcal L_{Pres}$, we must \emph{extend} our language to $\mathcal L^+_{Pres}$, where testing divisibility by a constant, $c$, is allowed.  Formally, we introduce \emph{divisibility predicates}, $D_c$, for constants $c$, into our language. For example, we write $2\big|(x-1)$ as $D_2(x-1)$.

Our proof begins by trying  to apply these techniques in $\mathcal L_{\textup{EQP}}$, where multiplication by $t$ is allowed.

\bigskip

\textbf{Step 1:} Goodrick does exactly this in \cite{presburger_bounded_qe} (and Lasaruk and Sturm \cite{lasaruksturmweakQE} independently arrived at essentially the same result using a different technique). Not surprisingly, we need to extend our language to $\mathcal{L}^+_{\textup{EQP}}$, where we are also allowed to test divisibility by polynomials in $t$ (that is, we allow divisibility predicates $D_{f(t)}$, where $f(t)$ is a polynomial in $t$). Even this is not quite enough to eliminate quantifiers:

\begin{example}
\label{Ex:step1}
If there exists a $y\in\Z$ such that
\[x+1\le ty\le z\ \wedge\ ty\le 3z-x\]
then our candidate for $y$ depends on $x\bmod t$: if $x\equiv i\bmod t$, for $0\le i\le t-1$, then $y=(x+t-i)/t$ is our candidate. Unfortunately, we now have $t$ different candidates for $y$, and we cannot simply list them in a formula. Instead, we must write something like:
\[\exists i \left[0\le i\le t-1\ \wedge\ t\big| (x-i)\ \wedge\ \big(x+t-i\le z\big)\ \wedge\ \big(x+t-i\le 3z-x \big)\right]\]
We have replaced the old quantified $y$ with a new quantified $i$, but we have gained something: $i$ is \emph{bounded} by a polynomial in $t$.
\end{example}

Goodrick proves \cite{presburger_bounded_qe} that any parametric Presburger family $\{S_t : t \in \N\}$ is definable by an $\mathcal{L}^+_{\textup{EQP}}$-formula with \emph{polynomially-bounded quantifiers}. Using this result, we now have that every quantified variable $y$ is associated with a condition of the form $0\le y\le f(t)$, where $f:\N\rightarrow\Z$ is a polynomial function of $t$.

\bigskip

\textbf{Step 2:} 
Next, we eliminate all occurrences of the divisibility predicates $D_{f(t)}$. We do this by replacing each variable $x_i$ by an expression $g(t) \cdot u_i + v_i$ where $u_i$ and $v_i$ are new variables, $g$ is a nonnegative common multiple of all the functions $f_1, \ldots, f_n$ which occur in divisibility predicates $D_{f_i}$ in the formula, and with the restriction that $0 \leq v_i < g(t)$. That is, $u_i$ and $v_i$ represent the quotient and remainder when $x_i$ is divided by $g(t)$.

\begin{example} 
\label{Ex:step2}
Let
\[S_t=\{(x_1,x_2)\in\Z^2:\ D_{t+1}(x_1 + 2 x_2)\ \wedge\ D_t(x_1-x_2)\}.\]
Let $g(t) = t(t+1)$ and $x_i = g(t) u_i + v_i$. Then $D_{t+1}(x_1 + 2 x_2)$ is equivalent to $D_{t+1}(v_1 + 2 v_2)$, since $(t+1)\ \big|\ g(t)$. Furthermore, $0\le v_i<t(t+1)$ implies that $v_1+2v_2<3t(t+1)$, and so $D_{t+1}(v_1 + 2 v_2)$ could be replaced by the formula
\[\exists y \left[0 \leq y \le 3t\ \wedge\ y \cdot (t+1) = v_1 + 2 v_2 \right].\]
Replacing both predicates, we define $S'_t$ as the set
\begin{align*}
\Big\{(u_1,u_2,v_1,v_2):\ &\exists y \left[0 \leq y \leq 3t\ \wedge\ y \cdot (t+1) = v_1 + 2 v_2 \right]
\wedge\ \exists z \left[-(t+1) \le z \le t+1\ \wedge\ z \cdot t = v_1 - v_2 \right] \Big\}.
\end{align*}
$S'_t$ is in bijection to $S_t$ under the map $(u_i,v_i)\mapsto x_i = g(t) u_i + v_i$.
\end{example}
Notice that we have eliminated the divisibility predicates, at the expense of introducing some additional polynomially-bounded quantifiers. The result of Step 2 is a new parametric Presburger family $S'_t$ whose points are in bijection with those of $S_t$ and such that $S'_t$ is defined by an $\mathcal{L}_{\textup{EQP}}$-formula with polynomially-bounded quantifiers. Furthermore, the bijection from $S'_t$ to $S_t$ is an \emph{affine reduction}; that is, it is given by an affine linear function whose coordinates are eventually quasi-polynomial in $t$. We will see that this preserves all the properties of $S_t$ that we care about.  

\bigskip

\textbf{Step 3:} We now apply a version of Chen--Li--Sam's ``base $t$'' method \cite{ChLiSam}, in order to finally eliminate the (polynomially-bounded) quantified variables, $y_1,\ldots,y_d$. After reducing to the case that all variables are nonnegative, suppose that we have some $k$ such that, for all $i$, $0\le y_i< t^k$ are bounds on the quantified variables.

We write each $y_i$ ``base $t$,'' using new variables $b_{ij}$:
 \[y_i=\sum_{j=0}^{k-1}b_{ij}t^j,\quad 0\le b_{ij} <t.\]
Suppose that $\ell$ is the largest degree of any polynomial $f(t)$ appearing as a coefficient of some $y_i$. Write each free (unquantified) variable, $x_i$, in base $t$ using new variables $a_{ij}$ and $z_i$:
\[x_i=z_i t^{k+\ell} + \sum_{j=0}^{k+\ell-1}a_{ij}t^j,\quad 0\le a_{ij} < t.\]
Note that we have no bound on the $x_i$, so we must allow an \emph{unbounded} $t^{k+\ell}$ coefficient. The resulting $S'_t$ will be in bijection with the original $S_t$, with the above formulas for $x_i$ and $y_i$ yielding the bijection. As in Step 2, the bijection is an affine reduction. 

We now follow the Chen--Li--Sam method \cite{ChLiSam}, and iteratively look at the $t^0$ coefficients of each term, the $t^1$ coefficients, etc.

\begin{example}
\label{Ex:step3}
Suppose we have the formula\footnote{The reader may notice that this formula is equivalent to a quantifier-free formula in which we replace the variables $y_i$ by their maximum values $t^2-1$. However, such a replacement will not generally work with more complex formulas, whereas the technique of this example can be applied separately to each of the atomic inequalities.}
\[ 0 \leq x_1, x_2 \wedge \exists y_1,y_2 \left[ \left( 0\le y_i<t^2 \right) \wedge  \left( x_1-tx_2 \leq (t+1)y_1+(t+2)y_2 \right)\right].  \]
Replace $y_i$ by $b_{i1}t+b_{i0}$ and $x_i$ by $z_it^3 + a_{i2}t^2+\cdots+a_{i0}$, with $0\le b_{ij}<t$ and with $0\le a_{ij}<t$. That is, $z_1$ and $z_2$ are the only variables not bounded by $t$. The main inequality is now equivalent to 
\begin{align*}&t^4(-z_2)+t^3(z_1-a_{22})+t^2(a_{12}-a_{21}-b_{11}-b_{21})\\
+&t(a_{11}-a_{20}-b_{11}-b_{10}-2b_{21}-b_{20})+(a_{10}-b_{10}-2b_{20}) \leq 0.
\end{align*}

Write this inequality as
\begin{equation} \label{tu-inequality} tu + f_0 \leq 0 \end{equation}
where $f_0 := a_{10}-b_{10}-2b_{20}$. Since the $a$- and $b$-variables are all bounded between 0 and $t-1$, we see that
\[ -3t + 3 \leq f_0 \leq t-1.\]
This implies that we are in one of four exclusive cases:
\[ \left( -3t+3 \leq f_0 \leq -2t \right) \vee \left( -2t+1 \leq f_0 \leq -t \right) \vee \left( -t+1 \leq f_0 \leq 0 \right) \vee \left( 1 \leq f_0 \leq t-1 \right) .\]

Let's look at the case where $-3t+3 \leq f_0 \leq -2t$ 
(we will handle each of the finite number of other cases separately.) Dividing inequality (\ref{tu-inequality}) by $t$ and taking integer parts, we obtain the equivalent inequality
\begin{equation*} \label{ceiling-inequality}  u + \left\lceil \frac{f_0}{t} \right\rceil \leq 0. \end{equation*}
By the hypothesis on $f_0$, $\left\lceil \frac{f_0}{t} \right\rceil = -2$, so we have reduced to the inequality $u-2\le 0$, that is,
\begin{align*}t^3(-z_2)&+t^2(z_1-a_{22})+t(a_{12}-a_{21}-b_{11}-b_{21}) 
+(a_{11}-a_{20}-b_{11}-b_{10}-2b_{21}-b_{20}-2) \leq 0.\end{align*}

Consider the new constant term $f_1 := a_{11}-a_{20}-b_{11}-b_{10}-2b_{21}-b_{20}-2$. Again using the bounds on $a_{ij},b_{ij}$ we have
\[ -6t+4 \leq f_1 \leq t-3.\]
The relevant seven cases are now
\[\left( -6t+4 \leq f_1 \leq -5t \right) \vee \left( -5t+1 \leq f_1 \leq -4t \right) \vee \cdots \vee \left( 1 \leq f_1 \leq t-3 \right) \]
and we can again look at them separately. For example, if $-4t+1 \leq f_1 \leq -3t$, then by again dividing by $t$, the original inequality becomes
\[ t^2(-z_2)+t(z_1-a_{22})+(a_{12}-a_{21}-b_{11}-b_{21}-3) \leq 0. \]
Once again we break into a finite list of cases for the constant term $f_2$. For example, in the case
\[ -2t+1 \leq a_{12}-a_{21}-b_{11}-b_{21}-3 \leq -t, \]
the original inequality becomes equivalent to
\begin{equation} \label{z-inequality} t(-z_2) + (z_1-a_{22}-1) \leq 0.\end{equation}

Since $z_1$ is unbounded, we cannot continue with this method. But notice that the case-defining inequalities $-3t + 3 \leq f_0 \leq -2t$, $-4t+1 \leq f_1 \leq -3t$, and $-2t+1 \leq f_2 \leq 1$ do not involve multiplication by $t$, and the inequality (\ref{z-inequality}) involves no quantified variables.  
\end{example}

This strategy works in general. Our new formula has two types of atomic subformulas: those that involve quantified variables, which are in classical Presburger arithmetic (do not involve multiplication by $t$), and those that use only unquantified variables.

\bigskip

\textbf{Step 4:} We may now apply the classical quantifier elimination procedure to get a logically equivalent quantifier-free formula. Theorem~1.5 from \cite{presburger_bounded_qe} says that this works, because the parameter $t$ does not occur in any of the atomic subformulas involving quantified variables. As discussed before Step 1, we must extend our language to allow $D_c$, divisibility predicates for constants $c$. To summarize, we now have a quantifier-free formula in parametric Presburger arithmetic, with the addition of $D_c$ predicates.

\bigskip

\textbf{Step 5:} Now that we have a quantifier-free formula, we are ready to understand the geometry of these sets $S_t\subseteq\Z^d$. Our formula consists of atomic formulas of the form
\begin{itemize}
\item $\mathbf{f} (t)\cdot \x \le g(t)$ and
\item $D_c\big(\mathbf{f}(t)\cdot\x-g(t)\big)$.
\end{itemize}
If our formula were simply a \emph{conjunction} of such atomic formulas, then we would have that
\[S_t=P_t\cap(\lambda_t+\Lambda_t),\]
where $P_t\subseteq \R^d$ is a polyhedron (which changes as $t$ changes) defined by the linear inequalities, $\Lambda_t\subseteq\Z^d$ a lattice, and $\lambda_t\in\Z^d$ a translation vector, with the latter two defined by the divisibility conditions.

\begin{example}
\label{ex:polyhedron}
If $S_t$ is the set of $x\in\Z$ such that
\[-x\le 0\ \wedge\ tx\le t^2+1\ \wedge\ D_2(x+t)\]
then
\[P_t=[0,(t^2+1)/t]\subseteq\R,\quad \Lambda_t=2\Z,\quad \lambda_t = (t\bmod 2).\]
\end{example}

Of course our $S_t$ may not be simply a conjunction of these atomic formulas, so we first show that $S_t$ can be written as a disjoint union of such sets, using a variant of Disjunctive Normal Form. Then we may concentrate on each piece individually; $\abs{S_t}$, for example, will simply be the sum of the cardinalities of each piece.

So we may assume that $S_t$ is a conjunction of these atomic formulas. Note that a conjunction of formulas of the form $D_c\big(\mathbf{f}(t)\cdot\x-g(t)\big)$ is an ``external'' representation of a translation of a lattice, i.e., defined via constraints, using the language of \cite{Sch}. We'd like an ``internal'' representation, i.e., defined parametrically; in this case, this would be a collection of basis vectors for the lattice, together with a translation vector. 

\begin{example}
\label{Ex:step5a}
In Example \ref{ex:polyhedron}, we converted the external representation $D_2(x+t)$ into the internal representation: lattice $\Lambda_t\subseteq\Z$ with basis $\{2\}$ and translate $\lambda_t=(t\bmod 2)$.
\end{example}

By examining Hermite Normal Forms, we prove that this can be accomplished while preserving EQP properties, and then this basis allows us to apply an affine reduction to eliminate any $D_c$ terms.

\begin{example}
\label{Ex:step5b}
Continuing Example \ref{ex:polyhedron}, our internal representation of $\Lambda_t+ \lambda_t$ implies the following: for odd $t$, any $x$ satisfying $D_2(x+t)$ is of the form $x=2u+1$, for some $u\in \Z$. Applying the affine reduction given by $x=2u+1$ yields (for odd $t$)
\[S'_t=\big\{u\in\Z:\  -(2u+1)\le 0 \wedge t(2u+1)\le t^2+1\big\}.\]
That is, we have eliminated all divisibility terms from the formula.
\end{example}

In other words, we now simply have that $S_t$ is the set of integer points in a parametric polyhedron, defined with linear inequalities of the form $\mathbf{f} (t)\cdot \x \le g(t)$. This is again an ``external'' representation. We show that we can convert to an ``internal'' representation and preserve EQP properties, using row reduction over $\Q(t)$. In this case, an internal representation is a list of vertices (given as ratios of EQPs), of extreme rays of the recession cone (encoding directions, $\vec y$, in which the polyhedron is infinite in the $\vec y$ direction but not in the $-\vec y$ direction), and of a basis for the lineality space (encoding directions $\vec z$ in which it is infinite in both the $\vec z$ and $-\vec z$ directions).

\begin{example}
\label{Ex:step5c}
Continuing Example \ref{ex:polyhedron}, the external representation $-(2u+1)\le 0 \wedge t(2u+1)\le t^2+1$ defines the polyhedron with vertices $0$ and $(t^2-t+1)/2t$.
Since this is a bounded polyhedron, the recession cone and the lineality space are trivial.
\end{example}

\bigskip

\textbf{Step 6:}  
We are now ready to prove Properties 1, 2, and 3. Property 1 will follow from Property 3 trivially. Property 2 follows directly from the Chen, Li, Sam result \cite{ChLiSam}, as long as our polyhedron is bounded. Since we know whether our polyhedron has non-trivial recession cone and lineality space, we know whether it is bounded. If it is unbounded, then $\abs{S_t}$ is either 0 or infinite, and we will show how to figure out which is correct. Property 3 similarly follows from Shen \cite{Shen15b}. We delay discussion of generating functions until Section \ref{sec:pl}, but Property 4 follows from Woods \cite{woods1}.

\subsection{Summary of Outline of Proof.}

Suppose that we are given any parametric Presburger family $S_t \subseteq \Z^d$ defined by an $\mathcal{L}_{\text{EQP}}$-formula $\varphi$. Then we will apply a series of logical equivalences and affine reductions as follows:

\[ \varphi \xleftarrow[\text{(Step 1)}]{\text{logic}} \varphi_1 \xleftarrow[\text{(Step 2)}]{\text{affine}} \varphi_2 \xleftarrow[\text{(Step 3)}]{\text{affine}} \varphi_3 \xleftarrow[\text{(Step 4)}]{\text{logic}} \varphi_4 \xleftarrow[\text{(Step 5)}]{\text{logic + affine}} \varphi_5 \]

where:

\begin{enumerate}
\item $\varphi_1$ is an $\mathcal{L}^+_{\textup{EQP}}$-formula with polynomially-bounded quantifiers,
\item $\varphi_2$ is an $\mathcal{L}_{\textup{EQP}}$-formula with polynomially-bounded quantifiers,
\item $\varphi_3$ is an $\mathcal{L}_{\textup{EQP}}$-formula in which no variable within the scope of a quantifier is multiplied by the parameter $t$,
\item $\varphi_4$ is a \emph{quantifier-free} $\mathcal{L}^+_{\textup{EQP}}$-formula whose only divisibility predicates are $D_c$ for constant functions $c$, and
\item $\varphi_5$ is a disjoint union of conjunctions of atomic $\mathcal{L}_{\textup{EQP}}$-formulas (and hence quantifier-free).
\end{enumerate}

At each stage, the reductions preserve all the properties we are interested in, until we finally reduce to the case of the parametric polyhedra defined by $\varphi_5$. In Step 6, we deal with these polyhedra by applying previously-established combinatorial techniques from Chen-Li-Sam \cite{ChLiSam}, Woods \cite{woods1}, and Shen \cite{Shen15b}.

The sequence of reductions is a little complicated, so one might well wonder whether there is a more direct proof. For instance, one might try to show that any $\mathcal{L}_{\text{EQP}}$-formula is logically equivalent to a quantifier-free formula in a slightly larger language with additional ``well-behaved'' function and relation symbols (the method of \emph{quantifier elimination} from logic), then prove a generalization of the main theorem from \cite{woods1} that quantifier-free parametric Presburger formulas have the EQP behavior we seek. But we already know that quantifier elimination in the original language $\mathcal{L}_{\text{EQP}}$ is impossible (see \cite{presburger_bounded_qe}), and finding a reasonable language for quantifier elimination seems difficult. For example, Kraj\'i\v{c}ek \cite{kraj} gave the name ``two-sorted Presburger arithmetic'' to the complete first-order theory of $\Z$ as an ordered $\Z$-module in a language with two sorts of variables (one for the ring, one for the module). Two-sorted Presburger arithmetic is very similar to the logical system studied in this paper, and Kraj\'i\v{c}ek noted ``it is an interesting open question whether a form of quantifier elimination holds.'' Two-sorted Presburger arithmetic without a relation symbol for $\leq$ was studied by van den Dries and Holly \cite{vdDHolly}, who gave a quantifier elimination theorem for this weaker system in a language including a relation symbol for divisibility and function symbols for $\textup{gcd}$ and a few other closely related arithmetic functions.

\section{Parametric Presburger families and polynomially-bounded quantifiers}
\label{sec:logic}
In this section, we set notation and definitions and review the result on bounding quantifiers in parametric Presburger families proved in \cite{presburger_bounded_qe}. We also define affine reductions and note that they preserve Properties 1 through 3.

\subsection{Parametric Presburger arithmetic}
We will use standard notation and terminology from first-order logic, for which any modern textbook on the subject could serve as a reference (for instance, \cite{enderton} or \cite{hodges}). In particular, a \emph{language} is a set of finitary relational, functional, and constant symbols, usually denoted by $\mathcal{L}$ with decorations. Given a language $\mathcal{L}$, an \emph{$\mathcal{L}$-formula} means a first-order formula in $\mathcal{L}$: we allow basic symbols in $\mathcal{L}$ plus equality, symbols for variables, Boolean operations, and quantifiers.

We let $\mathcal{L}_{Pres}$ be the first-order language with symbols for $0$ and $1$ (constants), $<$ (binary relation for the ordering), $-$ (a unary operation symbol for negation), and $+$ (binary function symbol for addition).

We will always work in the standard model of Presburger arithmetic with universe $\Z$, so whenever we write ``$\models \varphi$'' or ``$\varphi$ holds'' we are referring to the standard interpretation of the language.

$\mathcal{L}^+_{Pres} \supseteq \mathcal{L}_{Pres}$ is the expansion which includes unary predicates $D_c$ for each $c \in \Z$, to be interpreted as divisibility by $c$. By convention, if $c=0$, then ``$D_0(x)$'' is false for every $x \in \Z$, even if $x = 0$.

Now we come to the main definition:

\begin{definition}
A \emph{parametric Presburger formula} is a first-order formula in the language $\mathcal{L}_{\textup{EQP}} := \mathcal{L}_{Pres} \cup \{ \lambda_t\}$. The intended interpretation of $\lambda_t(x)$ is that we are multiplying $x$ by the parameter $t$ which takes some value from $\N$.

Given $t \in \N$ and an $\mathcal{L}_{\textup{\textup{EQP}}}$-formula $\varphi$, we define the $\mathcal{L}_{Pres}$-formula $\varphi_t$ to be the translation of $\varphi$ defined recursively so that each term of the form $\lambda_t(s)$ occurring in $\varphi$ (where $s$ is a term) is replaced by one of the following:

\begin{enumerate}
\item If $t > 0$, then $\lambda_t( s) $ is replaced in $\varphi_t$ by $s + s + \ldots + s$ with $t$ repetitions of $s$;
\item if $t = 0$, then $\lambda_t( s)$ is replaced in $\varphi_t$ by the constant symbol $0$.
\end{enumerate}

\end{definition}

\begin{remark}
Here and below, we will adopt the notational convention of writing ``$f(t) \cdot s$'' for the $\mathcal{L}_{\textup{EQP}}$-term formed from the term $s$ and $f(t) \in \Z[t]$ by repeated applications of the function $\lambda_t$. For instance, the expression $(t^2 + 2) \cdot x_1$ stands for the term $\lambda_t(\lambda_t(x_1)) + x_1 + x_1$.

\end{remark}

When we write $\varphi(\x)$, we mean that all of the free variables occurring in $\varphi$ are listed in the tuple $\x = (x_1, \ldots, x_d)$. (Throughout, boldface letters such as $\A$ and $\x$ will always denote finite tuples.)

The definition of $\varphi_t$ above allows us to talk about the \emph{truth} of $\mathcal{L}_{\textup{EQP}}$-formulas relative to a parameter $t \in \N$: given an $\mathcal{L}_{\textup{EQP}}$-formula $\varphi(x_1, \ldots, x_d)$ whose free variables are contained in $\{x_1, \ldots, x_d\}$ and $(k_1, \ldots, k_d) \in \Z^d$, we will write $$\models \varphi_t(k_1, \ldots, k_d)$$ just in case the $\mathcal{L}_{Pres}$-formula $\varphi_t$ is true with the variable $x_i$ evaluated as $k_i$.

\begin{definition}
A \emph{parametric Presburger family} is a family of sets $\{S_t : t \in \N \}$ such that $$S_t = \{ (k_1, \ldots, k_d) \in \Z^d : \, \, \, \models \varphi_t(k_1, \ldots, k_d) \}$$ for some fixed $d$ and some $\mathcal{L}_{\textup{EQP}}$-formula $\varphi(x_1, \ldots, x_d)$.
\end{definition}

From a logical standpoint, the idea behind parametric Presburger definability is that we are essentially expanding classical Presburger arithmetic by a restricted multiplication function: we allow multiplication of any variable or term by the special parameter variable $t$, but we do not allow multiplication between any of the other variables, and we do not allow quantification over $t$, thus avoiding the complications of sets definable in the full first-order theory of $(\Z; <, +, \cdot)$. 

\begin{definition}
$\mathcal{L}^+_{\textup{EQP}} := \mathcal{L}_{\textup{EQP}} \cup \{D_{f(t)} : f(t) \in \Z[t] \}$, where $D_{f(t)}$ is a unary relation symbol denoting divisibility by the value of $f(t)$. 

Given an $\mathcal{L}^+_{\textup{EQP}}$-formula $\varphi(x_1, \ldots, x_d)$, for any $t \in \N$ and $a_1, \ldots, a_d \in \Z$, we can define the truth value of $\varphi_t(a_1, \ldots, a_d)$ as before, recalling the convention that $D_0(x)$ is always false.
\end{definition}

\begin{definition}
Two $\mathcal{L}^+_{\textup{EQP}}$-formulas $\varphi(x_1, \ldots, x_d)$ and $\psi(x_1, \ldots, x_d)$ are \emph{logically equivalent} just in case for \emph{every} $t \in \N$, the $\mathcal{L}^+_{Pres}$-formulas $\varphi_t$ and $\psi_t$ are logically equivalent; in other words, for every $t \in \N$ and every $(k_1, \ldots, k_d) \in \Z^d$, $$\models \varphi_t(k_1, \ldots, k_d) \Leftrightarrow \models \psi_t(k_1, \ldots, k_d).$$

\end{definition}

It is clear that every $\mathcal{L}^+_{\textup{EQP}}$-formula is logically equivalent to some $\mathcal{L}_{\textup{EQP}}$-formula.

\begin{definition}
\label{def:pb}
  Given an $\mathcal{L}^+_{\textup{EQP}}$-formula $\varphi(x_1, \ldots, x_d, y)$ whose free variables are among $\{x_1, \ldots, x_d, y\}$ and $f \in \Z[t]$, a \emph{polynomially-bounded universal quantifier} applied to $\varphi$ yields $$\forall y \left[ 0 \leq y \leq f(t) \rightarrow \varphi(x_1, \ldots, x_d, y) \right]$$ for some $f \in \Z[t]$. Similarly, a \emph{polynomially-bounded existential quantifier} applied to $\varphi$ yields
\[\exists y\left[0\le y\le f(t)\ \wedge\ \varphi(x_1, \ldots, x_d, y) \right].\]

An \emph{$\mathcal{L}^+_{\textup{EQP}}$-formula with polynomially-bounded quantifiers} is a member of the smallest class of $\mathcal{L}^+_{\textup{EQP}}$-formulas containing all atomic formulas and closed under Boolean combinations and the formation of polynomially-bounded quantifiers.
\end{definition}

We recall the following theorem, which was proved in \cite{presburger_bounded_qe}:

\begin{thm}
\label{qe_main}
Every $\mathcal{L}^+_{\textup{EQP}}$-formula is logically equivalent to an $\mathcal{L}^+_{\textup{EQP}}$-formula with polynomially-bounded quantifiers.
\end{thm}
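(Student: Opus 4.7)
The plan is to prove Theorem \ref{qe_main} by induction on the construction of the $\mathcal{L}^+_{\textup{EQP}}$-formula. Atomic formulas are already quantifier-free, and Boolean combinations of formulas with polynomially-bounded quantifiers are again of that form (bounded quantifiers distribute over $\wedge$ and $\vee$ in an obvious way, so we can keep the bounded prefix intact). The whole work is in the quantifier step: given a formula $\exists y\, \varphi(\x,y)$ where $\varphi$ already has polynomially-bounded quantifiers, produce an equivalent formula with polynomially-bounded quantifiers only. By also inducting over the structure of $\varphi$ and pushing $\exists y$ past bounded quantifiers and disjunctions, I may assume $\varphi$ is a conjunction of atomic and negated atomic formulas (negations of $D_{f(t)}$ atoms can be absorbed by a polynomially-bounded disjunction over residues as well), and, by clearing denominators and combining the $y$-coefficients, I may assume every atomic subformula involving $y$ has the shape $c_k(t)\,y \le L_k(\x,t)$, $c_k(t)\,y \ge L_k(\x,t)$, or $D_{f_k(t)}\bigl(c_k(t)\,y - g_k(\x,t)\bigr)$, where the $c_k,f_k\in\Z[t]$ and $L_k,g_k$ are $\mathcal{L}^+_{\textup{EQP}}$-terms in $\x$.

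Next, I adapt Cooper's elimination procedure. Let $M(t)$ be a common multiple in $\Z[t]$ of all the $c_k(t)$ and $f_k(t)$ appearing in $\varphi$; then, by substituting $y' = (M(t)/c_k(t))\cdot\bigl(c_k(t)y\bigr)$ appropriately, I reduce to the case where every $y$-coefficient is the single polynomial $M(t)$. Classical Cooper then tells me $\exists y\, \varphi(\x,y)$ is equivalent (in standard Presburger, i.e.\ at each fixed $t$) to a disjunction of two kinds of statements: a ``$-\infty$'' branch, which ignores the lower bounds on $M(t)y$ and only needs to verify that the divisibility predicates admit \emph{some} residue class mod $\mathrm{lcm}(f_k(t))$, and finitely many ``witness'' branches where $y$ is taken to be the least integer with $M(t)y\ge L_k(\x,t)$ for each lower bound $L_k$, shifted by an offset $j$ chosen so that all divisibility constraints are satisfied; this offset $j$ ranges over residues modulo $\mathrm{lcm}_k(f_k(t))$.

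The crucial new step is that both the ``$-\infty$'' branch and the ``witness'' branches involve a case analysis over a residue $j$ in the range $0 \le j < N(t)$, where $N(t)\in\Z[t]$ is (a divisor of) $\mathrm{lcm}_k f_k(t)$. In classical Presburger $N$ is a constant, and Cooper writes out the disjunction; here $N(t)$ depends on $t$, so instead I introduce a fresh \emph{polynomially-bounded existential quantifier} $\exists j\,[0\le j\le N(t)\ \wedge\ \ldots]$ ranging over this residue. Inside the quantifier, the unique candidate witness $y^*(\x,j,t)$ is given by an explicit $\mathcal{L}^+_{\textup{EQP}}$-term (essentially ``$(\text{largest }L_k \text{ plus a correction of size } j)/M(t)$''), which must itself be expressed using a bounded quantifier because the required ceiling/floor operation again introduces a residue, this time modulo $M(t)$, another polynomial in $t$; that is handled by one more polynomially-bounded existential quantifier $\exists r\,[0\le r<M(t)\ \wedge\ \ldots]$. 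All the new quantifiers are polynomially bounded, and the quantifier-free matrix consists of $\mathcal{L}^+_{\textup{EQP}}$ inequalities and $D_{f(t)}$ atoms, so the output is an $\mathcal{L}^+_{\textup{EQP}}$-formula with polynomially-bounded quantifiers.

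The main obstacle I anticipate is bookkeeping, not ideas: one has to verify that the substitution clearing $y$-coefficients to a common polynomial $M(t)$, followed by Cooper-style case splits into ``$-\infty$'' and ``least witness'' branches, really does reduce to cases parameterised by a residue $0 \le j < N(t)$ with $N\in\Z[t]$, rather than by some residue whose modulus depends non-polynomially on $t$. This is delicate when several $c_k(t)$ and $f_k(t)$ share common factors that depend on $t$, and when some $c_k(t)$ or $f_k(t)$ may vanish for finitely many $t\in\N$ (including $t=0$). The vanishing can be handled by writing the final formula as a disjunction: for each of the finitely many ``bad'' $t$-values for which some denominator is $0$, include an explicit clause ``$t = t_0\ \wedge\ \psi_{t_0}(\x)$'' where $\psi_{t_0}$ is the standard Cooper-eliminated form at that fixed parameter value; and for all other $t$, use the uniform bounded-quantifier formula constructed above. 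Since ``$t=t_0$'' is itself an $\mathcal{L}_{\textup{EQP}}$ atom, the overall result is still an $\mathcal{L}^+_{\textup{EQP}}$-formula with polynomially-bounded quantifiers, completing the induction.
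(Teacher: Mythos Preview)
The paper does not prove Theorem~\ref{qe_main} in the text; it simply cites the companion paper \cite{presburger_bounded_qe}. Your sketch captures the central idea of that cited argument --- adapt Cooper's procedure and replace the finite disjunction over residues by a polynomially-bounded existential quantifier --- and agrees with the informal discussion surrounding Example~\ref{Ex:step1}.

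There is, however, a genuine gap in your inductive step. You claim that by ``pushing $\exists y$ past bounded quantifiers and disjunctions'' you may assume $\varphi$ is a conjunction of literals. This is valid for bounded \emph{existential} quantifiers (since $\exists y\,\exists j\,\theta \equiv \exists j\,\exists y\,\theta$) and for disjunctions, but it fails for bounded \emph{universal} quantifiers: $\exists y\,\forall j\,[0\le j\le f(t)\rightarrow\theta]$ is not equivalent to $\forall j\,[0\le j\le f(t)\rightarrow\exists y\,\theta]$. And bounded universals are unavoidable in your induction: once you eliminate an unbounded $\forall y_n$ via $\neg\exists y_n\neg$, the outermost negation turns the bounded existentials introduced by Cooper into bounded universals. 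At the next unbounded quantifier outward you are then faced with exactly $\exists y_{n-1}\,\forall \vec j\,[\ldots]$, and your reduction to a quantifier-free conjunction of literals no longer applies.

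The repair is to run Cooper with the bounded variables $\vec j$ treated as \emph{free} parameters when computing witness terms. For each fixed $t$, the matrix $\forall\vec j\,[\text{bounds}\rightarrow\chi(\x,y,\vec j)]$ is a finite conjunction of quantifier-free formulas indexed by $\vec j$ in a box of polynomial size; its Cooper witnesses for $y$ are therefore of the form $\lceil L_k(\x,\vec j^{\,*},t)/M(t)\rceil + r$ for some $\vec j^{\,*}$ in that box and some $0\le r<N(t)$. Hence $\exists y\,\varphi$ is equivalent to a formula with \emph{new} bounded existential quantifiers over $\vec j^{\,*}$ and $r$ (and over the remainder needed to express the ceiling), while the original bounded quantifiers on $\vec j$ are retained inside when the candidate is substituted back into $\varphi$. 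The $-\infty$ branch is handled analogously. This is the argument actually needed; your proposal gets the Cooper adaptation and the handling of degenerate $t$ right, but does not address the $\exists/\forall$ alternation that appears after the first round of elimination.
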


Also see \cite{presburger_bounded_qe} for a discussion on how this relates to similar previously-known results of Weispfenning and Lasaruk-Sturm \cite{lasaruksturmweakQE}.

\begin{ass}
In any first-order formula $\varphi$, we always assume that no variable occurs in $\varphi$ both as a free variable and as a quantified variable: that is, formulas such as $$( x > 0 ) \wedge (y > 0) \wedge \exists x \left[x + x = y \right]$$ are not allowed (we have to rename the quantified variable and replace the last conjunct by ``$\exists z \left[ z + z = y \right]$'').

\end{ass}

\subsection{Eventual quasi-polynomials and affine reductions}

Recall that EQPs were defined in Definition \ref{def:QP}.

\begin{definition}
\label{EQP_affine}
Given any $d' \times d$ matrix $A(t) = (f_{i,j}(t))$ of EQP functions and any $d'$-tuple $(g_1(t), \ldots, g_{d'}(t))$ of EQP functions, we call the function $F: \Z^d \times \N \rightarrow \Z^{d'}$ given by the rule $$F(x_1, \ldots, x_d, t) = A(t) \cdot (x_1, \ldots, x_d)^T + (g_1(t), \ldots, g_{d'}(t))^T$$ an \emph{EQP-affine function}.
\end{definition}

\begin{definition}
\label{EQP_reduction}
Suppose that $S_t \subseteq \Z^d$ and $S'_t \subseteq \Z^{d'}$ are parametric Presburger families, defined by $\mathcal{L}^+_{\textup{EQP}}$-formulas $\varphi(\x)$ and $\varphi'(\z)$ respectively. We say that $S'_t$ is \emph{affine reducible to} $S_t$ if there is an EQP-affine function $F: \Z^{d'} \times \N \rightarrow \Z^{d}$ such that for every $t \in \Z$, if $F_t$ is the function obtained from $F$ by fixing the value of $t$ for the $(d'+1)$-th coordinate, then $F_t \upharpoonright S'_t$ is a bijection from $S'_t$ onto $S_t$.

\end{definition}

Abusing notation, we will not always distinguish carefully between an $\mathcal{L}^+_{\textup{EQP}}$-definable family and the parametric Presburger set that it defines, and we may say ``$\varphi'$ is affine reducible to $\varphi$'' with the obvious meaning.

The key property of affine reductions for our approach is that they preserve Properties 1, 2, and 3:

\begin{remark}
\label{property_preservation}
If $S_t \subseteq \Z^d$ and $S'_t \subseteq \Z^{d'}$ are families parametrized by $t \in \N$ and $S'_t$ is affine reducible to $S_t$, then if $S'_t$ has any of the Properties 1, 2, or 3, then $S_t$ has the same Properties.
\end{remark}

\begin{proof}
The preservation of Properties 1 and 2 is trivial, since $F$ induces bijections from the sets $S'_t$ onto the $S_t$. For Property 3, if $F: \Z^{d'} \times \N \rightarrow \Z^d$ gives an affine reduction from $S'_t$ to $S_t$ and $\vec x : \N \rightarrow \Z^{d'}$ is an EQP-affine function such that $\vec x(t) \in S'_t$ (whenever $S'_t \neq \emptyset$), then the composition $F \circ \vec x : \N \rightarrow \Z^d$ is an EQP function yielding points in $S_t$ whenever this is nonempty.
\end{proof}

\section{A sequence of reductions}
\label{sec:proof}
In this section we give the details of the six steps of our procedure as outlined in Section \ref{sec:outline}. This will prove Properties 1, 2, and 3 in Theorem \ref{main}. 

\subsection{Step 1}

This is the content of Theorem \ref{qe_main}, proved in \cite{presburger_bounded_qe}.

\subsection{Step 2}
Next we describe an affine reduction which eliminates the divisibility predicates $D_{f(t)}$ from an $\mathcal{L}^{+}_{\textup{EQP}}$ formula, but at the cost of adding additional bounded quantifiers and new variables, cf. Example \ref{Ex:step2}:

\begin{lem}
\label{step_2}
Let $S_t \subseteq \Z^d$ be a parametric Presburger family defined by an $\mathcal{L}^+_{\textup{EQP}}$-formula with polynomially-bounded quantifiers. Then there is a parametric Presburger family $S'_t \subseteq \Z^{d'}$ which is definable by an $\mathcal{L}_{EQP}$-formula with polynomially-bounded quantifiers and such that $S'_t$ is affine reducible to $S_t$.
\end{lem}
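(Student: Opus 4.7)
Following Example \ref{Ex:step2}, the idea is to substitute each free variable $x_i$ by an expression $g(t) u_i + v_i$, where $u_i\in\Z$ is unbounded and $v_i$ is restricted to $0\le v_i<g(t)$, and $g(t)\in\Z[t]$ is a nonnegative polynomial chosen as a common multiple of all divisibility moduli appearing in the defining formula $\varphi$. Under this substitution each divisibility predicate can be ``collapsed'' onto the bounded coordinates $v_i$, producing a new predicate applied to a polynomially-bounded expression; that can then be unwound into a polynomially-bounded existential quantifier in the pure language $\mathcal L_{\textup{EQP}}$. The map $F(\vec u,\vec v,t)=g(t)\vec u+\vec v$ is the required EQP-affine reduction.

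Concretely, first collect the finite list $f_1(t),\ldots,f_n(t)\in\Z[t]$ of polynomials appearing as subscripts of divisibility predicates in $\varphi$, and set $g(t)=f_1(t)^2\cdots f_n(t)^2$. Then $g(t)\ge 0$ for every $t$, $g(t)>0$ for sufficiently large $t$ (each $f_j$ has only finitely many integer zeros), and $f_j(t)\mid g(t)$ whenever $f_j(t)\ne 0$. Introduce new variables $u_1,v_1,\ldots,u_d,v_d$ and transform $\varphi$ atom-by-atom. A linear atom $\vec a(t)\cdot\vec x\le b(t)$ becomes $g(t)\vec a(t)\cdot\vec u+\vec a(t)\cdot\vec v\le b(t)$, still an $\mathcal L_{\textup{EQP}}$-atom. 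A divisibility atom $D_{f_j(t)}\bigl(\vec a(t)\cdot\vec x+\vec b(t)\cdot\vec y+c(t)\bigr)$ (where $\vec y$ denotes any surrounding polynomially-bounded quantified variables) becomes, after substitution and the simplification $f_j(t)\mid g(t)\vec a(t)\cdot\vec u$, equivalent to $D_{f_j(t)}\bigl(\vec a(t)\cdot\vec v+\vec b(t)\cdot\vec y+c(t)\bigr)$. Since the $v_i$ are bounded by $g(t)$ and the $y_j$ by polynomials in $t$, the expression inside has absolute value bounded by some $C(t)\in\Z[t]$, so I may replace the divisibility by the polynomially-bounded existential
\[\exists z\bigl[0\le z\le 2C(t)\ \wedge\ f_j(t)\cdot(z-C(t))=\vec a(t)\cdot\vec v+\vec b(t)\cdot\vec y+c(t)\bigr].\]
Adding the constraints $0\le v_i\le g(t)-1$ for each $i$ to the resulting formula $\varphi'$ yields an $\mathcal L_{\textup{EQP}}$-formula with polynomially-bounded quantifiers whose solution set is $S'_t\subseteq\Z^{2d}$.

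To see that $S'_t$ is affine reducible to $S_t$ for large $t$, observe that the Euclidean division $\vec x\mapsto(\vec u,\vec v)$ with $\vec x=g(t)\vec u+\vec v$ and $0\le v_i<g(t)$ is a bijection $\Z^d\to \Z^d\times\prod_i\{0,\ldots,g(t)-1\}$, and by the equivalences above it sends $S_t$ to $S'_t$. The inverse is the EQP-affine map $F(\vec u,\vec v,t)=g(t)\vec u+\vec v$, whose coefficients are the polynomial $g(t)$ and constants. Thus the definition of EQP-affine reduction (Definition \ref{EQP_reduction}) is satisfied for sufficiently large $t$.

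The one bookkeeping obstacle is the finite set of small $t$ where $g(t)=0$ (so $\{0\le v_i<g(t)\}$ is empty) or where some $f_j(t)=0$ (so $D_{f_j(t)}$ is identically false and the simplification $f_j\mid g$ is invalid). Here I exploit the flexibility built into the notion of EQP: on this finite exceptional set I redefine $S'_t$ to be $\{\vec 0\}^d\times S_t\subseteq\Z^{2d}$ and take $F_t$ to be projection onto the last $d$ coordinates, which is an affine map whose coefficients only need to agree with the generic ones for large $t$ (and may therefore be chosen EQP). Since there are only finitely many such exceptional $t$, the coordinate functions of $F$ remain EQP, and the bijection condition holds for every $t$. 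The main conceptual point — already visible in Example \ref{Ex:step2} — is just that restricting the ``low-order digits'' $v_i$ to a box of width $g(t)$ absorbs the divisibility constraints into a polynomially-bounded search, and the rest of the argument is bookkeeping of coefficients.
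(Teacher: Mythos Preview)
Your proposal is correct and follows essentially the same approach as the paper: introduce a common multiple $g(t)$ of the divisibility moduli, substitute $g(t)u_i+v_i$ with $0\le v_i<g(t)$, collapse each $D_{f_j}$ onto the bounded variables, and replace it by a polynomially-bounded existential. The only cosmetic differences are that the paper takes $g=\prod f_j$ (after arranging each $f_j$ eventually positive) rather than your $\prod f_j^2$, and that the paper substitutes for \emph{all} variables (free and quantified) and handles the quantifier case recursively, whereas you substitute only for the free variables and use the existing polynomial bounds on the quantified $y_j$ directly---a slight streamlining of the same idea.
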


\begin{proof}
Fix some $\mathcal{L}^+_{\textup{EQP}}$ formula $\varphi(\x)$ with polynomially-bounded quantification which defines the family $S_t$. Let $f_1, \ldots, f_r$ be all the functions in $\Z[t]$ which occur in divisibility predicates $D_{f_i(t)}$ in $\varphi$. Without loss of generality, none of the $f_i$'s is the zero polynomial (since formulas $D_0(s)$ are trivially false), so $f_i(t)$ is nonzero for sufficiently large $t$. Since $D_f$ and $D_{-f}$ are logically equivalent, we may assume that $f_i(t)$ is eventually positive. Let $g \in \Z[t]$ be the product $f_1f_2\cdots f_r$. Then for sufficiently large $t \in \N$, the value $g(t)$ is a positive common multiple of $f_1(t), \ldots, f_r(t)$; there are only finitely many values of $t$ at which one of $f_1(t), \ldots, f_r(t)$ is nonpositive, and an EQP-affine reduction can handle these cases separately.

Let $z_1, \ldots, z_m$ list all variables occurring in $\varphi$, both free and quantified. Let $u_1, \ldots, u_m$ and $v_1, \ldots, v_m$ be a set of new variables. Let $F: \Z^{2m} \times \N \rightarrow \Z^m$ be the EQP-affine function $$F(u_1, \ldots, u_m, v_1, \ldots, v_m, t) = (g(t) \cdot u_1 + v_1, \ldots, g(t) \cdot u_m + v_m).$$ We will now define a formula $\varphi'$ whose free variables are among $u_1, \ldots, u_m,$ $v_1, \ldots, v_m$ and show that this is affine reducible to $\varphi$ ``via $F$'': this means that the restriction of $F$ to the set defined by $\varphi'$, ignoring any $u_i$ or $v_i$ which does not occur free, yields a bijection onto the set $S_t$ defined by $\varphi$.

In fact, we will recursively construct $\mathcal{L}_{\textup{EQP}}$-formulas $\psi'$ for every subformula $\psi$ of $\varphi$, starting with the atomic subformulas, such that every $\psi'$ has polynomially-bounded quantification and the parametric Presburger family defined by $\psi$ is affine reducible to that defined by $\psi'$ via the map $F$, and the family $S'_t$ that we seek will be that defined by $\varphi'$.

\medskip

The formula $\psi'$ will be defined as $\theta \wedge \psi''$ where $\theta$ is the conjunction $$\bigwedge_{i = 1}^m \left[ 0 \leq v_i < g(t) \right]$$ and the formula $\psi''$ is defined by recursion. As we define the formula $\psi''$, we will also show inductively that the following holds:

\medskip

$(*)$ For any sequence of values $(\A, \B) = (a_1, \ldots, a_m, b_1, \ldots, b_m)$ which satisfies $\theta$, we have $\models (\psi'')_t(\A, \B)$ if and only if $\models \psi_t(F(\A, \B, t))$.

\medskip

Note that $(*)$ immediately implies that $\psi'$ is affine reducible to $\psi$ via $F$.

\medskip

$\bullet$ For any term $s$ in the variables $z_1, \ldots, z_m$, let $s( g \cdot \U + \V / \z)$ be the result of replacing each instance of the variable $z_i$ by $g \cdot u_i + v_i$. Then if $\psi$ is an atomic formula of the form $s_1 = s_2$, we let $\psi''$ be the atomic formula $$s_1( g \cdot \U + \V / \z) = s_2( g \cdot \U + \V / \z).$$ If $\psi$ is an atomic formula of the form $s_1 \leq s_2$, then $\psi''$ is $$s_1( g \cdot \U + \V / \z) \leq s_2( g \cdot \U + \V / \z).$$ It is clear that in this case that $(*)$ holds.

\medskip

$\bullet$ For any atomic formula $\psi$ of the form $D_{f_j(t)}(s)$ (where $s$ is a term), let $h_s(t)\in\Z[t]$ be such that $\abs{s^t(a_1, \ldots, a_m)} \leq h_s(t)$, for all $t\in\N$ and all $a_1,\ldots,a_m$ with $0\le a_i<g(t)$ (since, for fixed $t$, $s$ is linear in $a_1, \ldots, a_m$, we only need check the bound on the finite set of $2^d$ EQPs obtained by setting each $a_i$ to $0$ or to $g(t)$, in the term $s(a_1, \ldots, a_m)$). Let $s(\V / \z)$ be the result of replacing each instance of each variable $z_i$ by $v_i$. Now let $\psi''$ be the formula $$\exists y \left[0\leq y \leq h_s(t) \ \wedge \ \big(f_j(t) \cdot y = s(\V / \z) \ \vee\ f_j(t)\cdot (-y)= s(\V / \z)\big)\right],$$ where $y$ is a new variable. Note that $\psi''$ has polynomially-bounded quantification. The hypothesis $(*)$ asserts that checking that the term $s$ applied to the $g(t) \cdot u_i + v_i$ is divisible by $f_j(t)$ is equivalent to checking the divisibility of $s$ applied to the $v_i$: this is true because $g(t)$ is divisible by $f_j(t)$.

\medskip

$\bullet$ $(\psi_1 \wedge \psi_2)'' = \psi_1'' \wedge \psi_2''$, $(\neg \psi)'' = \neg (\psi'')$, $(\psi_1 \vee \psi_2)'' = \psi_1'' \vee \psi_2''$, and $(\psi_1 \rightarrow \psi_2)'' = \psi_1'' \rightarrow \psi_2''$. It is routine to check that if $(*)$ holds for the formulas $\psi_i$ and $\psi''_i$, then they continue to hold for the Boolean combinations above. Furthermore, if the formulas $\psi_i''$ have bounded quantification, then so do all of these Boolean combinations.

\medskip

$\bullet$ If $\psi$ is a formula of the form $\exists z_i \left[ 0 \leq z_i \leq h(t) \wedge \theta \right]$ (with polynomially-bounded quantification on the outside), then we define $\psi''$ as $$\exists u_i [ 0 \leq u_i \leq h(t) \ \wedge$$ $$ \exists v_i \left[\left(0 \leq v_i < g(t) \right) \wedge \left(0 \leq g(t) \cdot u_i + v_i \leq h(t)\right) \wedge \theta''(g(t) \cdot u_i + v_i / z_i) \right] ],$$ where ``$\theta''(g \cdot u_i + v_i / z_i)$'' means the result of substituting each instance of the free variable $z_i$ in the formula $\theta$ with the term $g \cdot u_i + v_i$. Again, the new formula $\psi''$ has polynomially-bounded quantification under the inductive hypothesis that $\theta''$ does, and $(*)$ is also immediate by induction.
\end{proof}

\begin{remark}
We note in passing that the same argument can be applied to $\mathcal{L}_R$-definable families (in the language of \cite{presburger_bounded_qe}) for any ring $R$ of $\Z$-valued functions, with only minor modifications. This gives us the following: for any $R$-parametric Presburger family $S_t \subseteq \Z^d$, there is an $R$-parametric Presburger family $S'_t \subseteq \Z^{d'}$ which is definable by an $\mathcal{L}_R$-formula with $R$-bounded quantifiers and such that $S'_t$ is $R$-affine reducible to $S_t$.
\end{remark}

\subsection{Step 3}
This next step involves writing the variables of our formula in ``base $t$,'' cf. Example \ref{Ex:step3}. To do this, it is most convenient to show that we can reduce to the case where the free variables are nonnegative (by Definition \ref{def:pb} for polynomially-bounded quantifiers, the quantified variables are already nonnegative). Indeed, if our free variables are among $(x_1,\ldots,x_d)$, we partition $\Z^d$ into $2^d$ pairwise disjoint regions $X_1, \ldots, X_{2^d}$ according whether each free variable $x_i$ is negative or nonnegative. Now let $S^k_t := X_k \cap S_t$ for each $k \in \{1, \ldots, 2^d\}$, and note that $S^k_t$ is a parametric Presburger family which is EQP-affine reducible to a parametric Presburger family living in $\N^d$ (by inverting the values of negative coordinates). Then $S_t$ is the disjoint union of these $S^k_t$, and the following remark shows that we may examine each $S^k_t$ separately.

\begin{remark}
\label{disj_union}
If we can write $S_t$ as a union $S^1_t \cup \cdots \cup S^k_t$ of pairwise disjoint parametric Presburger families, then to prove that $S_t$ satisfies Properties 1, 2, and 3 (as in Theorem~\ref{main}), it is sufficient to prove these for each $S^i_t$ separately. In particular, Property 1 follows because the union of eventually periodic sets is eventually periodic, Property 2 follows because the sum of EQP's is an EQP, and Property 3 follows because we may specify an $\x(t)$ from any of the (nonempty) sets in the disjoint union.
\end{remark}

The following is the main Proposition encompassing Step 3:

\begin{prop} \label{quantifier_separation}
Let $S_t \subseteq \N^d$ be a parametric Presburger family which is definable by an $\mathcal{L}_{\textup{EQP}}$-formula with polynomially-bounded variables $y_1, \ldots, y_m$. Then there is an EQP-parametric Presburger family $S'_t$ that is EQP-affine reducible to $S_t$ and such that $S'_t$ is definable by a $\mathcal{L}_{\textup{EQP}}$-formula $\varphi(\A, \z)$ whose free variables are among the variables $\A$ and $\z$ and whose quantified variables are listed in the finite tuple $\B$ (which is assumed to be disjoint from $\A$ and $\z$), and satisfying the following conditions:

\begin{enumerate}
\item The atomic subformulas of $\varphi$ are all of one of two types: 
\begin{enumerate}

\item Type 1, which are inequalities involving only variables in $\A$, $\B$, and $t$, and which do not involve any multiplication by $t$;
\item Type 2, which are inequalities involving only $t$ and the free variables (from $\A$ or from $\z$);
\end{enumerate}

\item Any free variable from $\A$ is explicitly bounded between $0$ and $t-1$ (inclusive) by an atomic subformula of $\varphi$ which is outside the scope of any quantifier or disjunction; and

\item Any quantified variable from $\B$ is bounded between $0$ and $t-1$ (inclusive).

\end{enumerate}
\end{prop}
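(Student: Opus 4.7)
The plan is to apply the \emph{base $t$} method of Chen, Li, and Sam illustrated in Example~\ref{Ex:step3}. I start from the $\mathcal{L}_{\textup{EQP}}$-formula $\varphi(\x)$ defining $S_t$, with free variables $x_1,\ldots,x_d$ and polynomially-bounded quantifiers on $y_1,\ldots,y_m$, each satisfying $0\le y_i\le h_i(t)$ for some $h_i\in\Z[t]$. I choose $k$ so that $h_i(t)<t^k$ for every $i$ and every sufficiently large $t$ (the finitely many small $t$ are handled separately via Remark~\ref{disj_union}), and let $\ell$ be the maximum degree of any polynomial coefficient appearing in an atomic subformula of $\varphi$. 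I introduce new variables $b_{ij}$ ($0\le j\le k-1$), $a_{ij}$ ($0\le j\le k+\ell-1$), and $z_i$, and make the substitution
\[ y_i = \sum_{j=0}^{k-1} b_{ij}\,t^j, \qquad x_i = z_i\,t^{k+\ell} + \sum_{j=0}^{k+\ell-1} a_{ij}\,t^j, \]
together with the bounds $0\le a_{ij},b_{ij}\le t-1$ and $z_i\ge 0$. Uniqueness of base-$t$ expansion makes this an EQP-affine bijection from the resulting family $S'_t$ onto $S_t$. The bounds $0\le a_{ij}\le t-1$ are placed outside every quantifier and disjunction to meet condition~(2), and the replacement quantifiers $\exists b_{ij}[0\le b_{ij}\le t-1\wedge\cdots]$ meet condition~(3).

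After substitution each atomic subformula $\vec a(t)\cdot\x\le b(t)$ becomes $P(t,\A,\B,\z)\le 0$, where $P$ is a polynomial in $t$ whose coefficient of each power of $t$ is a $\Z$-linear form in $\A,\B,\z$ plus an integer constant coming from $b(t)$. The key structural observation is that the $\z$-variables enter $P$ only through $z_i\,t^{k+\ell}$ multiplied by a polynomial of degree at most $\ell$: the coefficient of $t^j$ in $P$ is a $\Z$-linear form in $\A,\B$ alone whenever $j<k+\ell$, and dually for $j\ge k+\ell$ the coefficient lives in $\A,\z$ but never in $\B$.

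I now apply the iterative Chen--Li--Sam reduction to each atomic inequality. Write $P=tQ+R_0$, where $R_0$ is the $t^0$-coefficient; since every $a_{ij},b_{ij}\in[0,t-1]$, one has $|R_0|\le Ct$ for a constant $C$ depending only on $\varphi$, so $c:=\lceil R_0/t\rceil$ takes only finitely many integer values. For each possible value $c_0$, the case ``$c=c_0$'' is cut out by the pair of Type~1 atomic inequalities $(c_0-1)t+1\le R_0$ and $R_0\le c_0 t$; these are Type~1 because $R_0$ is $\Z$-linear in $\A,\B$ with $t$-free coefficients and contains no $\z$. Using the integer identity $tQ+R_0\le 0 \Leftrightarrow Q+\lceil R_0/t\rceil\le 0$, the inequality $P\le 0$ is equivalent inside this case to $Q+c_0\le 0$, a polynomial inequality of strictly smaller $t$-degree and of the same shape. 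Iterating $k+\ell$ times produces an inequality whose $t^0$-coefficient equals the $t^{k+\ell}$-coefficient of the original $P$ plus an accumulated integer; by the structural observation this remaining inequality involves only $\A,\z,t$, i.e., is Type~2. Equalities $\vec a(t)\cdot\x=b(t)$ are handled identically.

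Finally, I replace each atomic subformula by the disjunction over its finitely many cases, close under the original Boolean connectives and polynomially-bounded quantifiers, and relocate the $0\le a_{ij}\le t-1$ clauses to the outermost level. The main technical delicacy is simply the bookkeeping within the iteration: at every stage one must verify that the current $R_0$ remains $\Z$-linear in $\A,\B$ with coefficients independent of $t$ (so that no generated Type~1 case condition ever multiplies a bounded variable by $t$), and that after exactly $k+\ell$ iterations the $\B$-variables have vanished from the surviving inequality. Both facts are consequences of the structural observation, which is the heart of the argument.
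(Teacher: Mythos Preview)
Your proposal is correct and follows essentially the same approach as the paper: the paper factors the argument into two lemmas (one for the base-$t$ substitution establishing the structural observation on which powers of $t$ carry $\B$- versus $\z$-variables, one for the single-step degree reduction via case-splitting on the constant term), and then iterates the second lemma $k+\ell$ times exactly as you do. Your choice of $\ell$ as the maximal degree over \emph{all} polynomial coefficients (rather than only those multiplying the $y_i$) is a harmless overestimate, and your presentation of the case split via $\lceil R_0/t\rceil$ is equivalent to the paper's version.
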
 

In order to prove Proposition~\ref{quantifier_separation}, we adapt the approach of Chen, Li, and Sam in~\cite{ChLiSam} of writing each variable ``base $t$.'' We cannot directly apply their results because of the interaction between the potential unboundedness of the free variables and the existence of (bounded) quantified variables. Specifically, their key Lemma 3.2 uses sets defined only by equalities and nonnegativity constraints. We need to work with general linear inequalities and cannot apply the usual procedure of introducing slack variables to convert them into equalities because of the presence of quantifiers.

For example, consider the set $S$ given by 
$$ S = \{ x \in \Z: x \geq 0 \wedge \exists y \left[ (y \geq 0) \wedge ( x + y \leq 2 )\right] \} \; = \; \{ 0,1,2 \} .$$
Since $x$ and $y$ are nonnegative, we could introduce a slack variable $w$ to obtain the set $S' \subset \Z^2$ given by
$$ S' = \{(x,w): x \geq 0 \wedge w \geq 0 \wedge \exists y \left[(y \geq 0) \wedge ( x+y+w = 2) \right] \} $$
$$ = \; (0,0), (0,1), (0,2), (1,0), (1,1), (2,0) \},$$

but obviously $S$ and $S'$ are not in bijection. 

\begin{lem} \label{base_t}
  For every parametric Presburger family $S'_t$ defined by an $\mathcal{L}_{\textup{EQP}}$-formula $\varphi'$ with polynomially-bounded quantifiers, there is an $M \geq 0$ and a parametric Presburger family $S_t''$ which is affine reducible to $S_t$ and is definable by an $\mathcal{L}_{\textup{EQP}}$-formula $\varphi''$ that has the following properties:
  \begin{enumerate}
  \item The quantified variables $\{b_{ij}\}$ in $\varphi''$ all satisfy
    $0 \leq b_{ij} \leq t-1$.
  \item The free variables in $\varphi''$ are of two types: $\{a_{ij}\}$ with $0 \leq a_{ij} \leq t-1$ and $\{z_i\}$ with no a priori bound.
  \item $\varphi''$ has no equalities and every inequality (other than the bounds on the variables) takes the form
    \begin{equation} \label{f_inequality} \sum_{s \geq 0} f_s t^s \leq 0 \end{equation}
 where no $f_s$ contains $t$, $f_s$ does not contain any unbounded free variables if $s < M$, and $f_s$ does not contain any quantified variables if $s \geq M$.
    \end{enumerate}  
\end{lem}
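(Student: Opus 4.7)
The plan is to implement the base-$t$ expansion sketched in the outline (see Example \ref{Ex:step3}), choosing the degrees carefully so that every atomic inequality becomes a polynomial identity in $t$ whose coefficients exhibit the required separation between quantified and unbounded variables.

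First I would preprocess $\varphi'$ by replacing every equality $s_1 = s_2$ by the conjunction $(s_1 \le s_2) \wedge (s_2 \le s_1)$, so that every atomic subformula of $\varphi'$ is an inequality. Next, pick $k \in \N$ large enough that each polynomial bound $h_i(t)$ on a polynomially-bounded quantified variable satisfies $h_i(t) < t^k$ for all sufficiently large $t$ (finitely many small-$t$ exceptions are handled separately, as the EQP framework is unaffected by a finite modification), and let $\ell$ be the maximum degree of any polynomial coefficient occurring in the atomic subformulas of $\varphi'$. Set $M := k + \ell$.

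Now I would substitute, for each quantified variable $y_i$, the expression $y_i \mapsto \sum_{j=0}^{k-1} b_{ij} t^j$, replacing the bound $0 \le y_i \le h_i(t)$ by the conjunction $\bigwedge_{j=0}^{k-1}(0 \le b_{ij} \le t-1) \wedge \sum_j b_{ij} t^j \le h_i(t)$; the $b_{ij}$ thus become the new, polynomially-bounded quantified variables giving condition 1. For each free variable $x_i$, substitute $x_i \mapsto z_i t^M + \sum_{j=0}^{M-1} a_{ij} t^j$, where $z_i$ is a new unbounded free variable and the $a_{ij}$ are new free variables whose bounds $0 \le a_{ij} \le t-1$ are added as outermost atomic conjuncts of $\varphi''$, giving condition 2. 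Because the base-$t$ representation $x_i = z_i t^M + \sum a_{ij} t^j$ with $0 \le a_{ij} < t$ is unique for every $x_i \in \Z$ (allowing $z_i$ to be any integer via the division algorithm), the induced EQP-affine map on the new variables is a bijection from the set defined by $\varphi''$ onto $S'_t$, which provides the required affine reduction.

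Finally I would expand each atomic inequality after substitution as a polynomial in $t$ and collect powers: an inequality $\vec{a}(t)\cdot(\x,\y) \le c(t)$ becomes $\sum_{s \ge 0} f_s t^s \le 0$, where each $f_s$ is an integer linear combination of the new variables plus an integer constant (and in particular contains no $t$). The coefficient separation follows from a short book-keeping argument: a quantified variable $b_{ij}$ (with $j \le k-1$) multiplied by a polynomial of degree at most $\ell$ contributes only to $f_s$ with $s \le (k-1)+\ell = M-1$, while $z_i$ is multiplied by $t^M$ and hence contributes only to $f_s$ with $s \ge M$. The extra bound inequalities $\sum_j b_{ij} t^j \le h_i(t)$ fit the required shape trivially, since their only $t$-dependence comes from the constant polynomial $h_i(t)$. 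The main obstacle is bookkeeping rather than any conceptual difficulty: one must preserve the Boolean and quantifier structure of $\varphi'$ through the substitution, verify the degree accounting at every atomic inequality, and confirm that the choice $M = k+\ell$ simultaneously accommodates the top-degree contributions of the quantified variables and the bottom-degree contributions of the unbounded variables.
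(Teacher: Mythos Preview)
Your proposal is correct and follows essentially the same approach as the paper: introduce base-$t$ digits $b_{ij}$ for the quantified variables and $a_{ij}$ (plus an unbounded top part $z_i$) for the free variables, then read off the separation of quantified versus unbounded variables from the degree bookkeeping. The only cosmetic differences are that you take $\ell$ to be the maximum degree of \emph{any} coefficient polynomial (the paper uses only the coefficients of the $y_i$, which gives a possibly smaller but equally valid $M$) and that you are more explicit about retaining the original quantifier bound $\sum_j b_{ij}t^j \le h_i(t)$ as an additional inequality---a point the paper handles somewhat informally.
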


\begin{proof}
 Let $\{x_1, \dots, x_d\}$ be the free variables and $\{y_1,\dots,y_m\}$ the quantified variables in $\varphi'$.  Let $k \in \N$ be such that $0 \leq y_i \leq t^k -1$ for every $i$ and sufficiently large $t$ (such a $k$ is obtainable, because each $y_i$ is polynomially-bounded; the finite set of $t$ for which the bound doesn't hold may be handled individually). Also let $\ell$ be the highest power of $t$ that is multiplied by any $y_i$ in $\varphi'$. Define a $\Z[t]$-affine (hence \textup{EQP}-affine) function $G: \Z^{d \times (k+\ell+1)} \to \Z^d$ by
 $$ \begin{bmatrix} x_1 \\ \vdots \\ x_d \end{bmatrix} = 
  G \left( \begin{bmatrix}  a_{1,0} & \dots & a_{1,k+\ell-1} & z_1  \\
    \vdots &    & \vdots & \vdots \\
     a_{d,0} & \dots & a_{d,k+\ell-1} & z_d
  \end{bmatrix} \right) =
  \begin{bmatrix} \sum_{j=0}^{k + \ell - 1} t^j a_{1,j} +  t^{k+\ell} z_1 \\
    \vdots \\ \sum_{j=0}^{k + \ell - 1} a_{d,j}t^j + t^{k+\ell} z_d \end{bmatrix}  $$

  That is, we write each $x_i$ base $t$ (in the sense of \cite{ChLiSam}) up to the power $t^{k+\ell-1}$ and write the remaining part of $x_i$ as $t^{k + \ell}z_i$. So there is a unique inverse image of any point $(x_1, \dots x_d)$ that satisfies the inequalities $0 \leq a_{i,j} \leq t-1, 0 \leq z_i$ for $i=1, \dots, d$. We define $S_t''$ to be the set of these preimages, so that $G$ is an affine reduction of $S_t''$ to $S_t'$.
  
  Next, in the formula defining $S_t''$, replace each $y_i$ by $\sum_{j=0}^{k-1} b_{i,j} t^j$ and the bounded quantifier $\exists y_i: \; 0 \leq y_i \leq t^k - 1$ by
  the series of bounded quantifiers $\exists b_{i,0}, \dots, b_{i,k-1}: \; 0 \leq b_{i,0}, \dots, b_{i,k-1} \leq t-1$. Let $\varphi''$ be the resulting formula. 

  Since each unbounded free variable $z_i$ is multiplied by $t^{k + \ell}$ each time it is introduced, the condition on terms of $t$-degree lower than $k + \ell$ is immediate. Also each bounded variable $b_{i,j}$ appears by replacing $y_i$ by $b_{i,j} t^j$; thus the powers of $t$ that are multiplied by $b_{i,j}$ in $\varphi''$ are $j$ more than the powers of $t$ that are multiplied by $y_i$ in $\varphi'$. By the choice of $k$ and $\ell$, if we let $M = k + \ell$ then the final condition is also satisfied. 
\end{proof}

\begin{lem} \label{degree_reduction}
  Suppose $\sum_s f_s t^s \leq 0$ is an atomic subformula of an $\mathcal{L}_{\textup{EQP}}$-formula $\varphi''$ as in the conclusion of Lemma~\ref{base_t}, and further assume that $f_0$ does not contain any of the unbounded free variables $z_1, \dots, z_d$. Then for $t \gg 0$, $\sum_s f_s t^s \leq 0$ is logically equivalent to a Boolean combination of:
  \begin{itemize}
  \item inequalities in Presburger arithmetic (allowing $t$ as an ordinary Presburger variable), and
  \item parametric Presburger inequalities of the form $\sum_s f'_s t^s \leq 0$ such that $f_s' = f_{s+1}$ for $s \geq 1$ and $f_0' = f_1 + h$ where $h$ is constant. 
  \end{itemize}
 \end{lem}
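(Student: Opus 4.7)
The plan is to carry out the ``divide by $t$'' maneuver illustrated in Example~\ref{Ex:step3}. Write the given atomic inequality as $tu + f_0 \le 0$ where $u := \sum_{s \ge 0} f_{s+1}t^s$. Under the hypotheses inherited from Lemma~\ref{base_t}, together with the extra assumption that $f_0$ contains no unbounded free variable, $f_0$ is a $\Z$-linear combination of the bounded variables $a_{ij}, b_{ij}$ plus an integer constant, with coefficients that do not involve $t$. Hence there is a constant $C$, depending only on the (fixed) coefficients of $f_0$, such that $|f_0| \le C(t-1) + C$ whenever each $a_{ij}, b_{ij} \in [0, t-1]$.

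For $t > 0$, the equivalence $tu + f_0 \le 0 \iff u + \lceil f_0/t \rceil \le 0$ is an elementary fact about integer division. By the bound above, for $t$ sufficiently large the integer $j := \lceil f_0/t \rceil$ is forced to lie in a bounded range $\{-C', \ldots, C'\}$ depending only on $C$. I would then partition the situation into these finitely many cases. The condition ``$\lceil f_0/t \rceil = j$'' is exactly $(j-1)t + 1 \le f_0 \le jt$, a conjunction of two linear inequalities in the bounded variables and $t$ with integer coefficients, and hence a formula in ordinary Presburger arithmetic with $t$ treated as a Presburger variable.

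Inside the case $\lceil f_0/t \rceil = j$, the original inequality collapses to $u + j \le 0$, which is $\sum_{s \ge 0} f'_s t^s \le 0$ with $f'_0 = f_1 + j$ and $f'_s = f_{s+1}$ for $s \ge 1$ --- precisely the shape specified in the conclusion. Taking the disjunction over the finitely many admissible values of $j$, with each disjunct being the conjunction of the Presburger case-condition and the reduced parametric Presburger inequality, yields the required Boolean combination, valid for $t$ sufficiently large.

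The main obstacle, such as it is, is bookkeeping: one must check that the range of possible values of $j$ truly depends only on the fixed integer coefficients of $f_0$ and not on $t$, and that the case-defining inequalities contain no multiplication of $t$ by a quantified variable (so that they belong to Presburger arithmetic as claimed). Both facts follow directly from the hypotheses that $f_0$ has no $t$, no unbounded free variable, and that the remaining variables of $f_0$ are explicitly bounded in $[0,t-1]$; no computation beyond elementary floor/ceiling manipulation is required.
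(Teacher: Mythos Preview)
Your argument is correct and is essentially the same as the paper's proof: both bound $f_0$ by a constant multiple of $t$ using the $[0,t-1]$ bounds on all variables appearing in $f_0$, split into finitely many cases according to the interval $(h-1)t < f_0 \le ht$ (equivalently, the value of $\lceil f_0/t\rceil$), and observe that in each case the original inequality reduces to $u+h\le 0$. The only difference is cosmetic---you phrase the reduction via the identity $tu+f_0\le 0 \iff u+\lceil f_0/t\rceil\le 0$, while the paper works directly with the interval condition---but the content is identical.
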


\begin{proof}
  Since $f_0$ does not contain any of the variables $z_1, \dots z_d$, write
  \begin{equation} f_0 = c_0 + \sum_{i=1}^d \sum_{j=o}^{k+\ell-1} c_{i,j}a_{i,j}  + \sum_{i=1}^m \sum_{j=0}^{k-1} c'_{i,j}b_{i,j} \end{equation}
  where all of the coefficients $c_0$, $c_{ij}$ and $c'_{ij}$ are in $\Z$. Let $c$ be the maximum among the absolute values of all of these coefficients and let $Q$ be the total number of terms in $f_0$. Since each variable in $f_0$ is bounded between 0 and $t-1$, we have for $t \gg 0$ that 
  \begin{equation} \label{first_Q_inequality} -Qc(t-1) \leq f_0 \leq Qc(t-1). \end{equation}
 
  We can loosen~(\ref{first_Q_inequality}) to obtain
  \begin{equation} \label{second_Q_inequality} -Qct < f_0 \leq Qct  \end{equation}
  which is equivalent to the finite disjunction
  \begin{equation} \label{Q_cases} \bigvee_{h=-Qc+1}^{Qc} (h-1)t < f_0 \leq ht. \end{equation}

  Now fix a particular value of $h$. With $(h-1)t < f_0 \leq ht$, the inequality $\sum_{s \geq 0} f_s t^s \leq 0$ is equivalent to $\sum_{s \geq 1} f_st^s \leq -ht$. Adding $ht$ to both sides and dividing by $t$, we obtain $\sum_{s \geq 0}f'_st_s \leq 0$, where indeed $f'_s = f_{s+1}$ for $s \geq 1$ and $f'_0 = f_1 + h$.  
\end{proof}

\begin{proof}[Proof of Proposition \ref{quantifier_separation}]
  Given a parametric Presburger family $S_t$, consider the affine equivalent family $S_t''$ and its defining formula $\varphi''$ that are obtained from Lemma~\ref{base_t}. Let $\sum_{s \geq 0} f_s t^s \leq 0$ be one of the inequalities occurring in $\varphi''$, and $k$ and $\ell$ be defined as in the proof of Lemma~\ref{base_t}. If $k=0$, then there are no quantified variables and we do not need to modify the inequality.
  
  If $k \geq 1$, then in particular $k + \ell \geq 1$, so $f_0$ does not contain any of the unbounded free variables. Thus we can apply Lemma~\ref{degree_reduction}. Consider one of the resulting inequalities $\sum_{s \geq 0} f'_s t^s \leq 0$ that is not in Presburger arithmetic. If $k + \ell = 1$, then by Lemma~\ref{base_t}, $f_s$ does not contain any quantified variables for $s \geq 1$, and then by Lemma~\ref{degree_reduction}, $f_s'$ does not contain quantified variables for any value of $s$ whatsoever. That is, the inequality $\sum_{s \geq 0} f'_s t^s \leq 0$ is free of quantified variables. On the other hand, if $k + \ell > 1$, then $f_0' = f_1 + h$ does not contain any unbounded free variables and so we can apply Lemma~\ref{base_t} again. In fact we can repeat this process $k+\ell$ times. For the same reason as in the case of $k+\ell=1$, the resulting inequalities are of two types: those that contain only Presburger arithmetic and those that are free of quantified variables.
\end{proof}

\subsection{Step 4}

Now we apply a quantifier elimination algorithm for ``classical'' Presburger arithmetic to the output of Step 3. The algorithm we will use is that described in \cite{cooper}.

Say that an $\mathcal{L}^+_{\textup{EQP}}$-formula \emph{has no division by $t$} if it involves only divisibility relations $D_c$ where $c$ is a constant (with no dependence on $t$).

A term or formula in $\mathcal{L}^+_{\textup{EQP}}$ \emph{has no multiplication by $t$} if it contains no terms involving multiplication by any $f(t) \in \textup{EQP}$ of degree $\geq 1$.

\begin{prop}
\label{step4}
Suppose that $\varphi(\A, \z)$ is an $\mathcal{L}^+_{\textup{EQP}}$-formula with no division by $t$ whose free variables are among the variables $\A$ and $\z$ and whose quantified variables are listed in the finite tuple $\B$ (which is assumed to be disjoint from $\A$ and $\z$). We further assume:

\begin{enumerate}
\item The atomic subformulas of $\varphi$ are all of one of three types: 
\begin{enumerate}

\item Type 1, which are inequalities involving only variables in $\A$, $\B$, and $t$, and which do not involve any multiplication by $t$;
\item Type 2, which are inequalities involving only $t$ and the free variables (from $\A$ or from $\z$); and
\item Type 3, which are divisibility conditions $D_c(s)$ where $c$ is a constant and $s$ is a term which does not involve any multiplication by $t$;
\end{enumerate}

\item Any free variable from $\A$ is explicitly bounded between $0$ and $t-1$ (inclusive) by an atomic subformula of $\varphi$ which is outside the scope of any quantifier or disjunction; and

\item Any quantified variable from $\B$ is bounded between $0$ and $t-1$ (inclusive).

\end{enumerate}

\textbf{THEN} $\varphi$ is logically equivalent to a quantifier-free formula in $\mathcal{L}^+_{\textup{EQP}}$ which has no division by $t$.
\end{prop}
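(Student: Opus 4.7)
The plan is to apply Cooper's classical quantifier elimination procedure from \cite{cooper} recursively over the quantified variables in $\B$, after first pushing the Type 2 atoms outside the scope of each quantifier. The crucial point is that no Type 2 atom involves any variable of $\B$, so relative to a quantifier over $b \in \B$ such atoms behave as opaque propositional variables and can be handled by standard Boolean manipulation.

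Proceeding innermost to outermost, fix a quantifier $Q b$ whose immediate scope $\psi$ is quantifier-free. Let $T_1, \ldots, T_N$ be the Type 2 atoms occurring in $\psi$, and write $\psi$ in disjunctive normal form with respect to them:
\[ \psi \;\equiv\; \bigvee_{\tau \in \{0,1\}^N} \Big(\bigwedge_{\tau_i=1} T_i \,\wedge\, \bigwedge_{\tau_i=0} \neg T_i \,\wedge\, \psi_\tau\Big), \]
where each $\psi_\tau$ is a Boolean combination of only Type 1 and Type 3 atoms. Since the $T_i$ do not contain $b$, moving $Q b$ inward (using distributivity over disjunction for $\exists$ and over conjunction for $\forall$) reduces the task to eliminating $b$ from $Q b\, \psi_\tau$ for each $\tau$.

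Now each $\psi_\tau$ contains no multiplication by $t$ and only divisibility predicates $D_c$ with $c$ an integer constant. Regarding $t$ as an ordinary free variable, $\psi_\tau$ is a classical $\mathcal{L}^+_{Pres}$-formula in the variables $\A \cup \B \cup \{t\}$, and Cooper's algorithm applies to produce a quantifier-free $\mathcal{L}^+_{Pres}$-formula $\chi_\tau$ equivalent to $Q b\,\psi_\tau$. Splicing the $\chi_\tau$ back into the Boolean combination above yields a quantifier-free $\mathcal{L}^+_{\textup{EQP}}$-formula equivalent to $Q b\, \psi$, whose atoms are still of Type 1, Type 2, or Type 3. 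Iterating over the remaining quantifiers in $\B$ produces the desired quantifier-free equivalent of $\varphi$.

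The one point requiring careful verification, which I expect to be the main technical obstacle, is that Cooper's procedure preserves the ``no multiplication by $t$'' and ``constant moduli only'' conditions, even though $t$ participates as a free variable during elimination. This is checked by direct inspection of the algorithm: Cooper's case analysis (on the sign of the coefficient of the eliminated variable, together with a supplementary divisibility case using the lcm of those coefficients) operates only on the coefficients of $b$ in Type 1 atoms and on the moduli of Type 3 atoms, all of which are integer constants by hypothesis. Hence the new moduli introduced are lcm's of integer constants, and $t$ appears in the output only through linear terms already present in the input. The bounds on $\A$ and $\B$ in hypotheses (2) and (3) raise no complications, since they are ordinary Type 1 atoms handled transparently by the same procedure.
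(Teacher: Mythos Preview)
Your proposal is correct and follows essentially the same approach as the paper: the paper's proof is a one-line citation to Theorem~1.5 of \cite{presburger_bounded_qe}, and you have spelled out the argument that underlies that citation---namely, since Type~2 atoms contain no quantified variables they can be factored out by Boolean manipulation, and on the remaining Type~1/Type~3 part (which, treating $t$ as an ordinary free variable, is classical $\mathcal{L}^+_{Pres}$) Cooper's algorithm applies and produces output of the same shape. One minor point: your phrase ``distributivity over conjunction for $\forall$'' is slightly garbled, since you wrote $\psi$ in DNF; the cleanest fix is either to note that your case-split on the $T_i$ is a \emph{partition} independent of $b$ (so $\forall b$ commutes with it as well), or simply to reduce $\forall b$ to $\neg\exists b\,\neg$ and handle only the existential case.
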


\begin{proof}
This follows from Theorem~1.5 of \cite{presburger_bounded_qe}.
\end{proof}

\begin{example}
A typically complex formula satisfying the hypotheses of Proposition~\ref{step4} is $$\varphi(a_0, a_1) = (0 \leq a_0 < t) \wedge (0 \leq a_1 < t)$$ $$\wedge \exists b_0 \exists b_1 \left[(0 \leq b_0, b_1 < t) \wedge (t < 2a_0 + 3b_0 + 7b_1 \leq 2t) \wedge (2t < a_1 + 2b_1 \leq 3t) \right].$$ This contains no atomic subformulas of Type 2 involving $z_i$ variables, but adding such a subformula would not make the quantifier elimination process any more difficult. Also, the bounds on $a_0$ and $a_1$ outside the quantifiers will have no effect on the procedure.

We work outwards starting from the inner quantifier $\exists b_1$. Using the notation of the procedure, the least common multiple $M$ of the $b_1$ coefficients is $14$, so we first multiply substitute $b_1'$ for $14 b_1$ and get that the subformula of $\varphi$ bounded by the scope of $\exists b_1 \ldots$ is equivalent to $$\exists b'_1 \, \, (0 \leq b_0 < t) \wedge (0 \leq b'_1 < 14t)$$ $$\wedge (2t < 4a_0 + 6 b_0 + b'_1 \leq 4t) \wedge (14t < 7 a_1 + b'_1 \leq 21t) \wedge D_{14}(b'_1).$$ Now as in the proof, within the quantifier $\exists b'_1$ we have atomic subformulas of the three types:

\bigskip

(A$'$): $b'_1 < 14t$, $b'_1 < 4t - 4 a_0 - 6 b_0 + 1$, and $b'_1 < 21t - 7a_1 + 1$;

(B$'$): $-1 < b'_1$, $2t - 4a_0 - 6b_0 < b'_1$, and $14t - 7a_1 < b'_1$; and

(C$'$): $D_{14} (b'_1)$.

\bigskip

The procedure replaces the entire quantified subformula bounded by the scope of $\exists b'_1$ by a disjunction $\psi'$ of $42$ quantifier-free formulas, one for each choice of a constant $j \in \{1, \ldots, 14\}$ and one of the three expressions B$'$ listed above. For example, fixing the second subformula of type B$'$ yields the following disjunction, which forms a part of $\psi'$: $$\bigvee_{j=1}^{14} (0 \leq b_0 < t) \wedge (0 \leq 2t - 4a_0 - 6b_0 + j \leq 14t) \wedge (2t < 4 a_0  + 2t - 4a_0  + j \leq 4t)$$ $$\wedge(14t < 7a_1 + 2t - 4a_0 - 6b_0 +j \leq 21t ) \wedge D_{14}(2t - 4a_0 - 6b_0 + j) .$$ (Note that for any fixed values of $t, a_0,$ and $b_0$, exactly one of the $14$ values of $j$ will make the last conjunct with $D_{14}$ true, so in practice one does not need to evaluate all of the inequalities for every possible value of $j$.)

Now if we were to continue the quantifier elimination procedure for $\varphi$ (which we will not), the next step would be to eliminate the quantifier $\exists b_0$. We only note that this time we have a subformula of Type 3, $D_{14}(2t - 4a_0 - 6b_0 +j)$, in the input formula, even though the original input formula $\varphi$ contained no such divisibility predicates. The important thing is that it has the correct form for a Type 3 subformula: we are dividing only by the constant $14$ and there is no multiplication of $t$ with another variable.

\end{example}

\subsection{Step 5}

\begin{lem}
Let $S_t\subseteq \Z^d$ be defined by an $\mathcal{L}^+_{\textup{EQP}}$ formula, $\varphi(\x)$, which is quantifier-free and has no division by $t$ (the only divisibility relations are of the form $D_c$, where $c$ is a constant). Then $S_t$ can be written as a finite disjoint union,
\[S_t=\bigsqcup_{i=1}^n S_{i,t},\]
where $S_{i,t}$ is defined by an $\mathcal{L}^+_{\textup{EQP}}$ formula, $\varphi_i(\x)$, that is a conjunction of atomic formulas, each of the form:
\begin{itemize}
\item $\mathbf{f} (t)\cdot \x \le g(t)$ or
\item $D_c\big(\mathbf{f}(t)\cdot\x-g(t)\big)$,
\end{itemize}
where $\vec f:\N\rightarrow \Z^d$ has $\textup{EQP}$ coordinates, $g:\N\rightarrow \Z$ is in $\textup{EQP}$, and $c\in\Z$ is constant.
\end{lem}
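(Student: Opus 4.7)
The plan is a disjunctive normal form argument, refined by residues modulo the divisibility constants so that negations of divisibility predicates never appear explicitly.

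First, I would normalize the atomic subformulas of $\varphi$ into the two allowed shapes. Any $\mathcal{L}^+_{\textup{EQP}}$-term in $\x$ and $t$ simplifies to $\mathbf{f}(t)\cdot\x + h(t)$ with $\mathbf{f}\in\Z[t]^d$ and $h\in\Z[t]$, so an atomic equality becomes $\mathbf{f}(t)\cdot\x = g(t)$ (split into two weak inequalities), a weak inequality becomes $\mathbf{f}(t)\cdot\x\le g(t)$, and a divisibility becomes $D_c(\mathbf{f}(t)\cdot\x - g(t))$; we may assume each $c$ is a positive integer, since $D_c = D_{-c}$ and $D_0$ is identically false. Crucially, the negation of $\mathbf{f}(t)\cdot\x\le g(t)$ is $(-\mathbf{f}(t))\cdot\x\le -g(t)-1$, which is again of the allowed shape, so negations of inequality atoms stay within our vocabulary.

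Next, list the inequality atoms $A_1,\ldots,A_p$ and divisibility atoms $D_{c_k}(r_k)$, $k=1,\ldots,q$, occurring in the normalized $\varphi$. For every sign pattern $\sigma\in\{+,-\}^p$ and every residue vector $\vec j\in\prod_{k=1}^q\{0,1,\ldots,c_k-1\}$, set
\[\psi_{\sigma,\vec j}(\x)\ :=\ \bigwedge_{i:\sigma_i=+} A_i(\x)\ \wedge\ \bigwedge_{i:\sigma_i=-} \neg A_i(\x)\ \wedge\ \bigwedge_{k=1}^q D_{c_k}(r_k(\x)-j_k).\]
After replacing each $\neg A_i$ by its flipped-and-shifted inequality, $\psi_{\sigma,\vec j}$ is a conjunction of atomic formulas of exactly the shapes required. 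For each fixed $t$, the sets defined by distinct $(\sigma,\vec j)$ are disjoint: two distinct sign patterns force some $A_i$ both to hold and fail, while two residues $j_k\neq j_k'$ on the same divisibility give contradictory residue classes for $r_k$ modulo $c_k$. Their union is all of $\Z^d$, because every point realizes a definite truth value at each $A_i$ and a definite residue at each $r_k$.

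Finally, I would use that $\varphi$ is a Boolean combination of the atoms $A_i$ and $D_{c_k}(r_k)$, and that $\psi_{\sigma,\vec j}$ fixes the truth values of all of them ($A_i$ holds iff $\sigma_i=+$, and $D_{c_k}(r_k)$ holds iff $j_k=0$). Hence whether $\varphi$ is true on the set defined by $\psi_{\sigma,\vec j}$ depends only on $(\sigma,\vec j)$, not on $t$ or $\x$. Letting $T$ be the (purely combinatorial, $t$-independent) set of pairs for which $\varphi$ evaluates to true,
\[S_t\ =\ \bigsqcup_{(\sigma,\vec j)\in T}\bigl\{\x\in\Z^d : \psi_{\sigma,\vec j}(\x)\text{ holds at parameter }t\bigr\},\]
which is the desired decomposition. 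There is no deep obstacle; the one piece of mild care is the refinement by residue vectors, which sidesteps the problem of expressing $\neg D_c(s)$ (a disjunction over the $c-1$ nonzero residues) as a conjunction of atomic formulas of the allowed form.
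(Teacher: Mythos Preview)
Your proof is correct and follows essentially the same route as the paper: partition $\Z^d$ into cells by fixing, for each inequality atom, whether it holds or not, and for each divisibility atom, the residue class, then take as the $S_{i,t}$ those cells on which $\varphi$ evaluates to true. The only cosmetic difference is that the paper uses a three-way split $\{<,=,>\}$ on each inequality where you use the two-way split $\{A_i,\neg A_i\}$; both yield conjunctions of the required form after rewriting.
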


\begin{proof}
Using that 
\[\neg(\mathbf{f}(t)\cdot \x \le g(t))\quad\text{logically equivalent to}\quad -\mathbf{f}(t)\cdot \x\le -g(t)-1\]
and that 
\[\neg\big(D_c(\mathbf{f}(t)\cdot\x-g(t))\big)\quad\text{logically equivalent to}\quad \bigvee_{k=1}^{c-1} D_c\big(\mathbf{f}(t)\cdot\x-g(t)+k\big),\]
we may assume that every atomic formula  in $\varphi(\x)$ is of the form $\mathbf{f}(t)\cdot \x \le g(t)$ or $D_c\big(\mathbf{f}(t)\cdot\x-g(t)\big)$.

Enumerate the atomic formulas in $\varphi(\x)$ by $r_1, \ldots,r_m$. For $1\le j\le m$, define $R_j$, a set of atomic formulas, as follows:
\begin{itemize}
\item If $r_j$ is of the form $\mathbf f(t)\cdot \mathbf{x} \le g(t)$, then
\[R_j=\{\mathbf f(t)\cdot \mathbf{x} < g(t),\ \mathbf f(t)\cdot \mathbf{x} > g(t),\ \mathbf f(t)\cdot \mathbf{x} = g(t)\}\]
\item If $r_j$ is of the form $D_c\big(\mathbf f(t)\cdot\mathbf{x}-g(t)\big)$, then
\[R_j=\{D_c\big(\mathbf f(t)\cdot\mathbf{x}-g(t)+k\big):\ 0\le k\le c-1\}.\]
\end{itemize}

For a given $j$, every element of $\Z^d$ satisfies exactly one $s_j\in R_j$. Also, for a given $s_j\in R_j$, either every element of $\Z^d$ satisfying $s_j$ also satisfies $r_j$, or every element of $\Z^d$ satisfying $s_j$ fails to satisfy $r_j$. For any choice $(s_1,\ldots,s_m)\in R_1\times\cdots\times R_m$, define $S_{(s_1,\ldots,s_m),t}\subseteq\Z^d$ with the formula $s_1\wedge\cdots\wedge s_m$. These $S_{(s_1,\ldots,s_m),t}$ partition $\Z^d$. Furthermore, for a given $S_{(s_1,\ldots,s_m),t}$ either every $\mathbf x\in S_{(s_1,\ldots,s_m),t}$ satisfies $\varphi(\mathbf x)$ or every $\mathbf x\in S_{(s_1,\ldots,s_m),t}$ fails to satisfy $\varphi(\mathbf x)$. Therefore $S_t$ is the disjoint union of those $S_{(s_1,\ldots,s_m),t}$ that do satisfy $\varphi(\mathbf x)$. Using that $<$, $>$, and $=$ may be rewritten with $\le$, these sets are of the required form.
\end{proof}

Since $S_t$ has now been written as a disjoint union, we may examine each piece separately; see Remark~\ref{disj_union} above.

Our next task is to eliminate the $D_c(\cdot)$ atomic formulas with an affine reduction. To do this, let's look at a conjunction of formulas, each of the form $D_c\big(\mathbf f(t)\cdot\mathbf{x}-g(t)\big)$. For each $t$, this will be a translate of a lattice. Using the language of \cite{Sch}, this could be called an ``external'' representation of this lattice translate, i.e., defined via constraints. We'd like an ``internal'' representation, i.e., defined parametrically; in this case, this would be a collection of basis vectors for the lattice, together with a translation vector. The content of the following proposition is that we can achieve this in an \textup{EQP} way, cf. Example \ref{Ex:step5a}. Note that the proposition even applies when the divisibility relations are of the form $D_{h(t)}$, where $h(t)\in \textup{EQP}$.

\begin{prop}
\label{prop:Hermite}
Let $S_t\subseteq\Z^d$ be defined by
\[\bigwedge_{j=1}^n D_{h_j(t)}\big(\mathbf f_j(t)\cdot\mathbf x -g_j(t)\big),\]
where $\mathbf f_j:\N\rightarrow \Z^d$ has $\textup{EQP}$ coordinates, $g_j,h_j:\N\rightarrow \Z$ in $\textup{EQP}$, and 
$h_j(t)$ is eventually nonzero. Then there exist $N$ and a period $m$, such that, for fixed $i$ with $0\le i\le m-1$, either
\begin{itemize}
\item $S_t$ is empty for all $t\in\N$ with $t\ge N$ and $t\equiv i\bmod m$, or 
\item there exist $\mathbf u_0,\mathbf u_1,\ldots,\mathbf u_r:\N\rightarrow\Z^d$ whose coordinate functions are polynomials in $\Z[t]$, such that, for all $t\in\N$ with $t\ge N$ and $t\equiv i\bmod m$, we have that
\[S_t=\mathbf u_0(t)+\Z \mathbf u_1(t)+\cdots+\Z \mathbf u_r(t),\]
where $\mathbf u_1(t),\ldots,\mathbf u_r(t)$ are linearly independent.
\end{itemize}
\end{prop}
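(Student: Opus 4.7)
The plan is to first reduce to polynomial coefficients via a periodicity argument, and then to carry out a parametric Hermite Normal Form (HNF) computation.

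Since each of $\mathbf{f}_j$, $g_j$, $h_j$ is an EQP, I would begin by choosing a common period $m_0$ such that on each residue class $t\equiv i\pmod{m_0}$, for sufficiently large $t$, all of these functions agree with polynomials in $\Z[t]$. From here on I would fix such a residue class $i$ and work with purely polynomial data, allowing $m$ and $N$ to be enlarged finitely often as the argument proceeds.

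Next, I would convert the conjunction of divisibility conditions into a single linear system over $\Z$ by introducing slack variables. Letting $A(t)\in\Z[t]^{n\times d}$ be the matrix with rows $\mathbf{f}_j(t)$, $H(t)=\textup{diag}(h_1(t),\ldots,h_n(t))$, and $\mathbf{g}(t)=(g_1(t),\ldots,g_n(t))^T$, we have $\mathbf{x}\in S_t$ iff there exists $\mathbf{y}\in\Z^n$ with $A(t)\mathbf{x}-H(t)\mathbf{y}=\mathbf{g}(t)$. Setting $M(t):=[A(t)\mid -H(t)]\in\Z[t]^{n\times(d+n)}$, the set $S_t$ is the image of the integer affine solution set $\{\mathbf{z}\in\Z^{d+n}:M(t)\mathbf{z}=\mathbf{g}(t)\}$ under projection onto the first $d$ coordinates.

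The heart of the proof will be a parametric HNF computation: I aim to produce a unimodular matrix $U(t)\in\textup{GL}_{d+n}(\Z)$ whose entries are polynomials in $t$ (within the fixed residue class and for large $t$) such that $M(t)U(t)=[L(t)\mid 0]$ with $L(t)$ lower-triangular and nonsingular, also with polynomial entries. Given such $U$ and $L$, solvability of the system reduces to checking whether $L(t)^{-1}\mathbf{g}(t)$ has integer coordinates -- a finite collection of divisibility conditions which, after subdividing the residue class further, either always or never holds for large $t$. When they hold, the integer solution set is the explicit affine lattice $\mathbf{z}_0(t)+U(t)\cdot(\{0\}^{\textup{rk}(L)}\times\Z^{d+n-\textup{rk}(L)})$, whose projection onto the first $d$ coordinates gives the required representation $\mathbf{u}_0(t)+\Z\mathbf{u}_1(t)+\cdots+\Z\mathbf{u}_r(t)$; a final HNF pass would extract a linearly independent subset from these generators.

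The hard part will be the parametric HNF itself, because the HNF algorithm repeatedly needs to compute integer GCDs of entries which are polynomial functions of $t$, and $\gcd(p_1(t),\ldots,p_k(t))$ is not a polynomial in $t$ in general. The key lemma I would invoke (or prove separately) is that such GCDs are always eventually quasi-polynomial in $t$: for two polynomials $p,q\in\Z[t]$ which are coprime in $\Q[t]$, $\gcd(p(t),q(t))$ divides the constant resultant $\textup{Res}(p,q)$ and therefore takes only finitely many values, each occurring on a periodic set of $t$'s; the general case reduces to this after pulling out the $\Q[t]$-gcd and the content. Applying this fact at each pivot step of the HNF algorithm, and absorbing the resulting finite additional period into $m$ and finite exceptional set into $N$, yields the desired polynomial $U(t)$ and $L(t)$ within each residue class, completing the construction.
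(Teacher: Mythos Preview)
Your proposal is correct and follows essentially the same route as the paper: rewrite the conjunction of divisibilities as an integer linear system $[F\mid H]\begin{bmatrix}\mathbf{x}\\\mathbf{y}\end{bmatrix}=\mathbf{g}$, compute a parametric Hermite normal form, test integrality of $B^{-1}\mathbf{g}$ to decide emptiness on each residue class, parametrize the solution set, project to the first $d$ coordinates, and run HNF once more to restore linear independence. The only real difference is that where you sketch the ``hard part'' (that $\gcd(p_1(t),\ldots,p_k(t))$ is eventually quasi-polynomial, via resultants and content) in order to justify the parametric HNF, the paper simply invokes Corollary~2.8 of Calegari--Walker~\cite{CW}, which packages exactly that argument and delivers the HNF with EQP entries directly.
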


\begin{proof}
Note that $S_t$ is the set of $\mathbf x\in\Z^d$ such that there exist $\mathbf y\in\Z^n$, such that
\[\mathbf f_j(t)\cdot \mathbf x+h_j(t)y_j=g_j(t),\]
for all $1\le j\le n$. We create an $n\times (d+n)$ matrix $A$, with entries in $\textup{EQP}$, as follows: let $A=[F\ D]$, where $F$ is the $n\times d$ matrix whose $j^\text{th}$ row is the coordinates of $\mathbf f_j(t)$, and $D$ is the diagonal matrix whose $(j,j)$ entry is $h_j(t)$. Let $\mathbf g$ be the $n\times 1$ column vector whose $j^{th}$ entry is $g_j(t)$. Regarding $\mathbf x$ and $\mathbf y$ as column vectors, we have that $S_t$ is the set of $\mathbf x\in\Z^d$ such that there exists $\mathbf y\in\Z^n$, such that
\[A\left[\begin{matrix} \mathbf x\\ \mathbf y\end{matrix}\right] = \mathbf g.\]

Using Corollary 2.8 of \cite{CW}, $A$ may be put into \emph{Hermite normal form}. That is, letting $r$ be the rank of $A$ (we actually have $r=n$ here, because the matrix $D$ ensures that $A$ is full row rank), there exist an $n\times r$ matrix $B$ and a $(d+n)\times(d+n)$ matrix $U$ such that
\begin{itemize}
\item $AU=[B\ 0]$,
\item all entries above the main diagonal of $B$ are 0,
\item all entries along the main diagonal of $B$ are positive,
\item all entries below the main diagonal of $B$ are nonnegative, and each row achieves its maximum uniquely on the main diagonal,
\item $U$ is unimodular, i.e., determinant $\pm 1$ ($U$ encodes the elementary column operations transforming $A$ into $B$), and
\item all entries are in $\textup{EQP}$.
\end{itemize}

Corollary 5.3b of \cite{Sch} shows that the set of integer solutions to $A\left[\begin{matrix} \mathbf x\\ \mathbf y\end{matrix}\right] = \mathbf g$ is nonempty if and only if $B^{-1}\mathbf g$ is an integer. The entries of $B^{-1}$ may be ratios of $\textup{EQP}$s; the set of $t$ for which $B^{-1}\mathbf g$ is integral will be eventually periodic. Therefore we may assume we are looking at a residue class $i\bmod m$ such that $B^{-1}\mathbf g$ is integral. Then Corollary 5.3c of \cite{Sch} shows that the set of integer solutions to $A\left[\begin{matrix} \mathbf x\\ \mathbf y\end{matrix}\right] = \mathbf g$ can be written in the form
\[\mathbf v_0(t)+\Z \mathbf v_1(t)+\cdots+\Z \mathbf v_d(t),\]
where $\mathbf v_1(t),\ldots,\mathbf v_d(t)$ are linearly independent; in particular,
\[\mathbf v_0=U\left[\begin{matrix}B^{-1}\mathbf g\\ 0\end{matrix}\right]\qquad\text{and}\qquad \mathbf v_j=(n+j)^{\text{th}}\text{ column of $U$,} \]
for $1\le j\le d$, and so the entries of $\mathbf v_j:\N\rightarrow\Z^{d+n}$ are in $\textup{EQP}$. For $0\le j\le d$, let $\mathbf w_j$ be the first $d$ coordinates of $\mathbf v_j$. Then 
\[S_t=\mathbf w_0(t)+\Z \mathbf w_1(t)+\cdots+\Z \mathbf w_d(t).\]
We are almost finished; the only problem is that the $\mathbf w_1,\ldots,\mathbf w_d$ may not be linearly independent. Let $A'$ be the $d\times d$ matrix whose columns are $\mathbf w_1,\ldots,\mathbf w_d$, and let $r$ be the rank of $A'$. Putting $A'$ in Hermite normal form, we have a $d\times d$ unimodular matrix $U'$ and a $d\times r$ matrix $B'$, such that $A'U'=[B'\ 0]$ with restrictions on the entries of $B'$ as above. Let $\mathbf u_1\ldots,\mathbf u_r$ be the columns of $B'$, which are linearly independent. Since elementary column operations don't affect the $\Z$-span of the columns,
\[\Z \mathbf w_1(t)+\cdots+\Z \mathbf w_d(t)=\Z \mathbf u_1(t)+\cdots+\Z \mathbf u_r(t).\]
Taking $\mathbf u_0=\mathbf w_0$, we have the desired property:
\[S_t=\mathbf u_0(t)+\Z \mathbf u_1(t)+\cdots+\Z \mathbf u_r(t),\]
where $\mathbf u_1(t),\ldots,\mathbf u_r(t)$ are linearly independent.
\end{proof}

To recall, we have written the set we are interested in as a disjoint union of sets, each defined by a conjunction of atomic formulas of the form $\mathbf f(t)\cdot \mathbf{x} \le g(t)$ and $D_c\big(\mathbf f(t)\cdot\mathbf{x}-g(t)\big)$. We now show that we can eliminate the $D_c(\cdot)$ formulas, with an affine reduction, cf. Example \ref{Ex:step5b}. In fact, this would work even for divisibility by non-constant $h(t)$:

\begin{lem}
Let $S_t\subseteq \Z^d$ be defined by an $\mathcal{L}^+_{\textup{EQP}}$ formula, $\varphi(\vec{x})$, that is a conjunction of atomic formulas, each of the form
\begin{itemize}
\item $\vec f(t)\cdot \vec{x} \le g(t)$ or
\item $D_{h(t)}\big(\vec f(t)\cdot\vec{x}-g(t)\big)$,
\end{itemize}
where $\vec f:\N\rightarrow \Z^d$ has $\textup{EQP}$ coordinates, $g,h:\N\rightarrow \Z\in \textup{EQP}$, and $h$ is eventually nonzero.

Then there is some $S'_t\subseteq \Z^{d'}$, affine reducible to $S_t$, such that $S'_t\subseteq \Z^{d'}$ is defined by an $\mathcal{L}_{\textup{EQP}}$ formula that is a conjunction of atomic formulas of the form $\mathbf f(t)\cdot \mathbf{x} \le g(t)$, where $\mathbf f:\N\rightarrow \Z^d$ has $\textup{EQP}$ coordinates and $g(t):\N\rightarrow \Z$ is in $\textup{EQP}$.
\end{lem}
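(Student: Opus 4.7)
The plan is to separate the conjunction $\varphi$ into its inequality part $\varphi_I$ (the conditions $\mathbf f_k(t)\cdot\mathbf x\le g_k(t)$) and its divisibility part $\varphi_D$ (the conditions $D_{h_j(t)}(\mathbf f_j(t)\cdot\mathbf x - g_j(t))$), apply Proposition~\ref{prop:Hermite} to $\varphi_D$ alone to replace the external (constraint) description of the lattice translate by an internal (parametric) one, and then substitute that parametrization into the inequalities. By Remark~\ref{disj_union}, it suffices to carry out the construction one residue class at a time.

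Concretely, let $T_t\subseteq\Z^d$ denote the set cut out by $\varphi_D$, so that $S_t=T_t\cap\{\mathbf x:\varphi_I(\mathbf x)\}$. Proposition~\ref{prop:Hermite} yields a period $m$ and threshold $N$ such that for each residue class $i\bmod m$, either $T_t=\emptyset$ for all sufficiently large $t\equiv i\bmod m$ (in which case $S_t=\emptyset$, and we take $S'_t=\emptyset$, definable, e.g., by $0\le -1$), or there are polynomial-coordinate functions $\mathbf u_0^i,\mathbf u_1^i,\ldots,\mathbf u_{r_i}^i$ with $\mathbf u_1^i(t),\ldots,\mathbf u_{r_i}^i(t)$ linearly independent, such that
\[
T_t=\mathbf u_0^i(t)+\Z\mathbf u_1^i(t)+\cdots+\Z\mathbf u_{r_i}^i(t).
\]
Fix such a nontrivial residue class and define the EQP-affine function $F_t:\Z^{r_i}\to\Z^d$ by
\[
F_t(z_1,\ldots,z_{r_i})=\mathbf u_0^i(t)+\sum_{j=1}^{r_i}z_j\,\mathbf u_j^i(t).
\]
The linear independence of the $\mathbf u_j^i(t)$ makes $F_t$ injective, and by construction its image is exactly $T_t$.

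Now substitute $\mathbf x=F_t(\mathbf z)$ into each inequality of $\varphi_I$ to obtain
\[
\sum_{j=1}^{r_i}\bigl(\mathbf f_k(t)\cdot\mathbf u_j^i(t)\bigr)\,z_j\ \le\ g_k(t)-\mathbf f_k(t)\cdot\mathbf u_0^i(t),
\]
whose coefficients and right-hand side are sums of products of EQP functions, hence themselves EQP. Define $S'_t\subseteq\Z^{r_i}$ to be the set of integer $\mathbf z$ satisfying all of these inequalities. Then $F_t$ restricted to $S'_t$ is a bijection onto $S_t$, so $S'_t$ is affine reducible to $S_t$, and $S'_t$ is cut out by exactly the required kind of conjunction. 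Handling each residue class this way (together with the finitely many small exceptional $t<N$, which can be absorbed by adjusting the EQPs) finishes the argument.

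I do not anticipate any essential obstacle: the entire geometric content is packaged in Proposition~\ref{prop:Hermite}, and what remains is bookkeeping, namely verifying injectivity of $F_t$ from linear independence, checking that EQPs are closed under the ring operations used in the substitution, and invoking Remark~\ref{disj_union} to treat residue classes independently. The only mildly delicate point is keeping the dimensions $r_i$ straight across residue classes, which is exactly what Remark~\ref{disj_union} is designed to let us sidestep.
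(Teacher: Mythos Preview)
Your proof is correct and follows essentially the same approach as the paper's: apply Proposition~\ref{prop:Hermite} to the divisibility part to obtain the internal parametrization $\mathbf x=\mathbf u_0(t)+\sum_j z_j\mathbf u_j(t)$, then substitute this into the linear inequalities to obtain the new system in the $z_j$. One small quibble: Remark~\ref{disj_union} concerns disjoint unions of $S_t$ in $\Z^d$ for fixed $t$, not partitioning the parameter $t$ into residue classes, so it is not quite the right citation for handling the varying $r_i$; however, this is harmless, since one can simply pad to $r=\max_i r_i$ (forcing the extra coordinates to zero via inequalities on the appropriate residue classes) and thereby obtain a single EQP-affine reduction from $\Z^r$.
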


\begin{proof}
Let $R_t\subseteq\Z^d$ be the set satisfying the conjunction of the $D_{h(t)}$ terms. $R_t$ can be represented by:
\[\{\mathbf u_0(t)+\lambda_1 \mathbf u_1(t)+\cdots+\lambda_r \mathbf u_r(t):\ \lambda_j\in\Z\},\]
where $\mathbf u_1(t),\ldots,\mathbf u_r(t)$ are linearly independent, by Proposition \ref{prop:Hermite}. We take as our affine reduction $F:\Z^r\rightarrow\Z^d$, given by $F(\vec \lambda)=\mathbf u_0(t)+\lambda_1 \mathbf u_1(t)+\cdots+\lambda_r \mathbf u_r(t)$. Since the $\mathbf u_j$ are linearly independent, this defines a bijection $\Z^r\rightarrow R_t$. An element $\mathbf x\in S_t$ satisfying a linear inequality $\mathbf f(t)\cdot \mathbf{x} \le g(t)$ is equivalent to $\vec \lambda=F^{-1}(\mathbf x)$ satisfying
\[\mathbf f(t)\cdot \big(\mathbf u_0(t)+\lambda_1 \mathbf u_1(t)+\cdots+\lambda_r \mathbf u_r(t)\big) \le g(t),\]
which simplifies to a linear inequality of the desired form.
\end{proof}

Now we have $S_t$ written as the set of integer points in a ``parametric polyhedron'', that is, defined by linear inequalities that depend on $t$, as above. If this polyhedron is bounded, then it immediately follows from \cite{ChLiSam} that $\abs{S_t}\in \textup{EQP}$. We must detect when $S_t$ is unbounded. To do this, it is helpful to switch to an ``internal'' representation of the parametric polyhedron, as opposed to this ``external'' representation as a solution set to linear inequalities, cf. Example \ref{Ex:step5c}. That is, we want to write the polyhedron as
\[\conv\{\mathbf x_1,\ldots, \mathbf x_q\}+\cone\{\mathbf y_1,\ldots,\mathbf y_r\}+\spanOp\{\mathbf z_1,\ldots,\mathbf z_s\},\]
where
\begin{align*}
\mathbf x_i&\in\Q^d,\ \mathbf y_j,\mathbf z_k\in\Z^d-\{ 0\},\\
A+B&=\{a+b:\ a\in A,\ b\in B\},\\
\conv\{\mathbf x_1,\ldots, \mathbf x_q\}&=\{\lambda_1\mathbf x_1+\cdots+\lambda_q\mathbf x_q:\ \lambda_i\in\R,\ \lambda_i\ge 0,\ \lambda_1+\cdots+\lambda_r=1\},\\
\cone\{\mathbf y_1,\ldots,\mathbf  y_r\}&=\{\lambda_1\mathbf y_1+\cdots+\lambda_r\mathbf y_r:\ \lambda_i\in\R,\ \lambda_i\ge 0\},\\
\spanOp\{\mathbf z_1,\ldots, \mathbf z_s\}&=\{\lambda_1\mathbf z_1+\cdots+\lambda_s\mathbf z_s:\ \lambda_i\in\R\},
\end{align*}
and $\cone\{\mathbf y_1,\ldots,\mathbf y_r\}$ contains no lines. We follow the convention that $\conv\{\emptyset\}=\emptyset$, $\cone\{\emptyset\}=\{\mathbf 0\}$, and $\spanOp\{\emptyset\}=\{\mathbf 0\}$, so that this representation may include the empty polyhedron. Also note that the polyhedron is bounded exactly when  $r=s=0$. We have the following proposition:

\begin{prop}
\label{final_reduction}
Suppose $S_t\subseteq \Z^{d}$ is defined by
\[\bigwedge_{j=1}^n \mathbf f_j(t)\cdot\mathbf x \le g_j(t),\]
where $\mathbf f_j:\N\rightarrow \Z^d$ has $\textup{EQP}$ coordinates and $g_j:\N\rightarrow \Z$ in $\textup{EQP}$. Then there exists $N$ and a period $m$ such that, for fixed $i$ with $0\le i\le m$, there exist $\mathbf x_1,\ldots,\mathbf x_q:\N\rightarrow\Q^d$ and $\mathbf y_1,\ldots,\mathbf y_r,\mathbf z_1\ldots,\mathbf z_s:\N\rightarrow\Z^d$, with $\mathbf x_j$ rational functions in $\Q(t)$ and $\mathbf y_j,\mathbf z_k$ nonzero polynomials in $\Z[t]$, such that, for all $t\in\N$  with $t\ge N$ and $t\equiv i\bmod m$, we have that $S_t$ is the set of integer points in
\[\conv\{\mathbf x_1(t),\ldots, \mathbf x_q(t)\}+\cone\{\mathbf y_1(t),\ldots,\mathbf y_r(t)\}+\spanOp\{\mathbf z_1(t),\ldots,\mathbf z_s(t)\},\]
where $\cone\{\mathbf y_1(t),\ldots,\mathbf y_r(t)\}$ contains no lines.
\end{prop}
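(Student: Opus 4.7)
The plan is to reduce to a fixed residue class $i$ modulo $m$ (where $m$ is the LCM of all periods of the EQPs $\mathbf f_j$ and $g_j$), so that on that class the coefficient vectors $\mathbf f_j(t)$ and constants $g_j(t)$ become genuine polynomials in $\Z[t]$. Once this reduction is made, every linear-algebraic computation can be carried out over the field $\Q(t)$, and then specialized to integer values of $t$. The standard fact from parametric linear algebra (see e.g. Cor.~2.8 of \cite{CW}) is that for a matrix $M(t)$ with entries in $\Z[t]$, the rank of $M(t)$ over $\R$ equals its generic rank over $\Q(t)$ for all $t$ outside a finite set, and Gaussian elimination over $\Q(t)$ produces a basis of $\ker M(t)$ whose entries are rational functions. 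Clearing denominators yields vectors in $\Z[t]^d$ which remain linearly independent for all sufficiently large $t$.

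First I would compute the lineality space $L_t$ of the polyhedron, namely the simultaneous kernel of the $\mathbf f_j(t)$. Applying the previous paragraph to the matrix whose rows are $\mathbf f_j(t)$, I obtain vectors $\mathbf z_1(t), \dots, \mathbf z_s(t) \in \Z[t]^d$ that form a basis of $L_t$ for all sufficiently large $t \equiv i \pmod{m}$. Here $s$ is $d$ minus the generic rank of the coefficient matrix. Next, fix any $d-s$ coordinates which are generically free, i.e. so that the corresponding projection $L_t^\perp \to \Q(t)^{d-s}$ is an isomorphism generically; on this complementary subspace, the polyhedron becomes pointed.

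Then I would enumerate candidate vertices and extreme rays by iterating over subsets of constraints. For vertices of the pointed quotient $P_t/L_t$, enumerate each subset $T \subseteq \{1,\ldots,n\}$ with $|T| = d-s$; using Cramer's rule, solve the system $\mathbf f_j(t) \cdot \mathbf x = g_j(t)$ for $j \in T$ (together with the projection onto the chosen complement), which produces a candidate $\mathbf x_T(t) \in \Q(t)^d$. The subset $T$ genuinely gives a vertex of $P_t$ precisely when (a) the $\{\mathbf f_j(t)\}_{j \in T}$ are linearly independent modulo $L_t$ (a polynomial nonvanishing condition in $t$) and (b) $\mathbf x_T(t)$ satisfies $\mathbf f_\ell(t) \cdot \mathbf x_T(t) \le g_\ell(t)$ for every other $\ell$. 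Each of these conditions is a polynomial inequality or equation in $t$, hence changes sign only finitely often, so is eventually true or eventually false. For extreme rays of the recession cone modulo $L_t$, one enumerates subsets $T$ of size $d-s-1$, computes the unique (up to scalar) direction $\mathbf y_T(t) \in \Q(t)^d$ annihilated by $\{\mathbf f_j(t) : j \in T\}$ modulo $L_t$, scales to clear denominators and obtain a vector in $\Z[t]^d$, and checks the two analogous polynomial conditions: the correct sign $\mathbf f_j(t) \cdot \mathbf y_T(t) \le 0$ for all $j$, and extremality of the face it defines.

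The main obstacle is bookkeeping: verifying that this enumeration, which stabilizes combinatorially for large $t$ in each residue class, actually produces a correct V-representation of $P_t \cap \Z^d$. The set-theoretic identity
\[
P_t = \conv\{\mathbf x_j(t)\} + \cone\{\mathbf y_j(t)\} + \spanOp\{\mathbf z_k(t)\}
\]
is the classical Minkowski--Weyl decomposition applied at each integer value $t \ge N$; the nontrivial point is only that the combinatorial type --- which subsets $T$ contribute a vertex or an extreme ray --- is constant along the residue class for large $t$. This follows because (i) each candidate $\mathbf x_T(t)$ and $\mathbf y_T(t)$ is fixed once $T$ is fixed, (ii) the conditions on $T$ to qualify as a vertex or extreme ray are finite conjunctions of polynomial (in)equalities in $t$, and (iii) each such polynomial is eventually of constant sign. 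Collecting the finitely many $T$ surviving in the limit, reindexing them as $\mathbf x_1(t), \ldots, \mathbf x_q(t)$ and $\mathbf y_1(t), \ldots, \mathbf y_r(t)$, and pairing with the lineality basis $\mathbf z_1(t), \ldots, \mathbf z_s(t)$ constructed above, completes the proof for the residue class $i$; taking $N$ large enough to clear all finitely many exceptional $t$ finishes the argument.
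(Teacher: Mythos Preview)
Your proposal is correct and follows essentially the same route as the paper: reduce to a residue class so the data lie in $\Z[t]$, compute the lineality space as $\ker F$ over $\Q(t)$, then enumerate subsets of constraints to produce candidate vertices and extreme rays via linear algebra over $\Q(t)$, using that each feasibility or independence condition is a polynomial (in)equality in $t$ and hence has eventually constant sign. The paper's proof is nearly identical, citing Theorem~8.5 of Schrijver for the correctness of this enumeration; the only cosmetic difference is that where you fix a complement to $L_t$ and work modulo the lineality space, the paper simply takes ``any solution'' to the square subsystem $F'\mathbf x = \mathbf g'$ (the ambiguity being exactly an element of $L_t$), which amounts to the same thing.
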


\begin{proof}
We may assume we are looking at a residue class $i\bmod m$ such that $g_j$ and the coordinates of $\mathbf f_j$ are in $\Q[t]$. Let $F$ be the $n\times d$ matrix whose $j^{\text{th}}$ row is $f_j$, and let $\mathbf g$ be the $n\times 1$ column vector whose $j^{\text{th}}$ entry is $g_j$. Regarding $\mathbf x\in\Z^d$ as a column vector, $S_t$ is the set integer solutions to $F\cdot\mathbf x\le \mathbf g$. Note that we may perform standard row reduction on $F$, in the field of rational functions $\Q(t)$. This suffices to compute the desired representation, using Theorem 8.5 of \cite{Sch}, as follows:
\begin{itemize}
\item For the convex hull, take any maximally linear independent subset of rows, $F'$, of $F$ and their corresponding $\mathbf g'$. (Note that any given subset of rows of $F$ is either eventually always linearly dependent or eventually always linearly independent, since this can be tested by computing determinants of minors of $F$, which are polynomial functions in $t$.) Take any solution $\mathbf x(t)$ to $F'\cdot\mathbf x = \mathbf g'$; $\mathbf x$ may be computed using row reduction and will have entries in $\Q(t)$. If $\mathbf x(t)$ eventually satisfies $F\cdot\mathbf x\le \mathbf g$, then
include it as one of the $\mathbf x_j(t)$.  Doing this for all maximally linearly independent set of rows of $F$ yields $\conv\{\mathbf x_1(t),\ldots, \mathbf x_q(t)\}$.
\item For the cone, take any subset of rows, $F'$, and a distinct row, $\mathbf f$, such that $F'$ and $\mathbf f$ together are a maximally linear independent subset of rows of $F$. Take any solution $\mathbf y(t)$ to
\[\left[\begin{matrix} F'\\ \mathbf f\end{matrix}\right]\mathbf y=\left[\begin{matrix}\mathbf 0 \\ -1\end{matrix}\right].\]
This may again be computed using row reduction and will have entries in $\Q(t)$. If $\mathbf y(t)$ eventually satisfies $F\cdot \mathbf y\le \mathbf 0$, then include $p(t)\mathbf y(t)$ as one of the $\mathbf y_j(t)$, where $p\in\Z[t]$ is eventually positive and makes $p\mathbf y\in\Z[t]$. Doing this for all such $F'$ and $\mathbf f$ yields
$\cone\{\mathbf y_1(t),\ldots,\mathbf y_r(t)\}$.
\item Use row reduction to find a basis for $F\cdot\mathbf z = \mathbf 0$. Clearing fractions to make each $\mathbf z_j(t)\in\Z[t]$, this yields $\spanOp\{\mathbf z_1(t),\ldots,\mathbf z_s(t)\}$.
\qedhere
\end{itemize}
\end{proof}

\subsection{Step 6}
In the output of Proposition~\ref{final_reduction}, any polyhedron $P$ with $r=s=0$ is bounded, and we may apply the results from \cite{ChLiSam} and from \cite{Shen15b}. The following lemma helps us deal with unbounded polyhedra.

\begin{lem} \label{auxiliaryQ}
Suppose we have a polyhedron, $P\subseteq\R^d$, given by
\[P=\conv\{\mathbf x_1,\ldots, \mathbf x_q\}+\cone\{\mathbf y_1,\ldots,\mathbf y_r\}+\spanOp\{\mathbf z_1,\ldots,\mathbf z_s\},\]
with $\mathbf x_i\in\Q^d$, $\mathbf y_j,\mathbf z_k\in\Z^d-\{0\}$. 
Let
\[Q=\conv\{\mathbf x_1,\ldots, \mathbf x_q\}+\zono\{\mathbf y_1,\ldots,\mathbf y_r,\mathbf z_1,\ldots,\mathbf z_s\},\]
where $\zono\{\mathbf w_1,\ldots,\mathbf w_u\}=\{\lambda_1\mathbf w_1+\cdots+\lambda_u\mathbf w_u:\ \lambda_i\in\R,\ 0\le\lambda_i\le 1\}$ is the zonotope (Minkowski sum of line segments) generated by the $\vec w_i$.

Assume either $r\ge 1$ or $s\ge 1$. Then
\begin{itemize}
\item if $Q$ contains an integer point, then $P$ contains an infinite number of integer points, and
\item if $Q$ contains no integer points, then $P$ contains no integer points.
\end{itemize}

Furthermore, $Q$ is a bounded polyhedron, given by the convex hull of the following points:
for each $W\subseteq\{\mathbf y_1,\ldots,\mathbf y_s,\mathbf z_1,\ldots,\mathbf z_t\}$ and $i$ with $1\le i\le q$, take the point
\[\mathbf x_i+\sum_{\mathbf w\in W}\mathbf  w.\] 
\end{lem}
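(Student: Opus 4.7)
The plan is to proceed through four steps, each of which is quite short.

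\textbf{Step 1: $Q \subseteq P$ and infinitely many points.} Since $[0,1] \subseteq [0,\infty)$ and $[0,1] \subseteq \R$, every point of the zonotope $\zono\{\mathbf y_1,\ldots,\mathbf y_r,\mathbf z_1,\ldots,\mathbf z_s\}$ belongs to $\cone\{\mathbf y_1,\ldots,\mathbf y_r\} + \spanOp\{\mathbf z_1,\ldots,\mathbf z_s\}$, hence $Q \subseteq P$. Now if $Q \cap \Z^d \neq \emptyset$, pick $p \in Q \cap \Z^d \subseteq P \cap \Z^d$. Under the hypothesis $r \geq 1$ or $s \geq 1$, there is a nonzero integer vector $\mathbf v \in \{\mathbf y_1, \mathbf z_1\}$ lying in the recession cone of $P$, and $p + k\mathbf v \in P \cap \Z^d$ for every $k \in \N$, yielding infinitely many integer points.

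\textbf{Step 2: Every integer point of $P$ maps to one of $Q$.} Suppose $p \in P \cap \Z^d$ and write
\[ p \;=\; \sum_{i=1}^q \alpha_i \mathbf x_i \;+\; \sum_{j=1}^r \beta_j \mathbf y_j \;+\; \sum_{k=1}^s \gamma_k \mathbf z_k, \]
with $\alpha_i \geq 0$, $\sum_i \alpha_i = 1$, $\beta_j \geq 0$, and $\gamma_k \in \R$. Set
\[ p' \;=\; p \;-\; \sum_{j=1}^r \lfloor \beta_j \rfloor \mathbf y_j \;-\; \sum_{k=1}^s \lfloor \gamma_k \rfloor \mathbf z_k. \]
Because $\mathbf y_j, \mathbf z_k \in \Z^d$, the point $p'$ is integer. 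On the other hand $p'$ is a sum of a convex combination of the $\mathbf x_i$ and a combination of the $\mathbf y_j, \mathbf z_k$ with fractional coefficients $\{\beta_j\}, \{\gamma_k\} \in [0,1)$, so $p' \in Q$. This proves the contrapositive of the second bullet: if $P \cap \Z^d \neq \emptyset$, then $Q \cap \Z^d \neq \emptyset$.

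\textbf{Step 3: Boundedness and explicit vertex description.} The polytope $\conv\{\mathbf x_1,\ldots,\mathbf x_q\}$ is bounded, and the zonotope is the image of the bounded cube $[0,1]^{r+s}$ under a linear map, hence bounded; the Minkowski sum of two bounded sets is bounded, so $Q$ is bounded. The cube $[0,1]^{r+s}$ equals $\conv\{0,1\}^{r+s}$, so applying the linear map that sends the standard basis to $(\mathbf y_1,\ldots,\mathbf y_r,\mathbf z_1,\ldots,\mathbf z_s)$ yields
\[ \zono\{\mathbf y_1,\ldots,\mathbf y_r,\mathbf z_1,\ldots,\mathbf z_s\} \;=\; \conv\Bigl\{ \textstyle\sum_{\mathbf w \in W} \mathbf w \,:\, W \subseteq \{\mathbf y_1,\ldots,\mathbf y_r,\mathbf z_1,\ldots,\mathbf z_s\} \Bigr\}. \]
Since the Minkowski sum of two convex hulls is the convex hull of all pairwise sums of generators, $Q$ is the convex hull of the points $\mathbf x_i + \sum_{\mathbf w \in W} \mathbf w$ as claimed.

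\textbf{Expected difficulty.} There is no real obstacle here; the lemma is essentially an exercise in unwinding definitions, with Step 2 (the ``round the coefficients to their fractional parts'' trick) being the only substantive idea. One small thing to be careful about is that the lineality coefficients $\gamma_k$ can be negative, but this causes no issue because $\lfloor \gamma_k \rfloor \in \Z$ regardless of sign and $\{\gamma_k\} = \gamma_k - \lfloor \gamma_k \rfloor \in [0,1)$ as needed.
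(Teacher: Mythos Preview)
Your proof is correct and follows essentially the same route as the paper: $Q\subseteq P$ plus a recession direction for the first bullet, the ``replace each coefficient by its fractional part'' trick for the second, and the Minkowski-sum-of-polytopes description for the vertex set. Your Step~3 spells out the cube-to-zonotope argument a bit more explicitly than the paper does, but there is no substantive difference.
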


\begin{proof}
Without loss of generality, $r\ge 1$. If $Q$ contains an integer point, $\mathbf x$, then certainly $\mathbf x\in P$, since $Q\subseteq P$. But also $\mathbf x+k\mathbf y_1\in P$, for all $k\in \N$, so $P$ contains an infinite number of integer points.

We prove the contrapositive of the second bullet point. Suppose $P$ contains an integer point
\[\sum_i\lambda_i\mathbf x_i+\sum_j\mu_j\mathbf y_j+\sum_k\nu_k\mathbf z_k,\]
with $\lambda_i,\mu_k\ge 0$, $\sum_i\lambda_i=1$. Then $Q$ contains the integer point
\[\sum_i\lambda_i\mathbf x_i+\sum_j[\mu_j]\mathbf y_j+\sum_k[\nu_k]\mathbf z_k,\]
where $[\eta]$ is the unique real number in $[0,1)$ such that $\eta-[\eta]$ is an integer.

The representation of $Q$ as a convex hull follows from the fact that $Q$ is the Minkowski sum of $\conv\{\mathbf x_1,\ldots, \mathbf x_q\}$, the line segments $[0,\vec y_i]$, and the line segments $[0,\vec z_i]$, and the standard observation that each vertex of a Minkowski sum can be written as the sum of vertices of the summand polytopes.
\end{proof}

We now have all the ingredients necessary to prove Properties 1, 2, and 3. Using all of the results of this section up to Proposition~\ref{final_reduction}, and noting that all of the properties only need to be shown for all sufficiently large $t$ congruent to some $i$ modulo a fixed number $m$, it suffices to show the following.       

\begin{prop}
\label{properties123}
Suppose $S_t$ is the set of integer points in \[\conv\{\mathbf x_1(t),\ldots, \mathbf x_q(t)\}+\cone\{\mathbf y_1(t),\ldots,\mathbf y_r(t)\}+\spanOp\{\mathbf z_1(t),\ldots,\mathbf z_s(t)\},\] as in Proposition~\ref{final_reduction}. Then 
\begin{enumerate}
\item \label{item_nonempty} The set of $t$ such that $S_t$ is nonempty is eventually periodic.
\item \label{item_finite} The set of $t$ such that $S_t$ has finite cardinality is eventually periodic.
\item \label{item_EQP} There exists an EQP $g:\N\rightarrow\N$ such that, if $S_t$ has finite cardinality, then $g(t)=\abs{S_t}$.
\item \label{item_coords} There exists a function $\x:\N\rightarrow\Z^d$, whose coordinate functions are EQPs, such that, if $S_t$ is non\-emp\-ty, then $\x(t)\in S_t$. 
\end{enumerate}
\end{prop}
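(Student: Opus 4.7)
The plan is to fix a residue class $i \bmod m$ with $t \geq N$, so that by Proposition~\ref{final_reduction} the integers $q, r, s$ are constant on this class and the $\mathbf x_i(t), \mathbf y_j(t), \mathbf z_k(t)$ are given by polynomial/rational functions of $t$; the EQP and eventually-periodic conclusions will then be assembled from the finitely many residue classes. Recall that $S_t = P_t \cap \Z^d$ where $P_t$ admits both the internal description from Proposition~\ref{final_reduction} and the original external description by inequalities $\mathbf f_j(t)\cdot\mathbf x \leq g_j(t)$; within the residue class (after clearing any denominators arising from the EQP-to-polynomial promotion), the external form has $\Z[t]$ coefficients, so $P_t$ is a parametric polytope in the sense of Chen--Li--Sam \cite{ChLiSam} whenever it is bounded. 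The argument then splits on whether $r = s = 0$ (bounded case) or $r + s \geq 1$ (unbounded case).

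In the bounded case $r = s = 0$, the set $S_t$ is always finite, so (\ref{item_finite}) is immediate. Chen--Li--Sam produces an EQP computing $|S_t|$, settling (\ref{item_EQP}), and (\ref{item_nonempty}) follows since an EQP restricted to a single residue class is a polynomial whose vanishing locus is eventually empty. For (\ref{item_coords}), I would apply Theorem~\ref{integer_hulls} (Shen \cite{Shen15b}) to $P_t$: it returns the vertices of the integer hull of $P_t$ as a fixed-size list of functions with EQP coordinates, and selecting any one (say the first, substituting the zero function on the residue classes where the list is empty, i.e.\ where $S_t$ is identically empty) yields a function $\mathbf x: \N \to \Z^d$ with EQP coordinates satisfying $\mathbf x(t) \in S_t$ whenever $S_t \neq \emptyset$.

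In the unbounded case $r + s \geq 1$, Lemma~\ref{auxiliaryQ} tells us that $S_t$ is either empty or infinite, and that its emptiness is detected by whether the bounded polytope $Q_t$ contains an integer point. Thus (\ref{item_finite}) holds with ``finite'' equivalent to ``empty'' and (\ref{item_EQP}) is satisfied by $g \equiv 0$. To apply Chen--Li--Sam and Shen to $Q_t$, I would convert its explicit vertex description (from Lemma~\ref{auxiliaryQ}) into an external $\Z[t]$-inequality description via linear algebra over $\Q(t)$: facet-defining hyperplanes arise as solutions of linear systems whose coefficients are entries of the listed generators, yielding inequalities with $\Q(t)$ coefficients that scale to $\Z[t]$. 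Chen--Li--Sam then provides an EQP for $|Q_t \cap \Z^d|$, whose positivity eventually stabilizes on each residue class and thereby gives (\ref{item_nonempty}); Shen's theorem applied to $Q_t$ extracts an integer-hull vertex with EQP coordinates lying in $Q_t \subseteq P_t$, settling (\ref{item_coords}).

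The main obstacle will be making the conversion of $Q_t$ from a vertex description to a $\Z[t]$-inequality description precise and uniform across a residue class. This requires a further refinement of the modulus $m$ so that the combinatorial type of $Q_t$ is stable, i.e.\ which of the listed generating points are actual vertices of $Q_t$ and which $d$-subsets of vertices span facets must be constant on the class; both conditions reduce to vanishing and sign conditions on polynomial minors in $t$, each eventually stable modulo a suitable modulus, so this refinement is benign. Once this bookkeeping is in place, Chen--Li--Sam and Shen apply directly to $Q_t$, and piecing together the resulting EQPs and EQP-valued functions across residue classes yields Properties (\ref{item_nonempty})--(\ref{item_coords}) globally.
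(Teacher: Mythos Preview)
Your proposal is correct and follows essentially the same strategy as the paper: split into the bounded case ($r=s=0$) and the unbounded case, invoke Chen--Li--Sam for counting and Shen for selecting a point in the bounded case, and in the unbounded case pass to the auxiliary polytope $Q_t$ of Lemma~\ref{auxiliaryQ}. The only minor differences are that for (\ref{item_coords}) you cite Theorem~\ref{integer_hulls} (integer hull vertices) where the paper instead uses Shen's linear-optimization result \cite[Corollary~3.2.2]{Shen15b} applied coordinate by coordinate, and you spell out the conversion of $Q_t$ from its vertex description to a $\Z[t]$-inequality description---a step the paper simply asserts by saying ``$Q$ is a parametric polytope so we can apply the previous case.''
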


\begin{proof}
Note that Woods \cite{woods1} shows that these follow from the EQP behavior of the generating function, which we discuss and prove in Section \ref{sec:pl} (at least for $S\subseteq\N^d$, a case we could reduce to). We therefore only present a brief proof here, to demonstrate that generating functions are unnecessary.

First consider the case that $r=s=0$; i.e. $S_t$ is the set of integer points in the parametric polytope 
 \[\conv\{\mathbf x_1(t),\ldots, \mathbf x_q(t)\}. \]
Then $S_t$ is finite for all $t$ so (\ref{item_finite}) is immediate. Furthermore, (\ref{item_EQP}) is the main result of Chen-Li-Sam \cite[Theorem 2.1]{ChLiSam}: the idea of the proof is that writing each variable ``base $t$''  (as in Step 3) reduces this to a problem in classic Ehrhart Theory  (see Example \ref{Ex:Ehrhart}). 

Next, (\ref{item_coords}) is effectively the main result of Shen \cite[Corollary 3.2.2]{Shen15b}, which proves that, for any $\vec c \in\Z^d$, the maximum value of $\vec c\cdot \vec x$ is EQP (in fact, it proves a slightly more general statement). To prove (\ref{item_coords}), we might demonstrate that an \emph{argmax} (an $\x$ maximizing this linear function $\vec c\cdot \vec x$) has EQP coordinates. While \cite{Shen15b} demonstrates only that the \emph{max} value of $\vec c\cdot \vec x$ is an EQP,  it is clear that the proof also works for the argmax. The key idea is that, after performing the ``base $t$'' reduction, such an $\x$ must be within a fixed distance to one of the facets of the parametric polytope \cite[Proposition 4.1]{Shen15b}; therefore, by searching all parallel hyperplanes within that fixed distance of a facet, we reduce the dimension of the problem inductively.

To instead derive (\ref{item_coords}) directly from the statement of Shen \cite[Corollary 3.2.2]{Shen15b}, we may do the following: First take, $c = e_1$ to get a maximum value for the $x_1$ coordinate, say $a_1$. Then, substitute the value $a_1$ in for $x_1$ into the definition of our polyhedron, reducing the dimension by one; we now have the cross-section of the original polyhedron at $x_1=a_1$. Then take $c=e_2$ to get the maximum in the $x_2$ coordinate, and repeat.

Finally, (\ref{item_nonempty}) follows from the other properties as proved in \cite{woods1}. For example, (\ref{item_EQP}) yields that $\abs{S_t}$ is an EQP, and the only way to have an infinite number of zeros in this EQP is for a constituent polynomial to be identically zero. Alternatively, (\ref{item_nonempty}) also follows from the proofs in \cite{Shen15b}.

Now suppose that either $r \geq 1$ or $s \geq 1$. Again let 
\[Q=\conv\{\mathbf x_1,\ldots, \mathbf x_q\}+\zono\{\mathbf y_1,\ldots,\mathbf y_r,\mathbf z_1,\ldots,\mathbf z_s\}.\]

By Lemma~\ref{auxiliaryQ}, $Q$ is a parametric polytope so we can apply the previous case to the set $S'_t := Q \cap \Z^d$. In particular, the set of $t$ for which $S'_t$ is nonempty is eventually periodic. But since (again by Lemma \ref{auxiliaryQ}) $S_t$ is finite iff $S_t$ is empty iff $S'_t$ is empty, (\ref{item_nonempty}), (\ref{item_finite}), and (\ref{item_EQP}) immediately follow. Also, (\ref{item_coords}) holds for $S'_t$ by the previous case. Since $Q \subseteq P$ and $S_t$ is empty iff $S'_t$ is empty, we see that (\ref{item_coords}) also holds for $S_t$ by simply choosing a point in $S'_t$.
\end{proof}

\section{Generating Functions}
\label{sec:pl}
Given a set $S\subseteq\N^d$, we define its generating function to be
\[\sum_{\vec s\in S}\x^{\vec s}=\sum_{\vec s\in S}x_1^{s_1}x_2^{s_2}\cdots x_d^{s_d}.\]
 These generating functions can often be simplified to a rational function.

\begin{example}
The generating function for $S=\N$ is
\[\sum_{s\in\N} x^s=1+x+x^2+\cdots=\frac{1}{1-x},\]
and the infinite sum is convergent for $\abs{x}<1$.
\end{example}

Now let $S_t$ be a parametric Presburger family. For a fixed $t\in\N$, $S_t$ has a generating function, and Property 4 examines what happens to this generating function as $t$ changes.

\begin{example}
\label{ex:tri_gf}For the triangle $P$ from Example \ref{ex:triangle} with vertices $(0,0)$, $\left(\frac{1}{2},0\right)$, and $\left(\frac{1}{2},\frac{1}{2}\right)$, we have the generating function
\begin{align*}
\sum_{\vec s\in tP\cap\Z^d} \x^{\vec s}&=\left(1+x_1+x_1^2+\cdots+x_1^{\floor{t/2}}\right)\\
&\phantom{mmm}+\left(x_1+x_1^2+\cdots+x_1^{\floor{t/2}}\right)x_2 +\cdots +\left(x_1^{\floor{t/2}}\right)x_2^{\floor{t/2}}\\
&=\frac{1}{(1-x_1)(1-x_1x_2)}-\frac{x_1^{\floor{t/2}+1}}{(1-x_1)(1-x_2)}+\frac{x_1^{\floor{t/2}+1}x_2^{\floor{t/2}+2}}{(1-x_2)(1-x_1x_2)}\\
&=\frac{1-x_2-(1-x_1x_2)x_1^{\floor{t/2}+1}+(1-x_1) x_1^{\floor{t/2}+1}x_2^{\floor{t/2}+2}}{(1-x_1)(1-x_2)(1-x_1x_2)}
\end{align*}
(the second equality can be verified directly by expanding the fractions as products of geometric series).
\end{example}

Note that the above example satisfies Property 4.

Generating functions are powerful tools for understanding the structure of $S_t$. For example, given the generating function, we can count the number of integer points in $S_t$ (if finite) by substituting in $\x=(1,\ldots,1)$.

\begin{example}
Substituting $x_1=x_2=1$ into the generating function for Example \ref{ex:tri_gf}, we see that $(1,1)$ is a zero of the numerator and denominator of this rational function. Fortunately, applying L'H\^opital's rule to find the limit as $x_1$ and $x_2$ approach 1 will work, and it is evident that the differentiation involved in L'H\^opital's rule will yield a quasi-polynomial in $t$ as the result; careful calculation will show that it matches Example \ref{ex:triangle}.
\end{example}

\begin{remark}\label{rmk:lines}
To prove Property 4 for parametric Presburger families, we will assume from now on that $S_t\subseteq\N^d$. We must check that the EQP-affine reductions in Steps 2, 3, and 5 above result in a family $S'_t$ which is still a subset of $\N^{d'}$ for some $d'$ (rather than simply a subset of $\Z^{d'}$), but this is immediate upon inspection. Steps 1 and 4 do not cause any problems either since they give logically equivalent formulas which define the same parametric Presburger family.
\end{remark}

While one may instead define generating functions for subsets of $\Z^d$, this may lose information about the sets:

\begin{example}
If $S=\Z$, then the generating function for $S$ is
\[\cdots+x^{-2}+x^{-1}+1+x^1+x^2+\cdots.\]
If you regard this as an infinite series, then it doesn't converge on any open neighborhood. If you regard it as a formal power series, then it simplifies to
\[(\cdots+x^{-2}+x^{-1})+(1+x^1+x^2+\cdots)=\frac{x^{-1}}{1-x^{-1}}+\frac{1}{1-x}=-\frac{1}{1-x}+\frac{1}{1-x}=0\]
which is also the generating function for the empty set.
\end{example}

In the statement of Property 4, we need $\vec b_{ij}(t)$ to be lexicographically positive rather than simply in $\N^d\setminus\{0\}$ for examples like the following:

\begin{example}
\label{ex:negs}
Let $S_t=\big\{(x,y)\in\N^2:\ x+y=t\big\}$. Then the generating function for $S_t$ is
\[y^{t}+xy^{t-1}+\cdots+x^{t}=\frac{y^{t}-x^{t+1}y^{-1}}{1-xy^{-1}}.\]
\end{example}

\begin{remark}\label{rmk:lexpos}
If $\vec b$ is lexicographically negative, then we may instead write
\[\frac{1}{1-\vec x^{\vec b}}=\frac{1}{1-\vec x^{\vec b}}\cdot\frac{-\vec x^{-\vec b}}{-\vec x^{-\vec b}}=\frac{-\vec x^{-\vec b}}{1-\vec x^{-\vec b}}\]
with $-\vec b$ lexicographically positive.
\end{remark}

\begin{remark}\label{rmk:lexpos2}
Having $\vec b$ lexicographically positive guarantees that $1/(1-\vec x^{\vec b})=1+\vec x^{\vec b}+\vec x^{2\vec b}+\cdots$ is the Laurent series convergent on a neighborhood of $\vec a \doteq (e^{-\varepsilon},e^{-\varepsilon^2},\ldots,e^{-\varepsilon^d})$, for sufficiently small $\varepsilon$, as follows: Let $b_i>0$ be the first nonzero coordinate of $\vec b$. Then
\[\ln\left(\vec a ^ {\vec b}\right)=-\varepsilon b_1-\varepsilon^2 b_2-\cdots -\varepsilon^d b_d=-\varepsilon^i b_i-\cdots-\varepsilon^d b_d.\]
For sufficiently small $\varepsilon$, the  $-\varepsilon^i b_i$ term dominates, and $\ln(\vec a ^ {\vec b})<0$. This implies that $\abs{\vec a ^ {\vec b}}<1$, and indeed $1+\vec x^{\vec b}+\vec x^{2\vec b}+\cdots$ does converge on a neighborhood of $\vec a$.
\end{remark}

Now we prove Property 4, for parametric Presburger families. The hardest work has already been done:
\begin{itemize}
\item At the end of Step 4, we had that $S_t$ was affine reducible to a set $S'_t$, described by a quantifier free formula in parametric Presburger arithmetic.
\item By \cite[Theorem 3.5(a)]{woods1}, any set described by a quantifier-free formula has Property 4.
\end{itemize}

The only remaining step is to check that Property 4 is preserved by affine reductions. First a short lemma:

\begin{lem} \label{gftransform}
  Let $S \subseteq \N^d$ and $S' \subseteq \N^e$ be sets such that there is a $\Z$-affine linear function $F: \Z^e \to \Z^d$ that restricts to a bijection from $S'$ to $S$. Let 
  $F(\x) = A \x + \mathbf{c}$, where $A=(a_{ij})$. Define monomials $m_i = w_1^{a_{1i}} \cdots w_d^{a_{di}}$ for $i = 1, \dots, e$.  Let $g(z_1, \ldots, z_e) = g(\z)$ be the generating function for $S'$. Then the generating function for $S$ is $\w^{\mathbf{c}} g(m_1, \ldots, m_e),$ where $\w := (w_1, \dots, w_d)$.
\end{lem}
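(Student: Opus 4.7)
The proof will be a direct change-of-variables calculation. First, I would reindex the generating function for $S$ using the bijection $F\colon S'\to S$: since every element of $S$ arises uniquely as $F(\mathbf{s}')$ for some $\mathbf{s}'\in S'$, we have
\[
\sum_{\mathbf{s}\in S}\w^{\mathbf{s}} \;=\; \sum_{\mathbf{s}'\in S'}\w^{F(\mathbf{s}')} \;=\; \sum_{\mathbf{s}'\in S'}\w^{A\mathbf{s}'+\mathbf{c}} \;=\; \w^{\mathbf{c}}\sum_{\mathbf{s}'\in S'}\w^{A\mathbf{s}'}.
\]
This reduces the problem to identifying the Laurent monomial $\w^{A\mathbf{s}'}$.

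Next I would compute $\w^{A\mathbf{s}'}$ by unpacking the matrix-vector product. Writing $(A\mathbf{s}')_j=\sum_i a_{ji}s'_i$ and then swapping the order of the resulting double product, we get
\[
\w^{A\mathbf{s}'}=\prod_{j=1}^d w_j^{\sum_i a_{ji}s'_i}=\prod_{i=1}^e\prod_{j=1}^d w_j^{a_{ji}s'_i}=\prod_{i=1}^e\bigl(w_1^{a_{1i}}\cdots w_d^{a_{di}}\bigr)^{s'_i}=\prod_{i=1}^e m_i^{s'_i}.
\]
Substituting this back and pulling the $\mathbf{s}'$-independent factor $\w^{\mathbf{c}}$ outside the sum gives $\w^{\mathbf{c}}\sum_{\mathbf{s}'\in S'}\prod_i m_i^{s'_i}=\w^{\mathbf{c}}\, g(m_1,\ldots,m_e)$, which is exactly the claimed formula.

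The main care needed is conceptual rather than technical. Since $A$ has entries in $\Z$ (not $\N$), the monomials $m_i$ may carry negative exponents, so the substitution $g(m_1,\ldots,m_e)$ should be read as an identity of Laurent series (or, for finite $S$, of Laurent polynomials). Convergence on a common domain is not at issue at this stage: when this lemma is later applied to transport Property 4 across the affine reductions of Steps 2, 3, and 5, the lexicographic positivity assumption on the $\mathbf{b}_{ij}(t)$ combined with Remarks \ref{rmk:lexpos} and \ref{rmk:lexpos2} ensures a common convergence neighborhood. I do not expect any genuine obstacle here, as this is essentially a bookkeeping identity that lets us push the rational generating function for $S'_t$ through the linear change of coordinates to recover the generating function for $S_t$.
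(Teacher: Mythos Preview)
Your proof is correct and follows essentially the same approach as the paper's own proof: reindex the sum over $S$ as a sum over $S'$ via the bijection $F$, factor out $\w^{\mathbf{c}}$, and then unpack $\w^{A\mathbf{s}'}$ into $\prod_i m_i^{s'_i}$ by regrouping the double product. Your additional remarks about Laurent monomials and convergence are not part of the paper's proof of this lemma but are reasonable context for its later use.
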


\begin{proof}
  Let $h(w_1, \dots, w_d) =: h(\w)$ be the generating function for $S$. Then
  \begin{align*}
    h(\w) & = & \sum_{\y \in S} \w^\y \\
    & = & \sum_{\x \in S'} \w^{A \x + \mathbf{c}} \\
    & = & \w^{\mathbf{c}} \sum_{\x \in S'} w_1^{a_{11}x_1 + \ldots + a_{1e}x_e} \cdots w_d^{a_{d1}x_1 + \ldots + a_{de}x_e} \\
    & = & \w^{\mathbf{c}} \sum_{\x \in S'} \left(w_1^{a_{11}} \cdots w_d^{a_{d1}}\right)^{x_1} \cdots \left(w_1^{a_{1e}} \cdots w_d^{a_{de}}\right)^{x_e} \\
      & = & \w^{\mathbf{c}} \sum_{\x \in S'} m_1^{x_1} \cdots m_e^{x_e} \\
      & = & \w^{\mathbf{c}} g(m_1, \ldots, m_e)
  \end{align*}
where the second equality follows because $F$ restricts to a bijection from $S'$ to $S$. 
\end{proof}

\begin{prop} \label{preserve4}
Suppose $S'_t \subseteq \N^{d'}$ is EQP-affine reducible to $S_t \subseteq \N^d$. If $S'_t$ has Property 4, then so does $S_t$.
\end{prop}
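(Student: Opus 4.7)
The plan is to apply Lemma~\ref{gftransform} to convert the Property 4 representation of $S'_t$ into one for $S_t$, after first refining the period so that all functions in sight are genuinely polynomial, and then restoring lexicographic positivity via Remark~\ref{rmk:lexpos}. Write the EQP-affine reduction as $F(\x,t)=A(t)\x+\mathbf{c}(t)$, let $m'$ be the period from the Property 4 representation of $S'_t$, and let $m''$ be a common period for the EQP entries of $A(t)$ and $\mathbf{c}(t)$. Setting $m=\mathrm{lcm}(m',m'')$, I would fix a residue class $i \bmod m$ and work with $t$ sufficiently large in this class, so that the entries of $A(t)$, $\mathbf{c}(t)$, and the vectors $\mathbf{q}_{ij}(t)$, $\mathbf{b}_{ik}(t)$ are all honest polynomials in $t$.

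Next, applying Lemma~\ref{gftransform} (with the monomials $m_k$ as defined there) and substituting the Property 4 formula for the generating function $g_t(\z)$ of $S'_t$ yields
\[\sum_{\y\in S_t}\w^{\y} \;=\; \w^{\mathbf{c}(t)}\,g_t(m_1,\ldots,m_{d'}) \;=\; \frac{\sum_{j=1}^{n_i}\alpha_{ij}\,\w^{\mathbf{c}(t)+A(t)\mathbf{q}_{ij}(t)}}{\prod_{k=1}^{k_i}\bigl(1-\w^{A(t)\mathbf{b}_{ik}(t)}\bigr)}.\]
The new numerator exponents $\mathbf{c}(t)+A(t)\mathbf{q}_{ij}(t)$ and denominator exponents $A(t)\mathbf{b}_{ik}(t)$ have polynomial coordinates on the residue class, matching the shape demanded by Property 4. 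To ensure lexicographic positivity of each $A(t)\mathbf{b}_{ik}(t)$, I would use that each coordinate is a polynomial in $t$, so for large $t$ the index of the first nonzero coordinate is fixed and the sign there is fixed; whenever that sign is negative, Remark~\ref{rmk:lexpos} lets me flip the factor by replacing $1/(1-\w^{\mathbf{v}})$ with $-\w^{-\mathbf{v}}/(1-\w^{-\mathbf{v}})$ and absorbing the extra monomial into each numerator term, keeping the numerator exponents polynomial.

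The main obstacle I anticipate is the degenerate case where $A(t)\mathbf{b}_{ik}(t)\equiv\mathbf{0}$ on some residue class, which would render the denominator factor $(1-\w^{\mathbf{0}})=0$ and invalidate the substitution. I would resolve this using the bijectivity of $F|_{S'_t}$: any $\mathbf{b}_{ik}$ that genuinely contributes infinitely many distinct points to $S'_t$ must map to a nonzero vector $A(t)\mathbf{b}_{ik}(t)$, since otherwise $F_t$ would identify infinitely many points of $S'_t$ and fail to be injective. Hence any such factor is only an artifact of the chosen rational presentation, and can be canceled against a matching factor of $(1-\z^{\mathbf{b}_{ik}(t)})$ in the numerator of $g_t$ via a straightforward rational-function simplification prior to substitution. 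Performing this cancellation is the most delicate bookkeeping step; once it is carried out, the sign-flipping from Remark~\ref{rmk:lexpos} finishes the proof.
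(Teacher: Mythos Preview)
Your argument follows the paper's proof almost exactly: pass to a single residue class so that the entries of $A(t)$, $\mathbf{c}(t)$, $\mathbf{q}_{ij}(t)$, and $\mathbf{b}_{ik}(t)$ are honest polynomials, apply Lemma~\ref{gftransform} to substitute the monomials $m_k$ into the Property~4 expression for $g_t$, observe that the resulting exponents $\mathbf{c}(t)+A(t)\mathbf{q}_{ij}(t)$ and $A(t)\mathbf{b}_{ik}(t)$ are again polynomial in $t$, and repair lexicographic positivity via Remark~\ref{rmk:lexpos}. The paper does precisely this and nothing more.

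Where you go beyond the paper is in flagging the degenerate case $A(t)\mathbf{b}_{ik}(t)\equiv\mathbf{0}$, which the paper's proof silently ignores. Your instinct to invoke injectivity of $F_t\upharpoonright S'_t$ is the right one --- it forces $S'_t$ to meet each coset of $\Z\mathbf{b}_{ik}(t)$ in at most one point --- but the step ``hence the factor cancels against a matching factor in the numerator'' is not justified by that observation alone. You have not shown that $\sum_j\alpha_{ij}\z^{\mathbf{q}_{ij}(t)}$ is actually divisible by $(1-\z^{\mathbf{b}_{ik}(t)})$ in the Laurent polynomial ring, and even when it is, the quotient need not have a bounded number of monomial terms with polynomial-in-$t$ exponents (e.g.\ $(1-\z^{t\mathbf{b}})/(1-\z^{\mathbf{b}})$ has $t$ terms), so the Property~4 shape could be lost. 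In short, the edge case you noticed is a genuine loose end shared by both your sketch and the paper's proof, and closing it is more delicate than ``straightforward rational-function simplification.''
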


\begin{proof}
  First consider the case that the affine reduction is defined by polynomials and that there is no periodicity in the generating function $g(\z, t)$ for $S'_t$. That is, $F(\x) = A(t) \x + \mathbf{c}(t)$ where $A(t) \in \Z[t]^{d \times e}$ and $\mathbf{c}(t) \in \Z[t]^d$, and
  \[ g(\z,t) := \sum_{\x\in S'_t}\z^\x = \frac{\sum_{j=1}^{n_i}\alpha_j \z^{\mathbf{q_j(t)}}}{(1-\z^{\mathbf{b_1(t)}}) \cdots (1-\z^{\mathbf{b_k}(t)})},\]
  where the coordinate functions of $\mathbf{q}_j$ and $\mathbf{b_j}$ are polynomials with the $\mathbf{b_j}$ lexicographically positive. 

  Fix notation as in Lemma \ref{gftransform} and additionally define $\mathbf{m}(t) = (m_1(t), \ldots m_e(t))$, where $m_i(t) = w_1^{a_{1i}(t)} \cdots w_d^{a_{di}(t)}$ and $w_1, \ldots, w_d$ are variables. Applying Lemma~\ref{gftransform} for each value of $t$, we see that the generating function $h(\w,t)$ of $S_t$ is given by
  \begin{align*} h(\w, t) & = & \w^{\mathbf{c}(t)} g(m_1(t), \dots, m_e(t)) \\
    & = & \w^{\mathbf{c}(t)} \frac{\sum_{j=1}^{n_i}\alpha_j \mathbf{m}(t)^{\mathbf{q_j(t)}}}{(1-\mathbf{m}(t)^{\mathbf{b_1(t)}}) \cdots (1-\mathbf{m}(t)^{\mathbf{b_k}(t)})}.  
  \end{align*}
  Writing $\mathbf{q_j(t)} = (q_j^1(t), \ldots, q_j^e(t))$, we have that
  \begin{align*}
    \mathbf{m}(t)^{q_j(t)} & = & \left(w_1^{a_{11}(t)} \cdots w_d^{a_{d1}(t)}  \right)^{q_j^1(t)} \cdots \left(w_1^{a_{1e}(t)} \cdots w_d^{a_{de}(t)}  \right)^{q_j^e(t)} \\
        & = & w_1^{a_{11}(t)q_j^1(t) + \ldots + a_{1e}(t)q_j^e(t)} \cdots w_d^{a_{d1}(t)q^1_j(t) + \ldots + a_{de}(t)q_j^e(t)} 
  \end{align*}
  so the exponents of $w_1, \ldots, w_d$ in the numerator are polynomial functions of $t$, and remain so after multiplying in the factor of $\w^\mathbf{c}(t)$ outside the sum. Similarly, the exponents of $w_1, \ldots, w_d$ in each factor in the denominator are polynomials in $t$. If any of these exponents are (eventually) lexicographically negative, apply Remark \ref{rmk:lexpos} to make them eventually lexicographically positive.
  
  In the general case, let $t_0$ be chosen sufficiently large so that the affine reduction is defined by quasi-polynomials for all $t > t_0$ and also the function $g$ has the desired form as a rational function for all $t > t_0$. Let $D$ be the least common multiple of all periods occurring in either the affine reduction or the generating function $g$. Then the generating function $h$ has the desired form with period $D$ for all $t > t_0$.  
\end{proof}



\bibliographystyle{amsplain}


\begin{dajauthors}
\begin{authorinfo}[tbog]
  Tristram Bogart\\
  Associate Professor\\
  Universidad de los Andes\\
  Bogot\'a, Colombia\\
  tc.bogart22\imageat{}uniandes\imagedot{}edu\imagedot{}co \\
  \url{http://wwwprof.uniandes.edu.co/~tc.bogart22/}
\end{authorinfo}
\begin{authorinfo}[jgoo]
  John Goodrick\\
  Associate Professor\\
  Universidad de los Andes\\
  Bogot\'a, Colombia\\
  jr.goodrick427\imageat{}uniandes\imagedot{}edu\imagedot{}co \\
  \url{http://matematicas.uniandes.edu.co/~goodrick/}
\end{authorinfo}
\begin{authorinfo}[kwoo]
  Kevin Woods\\
  Associate Professor\\
  Oberlin College\\
  Oberlin, Ohio, USA\\
  Kevin.Woods\imageat{}oberlin\imagedot{}edu\\
  \url{http://www.oberlin.edu/faculty/kwoods/}
\end{authorinfo}
\end{dajauthors}

\end{document}